\tikzset{node distance=2cm, auto}
\newtheorem{them}{Theorem}[section]
\newtheorem{lem}[them]{Lemma}
\newtheorem{prop}[them]{Proposition}
\newtheorem{coro}[them]{Corollary}
\theoremstyle{definition}
\newtheorem{defn}[them]{Definition}
\newtheorem*{acknowledgements}{Acknowledgements}
\newcommand{\en}{\operatorname{End}}
\newcommand{\aut}{\operatorname{Aut}}
\newcommand{\rank}{\operatorname{rank}}
\renewcommand{\ker}{\operatorname{ker}}
\newcommand{\im}{\operatorname{im}}
\newcommand{\ig}{\operatorname{IG}}
\newcommand{\dee}{\mbox{$\mathcal{D}$}}
\newcommand{\jj}{\mbox{$\mathcal{J}$}}
\newcommand{\eh}{\mbox{$\mathcal{H}$}}
\newcommand{\ar}{\mbox{$\mathcal{R}$}}
\newcommand{\el}{\mbox{$\mathcal{L}$}}
\newcommand{\e}{\varepsilon}
\renewcommand{\epsilon}{\varepsilon}
\renewcommand{\a}{\alpha}
\renewcommand{\b}{\beta}
\newcommand{\leqel}{\mbox{$\leq _{\mathcal L }$}}
\newcommand{\leqar}{\mbox{$\leq _{\mathcal R}$}}
\begin{document}

% TITLE

\title[Free idempotent generated semigroups]%
{Free idempotent generated semigroups and \\ endomorphism monoids of free $G$-acts}

% AUTHORS

\author{Igor Dolinka}
\address{Department of Mathematics and Informatics, University of Novi Sad, Trg Dositeja Obra\-do\-vi\-\'{c}a 4,
21101 Novi Sad, Serbia} \email{dockie@dmi.uns.ac.rs}

\author{Victoria Gould}
\author{Dandan Yang}
\address{Department of Mathematics,University of York, Heslington, York YO10 5DD, UK}
\email{victoria.gould@york.ac.uk} \email{ddy501@york.ac.uk}

% DEDICATION

\dedicatory{Dedicated to the memory of Professor David Rees (1918--2013)}

% SUBJECT CLASSIFICATION

\subjclass[2010]{Primary 20M05; Secondary  20F05, 20M30}

% THANKS

\thanks{Research supported by Grant No.\ EP/I032312/1 of EPSRC, Grant No.\ 174019 of the Ministry of Education, Science,
and Technological Development of the Republic of Serbia, and by Grant No.\ 3606/2013 of the Secretariat of Science and
Technological Development of the Autonomous Province of Vojvodina.}

% KEYWORDS

\keywords{$G$-act, idempotent, biordered set, wreath product}

% ABSTRACT

\begin{abstract}
The study of the free idempotent generated semigroup $\ig(E)$ over a biordered set $E$ began with the seminal work of
Nambooripad in the 1970s and has seen a recent revival with a number of new approaches, both geometric and
combinatorial.  Here we study $\ig(E)$ in the case $E$ is the  biordered set
 of a wreath product $G\wr \mathcal{T}_n$, where $G$ is a group and $\mathcal{T}_n$ is the full transformation monoid on $n$ elements. This wreath product is isomorphic to the endomorphism monoid of the free
$G$-act $\en F_n(G)$ on $n$ generators, and this provides us with a convenient approach.

We say that the {\em rank} of an element of $\en F_n(G)$ is the minimal number of (free) generators in its image.  Let
$\varepsilon=\varepsilon^2\in \en F_n(G).$ For rather straightforward reasons it is known that if $\rank \epsilon =n-1$
(respectively, $n$), then the maximal subgroup of $\ig(E)$ containing $\varepsilon$ is free (respectively, trivial). We
show that if $\rank \epsilon =r$ where $1\leq r\leq n-2$, then the maximal subgroup of $\ig(E)$ containing
$\varepsilon$ is isomorphic to that in $\en F_n(G)$ and hence to $G\wr \mathcal{S}_r$, where $\mathcal{S}_r$ is the
symmetric group on $r$ elements. We have previously shown this result in the case $ r=1$; however, for higher rank, a
more sophisticated approach is needed. Our current proof subsumes the case $r=1$ and thus provides another approach to showing that {\em any} group occurs as the maximal subgroup of some $\ig(E)$. On the other hand, varying $r$ again and  taking $G$ to be trivial, we obtain an alternative proof of the recent result
of Gray and Ru\v{s}kuc for the biordered set of idempotents of $\mathcal{T}_n.$
\end{abstract}

% THE ARTICLE

\maketitle

\vspace{-3mm}

\section{Introduction}\label{sec:intro}

Let $S$ be a semigroup and denote by $\langle E\rangle$ the subsemigroup of $S$ generated by the set of idempotents
$E=E(S)$ of $S$. If $S=\langle E\rangle$, then we say that $S$ is {\em idempotent generated}. The significance of such
semigroups was evident at an early stage: in 1966 Howie \cite{howie:1966} showed that every semigroup may be embedded
into one that is idempotent generated. To do so, he investigated the idempotent generated subsemigroups of
transformation monoids, showing in particular that for the full transformation monoid $\mathcal{T}_n$ on $n$ generators
(where $n$ is finite), the subsemigroup of singular transformations is idempotent generated. Erdos \cite{erdos:1967}
proved a corresponding `linearised' result, showing that the multiplicative semigroup of singular square matrices over
a field is idempotent generated (see also \cite{La}).  Fountain and Lewin \cite{fountainandlewin} subsumed these
results into the wider context of endomorphism monoids of independence algebras. We note here that sets and vector
spaces over division rings are examples of independence algebras, as are free (left) $G$-acts over a group $G$.

For any set of idempotents $E=E(S)$ there is a free object IG$(E)$ in the category of semigroups that are generated by $E$, given by the presentation
\[\ig(E)=\langle\, \overline{E}:  \bar{e}\bar{f}=\overline{ef},\, e,f\in E, \{ e,f\}
\cap \{ ef,fe\}\neq \varnothing\,\rangle,\] where here $\overline{E}=\{ \bar{e}:e\in E\}$.
%\footnote{ It is more usual to identify elements of$E$ with those of $\overline{E}$, but it helps the clarity of our later arguments to make this distinction.}
We say that $\ig(E)$ is the {\em free idempotent generated semigroup over $E$}.  The relations in the presentation for
$\ig(E)$ correspond to taking {\em basic products} in $E$, that is, products between $e,f\in E$ where $e$ and $f$ are
comparable under one of the quasi-orders $\leqel$ or $\leqar$ defined on $S$. In fact, $E$ has an abstract
characterisation as a {\em biordered set}, that is, a partial algebra equipped with two quasi-orders satisfying certain
axioms. Biordered sets were introduced in 1979 by Nambooripad \cite{nambooripad:1979} in his seminal work on the
structure of regular semigroups, as was the notion of free idempotent generated semigroups $\ig(E)$. A celebrated
result of Easdown \cite{easdown:1985} shows every biordered set $E$ occurs as $E(S)$ for some semigroup $S$, hence we
lose nothing by assuming that our set of idempotents is of the form $E(S)$ for a semigroup $S$.

For any semigroup $S$ and any idempotent $e\in E(S)$, there is a maximal sub{\em group} of $S$ (that is, a subsemigroup
that is a group) having identity $e$; standard semigroup theory, briefly outlined in Section~\ref{sec:enF}, tells us
that this group is the equivalence class of $e$ under Green's relation $\eh$, usually denoted by $H_e$. The study of
maximal subgroups of $\ig(E)$ has a somewhat curious history. It was thought from the 1970s that all such groups would
be free (see, for example, \cite{mcelwee:2002,NP,Past}), but this conjecture was false.  The first published example of
a non-free group arising in this context  appeared in 2009 \cite{brittenham:2009}; an unpublished example of McElwee
from the earlier part of that decade was announced by Easdown in 2011 \cite{easdown:2011}. Also, the paper \cite{brittenham:2009} exhibited a strong relationship between maximal subgroups of $\ig(E)$ and algebraic topology: namely, it was shown that these groups are precisely fundamental groups of a  complex naturally arising from $S$ (called the {\it Graham-Houghton} complex of $S$). The 2012 paper of  Gray and
Ru\v{s}kuc \cite{gray:2012} showed that {\em any} group occurs as a maximal subgroup of some $\ig(E)$. Their approach
is to use existing machinery which affords presentations of maximal subgroups of semigroups, itself  developed by
Ru\v{s}kuc \cite{Ru}, refine this to give presentations of $\ig(E)$, and then, given a group $G$, to carefully choose a
biordered set $E$. Their techniques are significant and powerful, and have other consequences in \cite{gray:2012};  we
use their presentation in this article. However, to show that {\em any} group occurs as a maximal subgroup of $\ig(E)$,
a simple approach suffices \cite{gouldyang:2013}. We also note here that
 any group occurs as $\ig(E)$ for some {\em band}, that is, a semigroup of idempotents \cite{dolinkaRuskuc:2013}.

The approach of \cite{gouldyang:2013} is to consider the biordered set $E$ of non-identity idempotents of
a wreath product $G\wr \mathcal{T}_n$ or, equivalently, of the endomorphism monoid $\en F_n(G)$ of a free (left) $G$-act on
$n$ generators $\{ x_1,\hdots, x_n\}$ (see, for example, \cite[Theorem 6.8]{kkm:2000}). It is known that for a rank $r$ idempotent $\epsilon\in \en F_n(G)$ we have
$H_{\epsilon}\cong G\wr \mathcal{S}_r$. For a rank 1 idempotent $\epsilon\in E$,  the maximal subgroup
$H_{\overline{\epsilon}}$ is isomorphic to $H_{\epsilon}$ and hence to $G$ \cite{gouldyang:2013}. This followed a pattern established in  \cite{brittenham:2011} and \cite{gray:2012a} showing (respectively) that the multiplicative group of non-zero elements of any division ring $Q$ occurs as a maximal subgroup of a rank 1 idempotent in $\ig(E)$, where $E$ is the biordered set of idempotents of
$M_n(Q)$ for $n\geq 3$, and that any $\mathcal{S}_r$ occurs as a maximal subgroup of a rank $r$ idempotent in $\ig(F)$, where $F$ is the biordered set of idempotents of
a full transformation monoid
$\mathcal{T}_n$ for some $n\geq r+2$.  Another way of saying this is that  in both these cases, $H_{\bar{e}}\cong
H_e$ for the idempotent in question.

The aim of this current article is to extend the results of both \cite{gouldyang:2013} and \cite{gray:2012a} to  show that
for a rank $r$ idempotent $\epsilon\in \en F_n(G)$, with $1\leq r\leq n-2$, we have that $H_{\overline{\epsilon}}$ is
isomorphic to $H_{\epsilon}$ and hence to $G\wr\mathcal{S}_r$. We proceed  as follows. In Section~\ref{sec:enF} we
recall some basics of Green's relations on semigroups, and specific details concerning the structure of $\en F_n(G)$.
In Section~\ref{sec:pres} we show how to use the generic presentation for maximal subgroups given in \cite{gray:2012}
(restated here as Theorem 3.3) to obtain a presentation of $H_{\overline{\epsilon}}$; once these technicalities are in
place we sketch the strategy employed in the rest of the paper, and work our way through this in subsequent sections.
By the end of Section~\ref{sec:restricted} we are able to show that for $1\leq r\leq n/3$,
$H_{\overline{\epsilon}}\cong H_{\epsilon}$ (Theorem~\ref{thm:n/3}), a result corresponding to that in
\cite{dolinka:2012} for full linear monoids. To proceed further, we need more sophisticated analysis of the generators
of $H_{\overline{\epsilon}}$. Finally, in Section~\ref{sec:main}, we make use of the presentation of $G\wr
\mathcal{S}_r$ given in \cite{Lavers:1998} to show that we have the required result, namely that
$H_{\overline{\epsilon}}\cong H_{\epsilon}$, for any rank $r$ with $1\leq r\leq n-2$  (Theorem~\ref{thm:n-2}). It is
worth remarking that if $G$ is trivial, then $F_n(G)$ is essentially a set, so that $\en F_n(G)\cong \mathcal{T}_n$. We
are  therefore  able to recover, via  a rather different strategy, the main result of \cite{gray:2012a}.

\section{Preliminaries: Green's relations, and endomorphism monoids of free $G$-acts}\label{sec:enF}

In the course of studying the general structural features of semigroups, amongst the most basic tools are the five
equivalence relations that capture the ideal structure of a given semigroup $S$, called {\em Green's relations}. We
define for $a,b\in S$:
$$
a\;\ar\; b \Leftrightarrow aS^1=bS^1, \quad a\;\el\; b \Leftrightarrow S^1a=S^1b,\quad a\;\jj\; b \Leftrightarrow
S^1aS^1=S^1bS^1,
$$
where $S^1$ denotes $S$ with an identity element adjoined (unless $S$ already has one); hence, these three relations
record when two elements of $S$ generate the same right, left, and two-sided principal ideals, respectively.
Furthermore, we let $\eh=\ar\cap\el$, while $\dee=\ar\circ\el=\el\circ\ar$ is the join of the equivalences $\ar$ and
$\el$. As is well known, for finite semigroups we always have $\dee=\jj$, while in general the inclusions
$\eh\subseteq\ar,\el\subseteq\dee\subseteq\jj$ hold. The $\ar$-class of $a$ is usually denoted by $R_a$, and in a
similar fashion we use the notation $L_a,J_a,H_a$ and $D_a$.

It is also well known that a single $\dee$-class consists either entirely of regular elements, or of non-regular ones,
see \cite[Proposition 2.3.1]{howie:1995}. If $a\in S$ is regular, that is, $a=aba$ for some $b\in S$, then, for any
such $b$, it is clear that $ab, ba\in E(S)$ and $ab~\mathcal{R}~a~\mathcal{L}~ba.$ Therefore, regular $\dee$-classes
are precisely those containing idempotents, and for each idempotent $e$, the $\eh$-class $H_e$ is a group with identity
$e$. In fact, this is a maximal subgroup of the semigroup under consideration and all maximal subgroups arise in this
way.

There are natural orders on the set of $\ar$- and $\el$-classes of $S$, respectively, defined by $R_a\leq R_b$ if and
only if $aS^1\subseteq bS^1$, and $L_a\leq L_b$ if and only if $S^1a\subseteq S^1b$. In turn, these orders induce
quasi-orders $\leqar$ and $\leqel$ on $S$ (mentioned in the introduction), given by $a\;\leqar\; b$ if and only if
$R_a\leq R_b$, and $a\;\leqel\; b$ if and only if $L_a\leq L_b$. Further details of Green's relations and other
standard semigroup techniques may be found in \cite{howie:1995}.

Let $S$ be a semigroup with $E=E(S)$. The semigroup $\ig(E)$ defined in the introduction has some pleasant properties,
particularly with respect to Green's relations. It follows from the definition that the natural map
$\boldsymbol{\phi}:\ig(E)\rightarrow S$, given by $\bar{e}\boldsymbol{\phi}= e$, is a morphism onto $S'=\langle
E\rangle$. Since any morphism preserves $\el$-classes and $\ar$-classes, certainly so does $\boldsymbol{\phi}$. In
fact, the structure of the regular $\dee$-classes of $\ig(E)$ is closely related to that in $S$, as the following
result, taken from \cite{fitzgerald:1972,nambooripad:1979,easdown:1985,brittenham:2011,gray:2012}, illustrates.

\begin{prop}\label{prop:remarkable} Let $S,S',E=E(S), \ig(E)$ and $\boldsymbol{\phi}$ be as
above, and let $e\in E$.
\begin{enumerate}\item[(i)] The restriction of $\boldsymbol{\phi}$ to the set of idempotents of $\ig(E)$ is a bijection
onto E (and an isomorphism of biordered sets).
\item[(ii)] The morphism $\boldsymbol{\phi}$  induces a bijection between the set of
all $\ar$-classes (respectively $\el$-classes) in the $\mathcal{D}$-class of $\bar{e}$ in $\ig(E)$ and the
corresponding set in $\langle E\rangle$.
\item[(iii)] The restriction of $\boldsymbol{\phi}$ to $H_{\bar{e}}$ is a morphism onto $H_e$.
\end{enumerate}
\end{prop}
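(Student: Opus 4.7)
The plan is to exploit throughout that $\boldsymbol{\phi}$ is a surjective semigroup homomorphism and that any such morphism preserves $\ar$ and $\el$ in the sense that $a\;\ar\; b$ in $\ig(E)$ implies $a\boldsymbol{\phi}\;\ar\; b\boldsymbol{\phi}$ in $\langle E\rangle$, and likewise for $\el$ (hence for $\eh$ and $\dee$ as well).

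For (i), the restriction of $\boldsymbol{\phi}$ to $\overline{E}$ is surjective onto $E$ by construction, and injective since $\bar{e}\boldsymbol{\phi}=\bar{f}\boldsymbol{\phi}$ forces $e=f$. The substantive content is that every idempotent of $\ig(E)$ already lies in $\overline{E}$: this is the classical fact, secured by the axioms of a biordered set, that the basic-product relations cannot manufacture new idempotents from products of generators. That $\boldsymbol{\phi}|_{\overline{E}}$ is an isomorphism of biordered sets then follows because the defining relations of $\ig(E)$ realise exactly the basic products of $E$, and $\boldsymbol{\phi}$ matches them up in both directions.

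For (ii), the assignments $R_{\bar w}\mapsto R_w$ and $L_{\bar w}\mapsto L_w$ are well-defined and injective by preservation of $\ar$ and $\el$. For surjectivity, given an $\ar$-class in $D_e\cap\langle E\rangle$, regularity supplies an idempotent $f$ in it; the key step is to show $\bar f\in D_{\bar e}$ in $\ig(E)$. I would do this by picking a chain of idempotents $e=e_0,e_1,\ldots,e_k=f$ in $D_e$ that are alternately $\el$- and $\ar$-related in $\langle E\rangle$ (available since $D_e$ is a regular $\dee$-class), and arguing inductively that adjacent $\overline{e_i}$'s are $\el$- or $\ar$-related in $\ig(E)$: for instance, if $e_i\;\ar\;e_{i+1}$ then $e_ie_{i+1}=e_{i+1}$ and $e_{i+1}e_i=e_i$ are basic products, so $\overline{e_i}\,\overline{e_{i+1}}=\overline{e_{i+1}}$ and $\overline{e_{i+1}}\,\overline{e_i}=\overline{e_i}$ in $\ig(E)$, whence $\overline{e_i}\;\ar\;\overline{e_{i+1}}$. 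The resulting $\bar f$ then lies in $D_{\bar e}$ and hits the target $\ar$-class. Part (iii) reduces to (ii): the restriction to $H_{\bar e}$ is a morphism landing in $H_e$ by preservation of $\eh$, and for surjectivity, given $h\in H_e\subseteq\langle E\rangle$, express $h=e_1\cdots e_k$ with $e_i\in E$ and set $\bar h=\overline{e_1}\cdots\overline{e_k}$; then $h\;\ar\;e$ and $h\;\el\;e$ together with the bijections from (ii) pin down the $\eh$-class of $\bar h$ as $H_{\bar e}$.

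The principal obstacle is the non-trivial half of (i), namely the assertion that no spurious idempotents arise in $\ig(E)$ beyond $\overline{E}$; this is the deepest point in the proposition, and I would simply cite \cite{fitzgerald:1972,nambooripad:1979,easdown:1985} rather than reconstruct it. Once this is in hand, (ii) and (iii) fall out from the crucial elementary observation above, that $\ar$- or $\el$-related idempotents in $\langle E\rangle$ always lift to $\ar$- or $\el$-related elements of $\ig(E)$ through basic products.
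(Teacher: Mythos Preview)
The paper does not give its own proof of this proposition; it is stated as a known result with attribution to \cite{fitzgerald:1972,nambooripad:1979,easdown:1985,brittenham:2011,gray:2012}. So there is nothing in the paper to compare your argument against directly, and your decision to cite the literature for the hard half of (i) matches what the paper itself does.

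That said, your sketch has two gaps worth flagging.

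First, in (ii) you claim injectivity ``by preservation of $\ar$ and $\el$''. Preservation gives well-definedness (if $\bar w\,\ar\,\bar w'$ in $\ig(E)$ then $w\,\ar\,w'$ in $\langle E\rangle$), but injectivity is the converse implication and does not follow from preservation alone. The fix is to use the very lifting trick you employ for surjectivity: since $D_{\bar e}$ is regular, every $\ar$-class in it contains an idempotent $\bar f$ by (i); if $\bar f,\bar g$ map to $\ar$-related idempotents $f,g$ in $\langle E\rangle$, then $fg=g$ and $gf=f$ are basic products, so $\bar f\,\bar g=\bar g$ and $\bar g\,\bar f=\bar f$ in $\ig(E)$, giving $\bar f\,\ar\,\bar g$.

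Second, and more seriously, your argument for surjectivity in (iii) does not go through as written. You form $\bar h=\overline{e_1}\cdots\overline{e_k}$ and then appeal to the bijections of (ii) to conclude $\bar h\in H_{\bar e}$. But the bijection in (ii) has domain the set of $\ar$-classes \emph{inside} $D_{\bar e}$; to invoke it you must already know $\bar h\in D_{\bar e}$, and an arbitrary lift of a product of idempotents has no reason to land there. The substantive content of (iii), as established in \cite{fitzgerald:1972} and made explicit for $\ig(E)$ in \cite{brittenham:2011,gray:2012}, is that every $h\in H_e$ admits a factorisation $h=e_1\cdots e_k$ with all $e_i\in D_e$ and consecutive factors $\ar$- or $\el$-related; lifting such a product term by term, your basic-product observation keeps each partial product inside $D_{\bar e}$, and only then does the conclusion follow. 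Part (iii) therefore does not reduce to (ii) quite as cleanly as you suggest.
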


We now turn our attention to $ F_n(G)$ and the structure of its endomorphism monoid. The following notational convention will be useful: for any $u,v\in\mathbb{N}$ with $u\leq v$ we will denote $\{ u,u+1,\cdots, v-1,v\}$ and
$\{ u+1,\cdots, v-1\}$ by $[u,v]$ and $(u,v)$, respectively.

Let $G$ be a group, $n\in\mathbb{N},n\geq 3$, and let $F_n(G)=\bigcup_{i=1}^n Gx_{i}$ be a
rank $n$ free left $G$-act. We recall that, as a set,
$F_n(G)$ consists of the set of formal symbols $\{ gx_i:g\in G, i\in [1,n]\}$,
and we identify $x_i$ with $1x_i$, where $1$ is the identity of $G$. For any $g,h\in G$ and $1\leq i,j\leq n$ we have
that $gx_i=hx_j$ if and only if $g=h$ and $i=j$; the action of $G$ is given by
$g(hx_i)=(gh)x_i$. Let End $F_n(G)$ denote the endomorphism monoid of $F_n(G)$ (with composition left-to-right). The image of
$\alpha\in\en F_n(G) $ being a (free) $G$-subact, we can define the
{\em rank} of $\alpha$ to be the rank of $\im \alpha$.

Since $F_n(G)$ is an independence algebra, a direct application of Corollary 4.6 \cite{gould:1995} gives  a useful characterisation of Green's relations on End $F_n(G)$.

\begin{lem}\cite{gould:1995} \label{lem:green}
For any $\a, \b\in \en F_n(G)$, we have the following:

\begin{enumerate}\item[(i)] $\im \a =\im \b$ if and only if $\a \,\el\, \b$;

\item[(ii)] $\ker \a=\ker \b$ if and only if $\a\,\ar\, \b$;

\item[(iii)] $\rank \a= \rank \b$ if and only if $\a\,\dee\, \b$ if and only if $\a\,\mathcal{J}\, \b$.
\end{enumerate}
\end{lem}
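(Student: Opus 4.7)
The plan is to verify each part directly from the free left $G$-act structure of $F_n(G)$, in the spirit of the general independence algebra argument of \cite{gould:1995}. Since $F_n(G)$ is freely generated by $\{x_1,\ldots,x_n\}$, any endomorphism is uniquely determined by its values on these generators, and images of endomorphisms are themselves free $G$-subacts whose rank equals the rank of the source modulo the kernel congruence.

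For (i), the forward direction is immediate: if $\alpha\,\el\,\beta$ then $\alpha=\gamma\beta$ for some $\gamma\in (\en F_n(G))^1$, so $\im\alpha\subseteq\im\beta$, and symmetry gives equality. Conversely, assume $\im\alpha=\im\beta$. For each $i\in[1,n]$ pick $g_i\in G$ and $k_i\in[1,n]$ with $x_i\alpha=g_i(x_{k_i}\beta)$, and define $\gamma\in\en F_n(G)$ by $x_i\gamma=g_ix_{k_i}$. Then $x_i(\gamma\beta)=g_i(x_{k_i}\beta)=x_i\alpha$, so $\gamma\beta=\alpha$; a symmetric choice produces $\delta$ with $\delta\alpha=\beta$.

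For (ii), if $\alpha=\beta\gamma$ then certainly $\ker\beta\subseteq\ker\alpha$, and symmetry gives the forward implication. For the converse, suppose $\ker\alpha=\ker\beta$. Writing $x_i\beta=h_ix_{l_i}$ uniquely and aiming for $\beta\gamma=\alpha$, one is forced to set $x_{l_i}\gamma=h_i^{-1}(x_i\alpha)$. The main obstacle here is well-definedness, since distinct indices $i,j$ may satisfy $l_i=l_j$; however, a short verification shows $(h_jh_i^{-1}x_i)\beta=x_j\beta$ in that case, so $(h_jh_i^{-1}x_i,\,x_j)\in\ker\beta=\ker\alpha$, which forces $h_i^{-1}(x_i\alpha)=h_j^{-1}(x_j\alpha)$ and makes the two candidate values agree. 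On indices $k\notin\{l_i:i\in[1,n]\}$ we set $x_k\gamma$ arbitrarily; then $\beta\gamma=\alpha$, and symmetry completes the argument.

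For (iii), the inclusion $\dee\subseteq\mathcal{J}$ is automatic. If $\alpha\,\mathcal{J}\,\beta$ then $\alpha=\mu\beta\nu$ for some $\mu,\nu$, whence $\im\alpha\subseteq\nu(\im\beta)$; because the morphic image of a free $G$-act of rank $r$ has rank at most $r$, we obtain $\rank\alpha\leq\rank\beta$, with equality by symmetry. Conversely, if $\rank\alpha=\rank\beta=r$, then both $F_n(G)/\ker\alpha\cong\im\alpha$ and $\im\beta$ are free $G$-acts of rank $r$; composing the natural projection $F_n(G)\to F_n(G)/\ker\alpha$ with any $G$-isomorphism of this quotient onto $\im\beta$ produces $\gamma\in\en F_n(G)$ with $\ker\gamma=\ker\alpha$ and $\im\gamma=\im\beta$. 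Parts (i) and (ii) then give $\alpha\,\ar\,\gamma\,\el\,\beta$, so $\alpha\,\dee\,\beta$, as required.
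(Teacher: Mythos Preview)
Your argument is correct. One small notational slip in part~(iii): since the paper composes maps left-to-right, the containment should read $\im\alpha\subseteq(\im\beta)\nu$ rather than $\nu(\im\beta)$; but the substance is unaffected.

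As for comparison with the paper: the paper does not actually prove this lemma at all. It simply observes that $F_n(G)$ is an independence algebra and invokes Corollary~4.6 of \cite{gould:1995}, where the characterisation of Green's relations is established once and for all for endomorphism monoids of arbitrary independence algebras. Your approach is a direct, hands-on verification tailored to the free $G$-act case, exploiting freeness explicitly (choosing preimages generator by generator, checking well-definedness via the kernel hypothesis, and using the first isomorphism theorem for acts). What the citation buys is brevity and the placement of the result in its natural general setting; what your argument buys is self-containment, since a reader need not consult the independence-algebra machinery to see why the lemma holds here.
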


 Each
$\a\in \en F_n(G)$ depends only on its
 action on the free generators $\{ x_i:i\in [1,n]\}$
and it is therefore convenient to write

\[x_{j}\alpha=w_{j}^{\alpha}x_{j\overline{\alpha}}\]
for $j\in [1,n]$.
This determines a function $\overline{\alpha}: [1,n]\longrightarrow [1,n]$ and an
element $\alpha_G=(w_1^\alpha,\hdots,w_n^{\alpha})\in G^n$.
It will frequently be convenient to express $\alpha$ as above as
\[\alpha=\begin{pmatrix}x_1&x_2&\hdots &x_n\\
w_{1}^{\alpha}x_{1\overline{\alpha}}&w_{2}^{\alpha}x_{2\overline{\alpha}}&\hdots&w_{n}^{\alpha}x_{n\overline{\alpha}}\end{pmatrix}.\]

\begin{them} \label{them:wreath}\cite{skornjakov:1979,kkm:2000} The function
\[\boldsymbol{\psi}: \en F_n(G)\mapsto G\wr T_n,\,\,\alpha\mapsto ( \alpha_G,\overline{\alpha})\]
is an isomorphism.
\end{them}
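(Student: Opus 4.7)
The plan is to verify that $\boldsymbol{\psi}$ is well-defined, a semigroup homomorphism, injective, and surjective, by straightforward computation using the description of $\alpha$ via $(\alpha_G, \overline{\alpha})$. First, I would fix the convention for $G \wr \mathcal{T}_n = G^n \rtimes \mathcal{T}_n$, with the product
\[
(f,\sigma)(g,\tau) = \bigl( (f(j)\,g(j\sigma))_{j=1}^n,\; \sigma\tau \bigr),
\]
which is the one consistent with writing functions on the right, as the paper does. Well-definedness is immediate: once $\alpha$ is specified on the free generators by $x_j\alpha = w_j^\alpha x_{j\overline{\alpha}}$, the pair $(\alpha_G, \overline{\alpha})$ is uniquely determined, and every $\alpha \in \en F_n(G)$ is in turn uniquely determined by its effect on $\{x_1, \ldots, x_n\}$ since $F_n(G)$ is free on this set.

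Next I would check that $\boldsymbol{\psi}$ is a homomorphism. For $\alpha, \beta \in \en F_n(G)$ and $j \in [1,n]$, compute
\[
x_j(\alpha\beta) = (x_j\alpha)\beta = (w_j^\alpha x_{j\overline{\alpha}})\beta
   = w_j^\alpha(x_{j\overline{\alpha}}\beta) = w_j^\alpha w_{j\overline{\alpha}}^\beta\, x_{j\overline{\alpha}\,\overline{\beta}},
\]
using $G$-equivariance in the third equality. This gives $w_j^{\alpha\beta} = w_j^\alpha w_{j\overline{\alpha}}^\beta$ and $\overline{\alpha\beta} = \overline{\alpha}\,\overline{\beta}$, which are exactly the two coordinates of $(\alpha_G,\overline{\alpha})(\beta_G,\overline{\beta})$ under the wreath product multiplication above. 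Thus $(\alpha\beta)\boldsymbol{\psi} = \alpha\boldsymbol{\psi}\cdot\beta\boldsymbol{\psi}$.

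For injectivity, if $\alpha\boldsymbol{\psi} = \beta\boldsymbol{\psi}$ then $w_j^\alpha = w_j^\beta$ and $j\overline{\alpha} = j\overline{\beta}$ for all $j$, hence $x_j\alpha = x_j\beta$ for all $j$, so $\alpha = \beta$ by freeness. For surjectivity, given any $((g_1,\ldots,g_n),\tau) \in G^n \times \mathcal{T}_n$, define $\alpha \in \en F_n(G)$ by setting $x_j\alpha = g_j x_{j\tau}$ and extending $G$-equivariantly by $(hx_j)\alpha = hg_j x_{j\tau}$; this is well defined and an endomorphism by the freeness of $F_n(G)$, and clearly $\alpha\boldsymbol{\psi} = ((g_1,\ldots,g_n),\tau)$.

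No real obstacle arises: the only point requiring care is pinning down the convention for multiplication in $G \wr \mathcal{T}_n$ so that it matches left-to-right composition in $\en F_n(G)$; with the right convention in place, the verification reduces to the single line computation of $x_j(\alpha\beta)$ displayed above.
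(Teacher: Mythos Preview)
Your proof is correct and is the standard direct verification. Note, however, that the paper does not actually prove this theorem: it is stated with citations to \cite{skornjakov:1979,kkm:2000} and used as a known result, so there is no proof in the paper to compare against. Your argument is essentially what one would find in those references.
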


 Let $1\leq r\leq n$ and set $D_{r}=\{\alpha\in \en F_{n}(G)~|~ \mbox{rank} ~\alpha=r\}$, that is, $D_r$ is the $\dee$-class in $\en F_n(G)$ of any rank $r$ element.  We let $I$ and $\Lambda$  denote  the set of $\mathcal{R}$-classes and the set of $\mathcal{L}$-classes of $D_r$, respectively. Thus, $I$ is in bijective correspondence with  the set of kernels,
and $\Lambda$ with the set of images, of rank $r$ endomorphisms, respectively.  It is convenient to assume $I$ {\em is} the set of kernels of rank $r$ endomorphisms, and that
\[\Lambda=\{ (u_1,u_2,\hdots, u_r): 1\leq u_1<u_2<\hdots <u_r\leq n\} \subseteq [1,n]^r.\]
Thus  $\alpha\in R_i$ if and only if $\ker\alpha=i$ and
 $\alpha\in L_{(u_1,\hdots,u_r)}$ if and only if \[\im\alpha=Gx_{u_1}\cup Gx_{u_2}\cup\hdots
\cup Gx_{u_r}.\]

For every $i\in I$ and $\lambda\in \Lambda$, we put $H_{i\lambda}=R_{i}~\cap~L_{\lambda}$ so that $H_{i\lambda}$ is an $\mathcal{H}$-class of $D_r$. Where $H_{i \lambda}$ is a subgroup, we denote its identity by $\varepsilon_{i \lambda}$. It is notationally standard to use the same symbol $1$ to denote a selected element from both $I$ and $\Lambda$. Here we let
 $1= \langle (x_{1},x_{i}): r+1\leq i\leq n\rangle\in I$, that is, the congruence generated by $\{(x_{1},x_{i}): r+1\leq i\leq n\}$,  and $1=(1,2,\hdots, r)\in\Lambda$. Then  $H=H_{11}$ is a group $\mathcal{H}$-class in $D_{r}$, with identity $\varepsilon_{11}$.

 A typical element of $H$ looks like
\[\alpha=\begin{pmatrix} x_1&x_2&\hdots &x_r&x_{r+1}&\hdots &x_n\\
w^{\alpha}_1x_{1\overline{\alpha}}&w^{\alpha}_2x_{2\overline{\alpha}}&\hdots&w^{\alpha}_rx_{r\overline{\alpha}}&
w^{\alpha}_1x_{1\overline{\alpha}}&\hdots&w^{\alpha}_1x_{1\overline{\alpha}}\end{pmatrix}\]
which in view of the following lemma we may abbreviate without further remark to:
\[\alpha=\begin{pmatrix} x_1&x_2&\hdots &x_r\\
w^{\alpha}_1x_{1\overline{\alpha}}&w^{\alpha}_2x_{2\overline{\alpha}}&\hdots&w^{\alpha}_rx_{r\overline{\alpha}}\end{pmatrix},\]
where here we are regarding $\overline{\alpha}$ as an element of $\mathcal{S}_r.$ With this convention, it is clear that $\boldsymbol{\psi}|_{H}:H\longrightarrow G\wr \mathcal{S}_r$ is an isomorphism.

\begin{lem}\label{lem:autfr} The groups $H$ and $\aut F_r(G)$ are isomorphic under the map
\[\begin{pmatrix} x_1&x_2&\hdots &x_r&x_{r+1}&\hdots &x_n\\
w^{\alpha}_1x_{1\overline{\alpha}}&w^{\alpha}_2x_{2\overline{\alpha}}&\hdots&w^{\alpha}_rx_{r\overline{\alpha}}&
w^{\alpha}_1x_{1\overline{\alpha}}&\hdots&w^{\alpha}_1x_{1\overline{\alpha}}\end{pmatrix} \mapsto\]
\[ \begin{pmatrix} x_1&x_2&\hdots &x_r\\
w^{\alpha}_1x_{1\overline{\alpha}}&w^{\alpha}_2x_{2\overline{\alpha}}&\hdots&w^{\alpha}_rx_{r\overline{\alpha}}\end{pmatrix}.\] Consequently, $\aut F_r(G)\cong G\wr \mathcal{S}_r.$
\end{lem}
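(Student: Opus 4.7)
The plan is to verify that the displayed map, call it $\Theta$, is well-defined, bijective, and a semigroup homomorphism, then read off the corollary from Theorem \ref{them:wreath}.

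First, for well-definedness, I would use the structure of $H = H_{11}$ recorded just before the lemma. Since $\alpha \in H$ has image $Gx_1 \cup \cdots \cup Gx_r$ and kernel generated by the identifications $x_1 \sim x_{r+1},\ldots,x_1 \sim x_n$, the restriction $\overline{\alpha}|_{[1,r]}$ must be a bijection $[1,r] \to [1,r]$, i.e.\ an element of $\mathcal{S}_r$. Consequently, the assignment
\[ \alpha\Theta : x_j \longmapsto w_j^{\alpha} x_{j\overline{\alpha}} \qquad (j \in [1,r]) \]
defines a $G$-act endomorphism of $F_r(G)$ that is bijective on generators, hence lies in $\aut F_r(G)$.

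Next, bijectivity of $\Theta$ is essentially by construction: an element of $H$ is determined by its values on $x_1,\ldots,x_r$, since the kernel of such an element forces $x_j\alpha = w_1^{\alpha} x_{1\overline{\alpha}}$ for each $j > r$. Conversely, given any $\gamma \in \aut F_r(G)$ with $x_j\gamma = w_j x_{j\sigma}$ for some $w_j \in G$ and $\sigma \in \mathcal{S}_r$, extending by $x_j \mapsto w_1 x_{1\sigma}$ for $j > r$ produces a preimage in $H$. For the homomorphism property, I would compute directly using the product rule: if $\alpha,\beta \in H$, then for $j \in [1,r]$,
\[ x_j(\alpha\beta) = (w_j^{\alpha} x_{j\overline{\alpha}})\beta = w_j^{\alpha} w_{j\overline{\alpha}}^{\beta} x_{j\overline{\alpha}\overline{\beta}}, \]
and exactly the same formula holds in $\aut F_r(G)$ for $(\alpha\Theta)(\beta\Theta)$, so $(\alpha\beta)\Theta = (\alpha\Theta)(\beta\Theta)$.

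Finally, the corollary $\aut F_r(G) \cong G \wr \mathcal{S}_r$ is immediate: the sentence preceding the lemma already notes that $\boldsymbol{\psi}|_H : H \to G \wr \mathcal{S}_r$ is an isomorphism (being the restriction of the isomorphism of Theorem \ref{them:wreath} to those elements whose permutation component lies in $\mathcal{S}_r$), and composing with $\Theta^{-1}$ gives the desired isomorphism. No genuine obstacle arises here; the only point demanding care is checking that the kernel of $\alpha \in H$ really does force the restriction of $\overline{\alpha}$ to $[1,r]$ to be a permutation of $[1,r]$, which follows by combining the description of $\ker\varepsilon_{11}$ with Lemma \ref{lem:green}(ii).
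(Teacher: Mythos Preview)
Your proof is correct. In fact the paper does not supply a proof of this lemma at all: it is stated as a straightforward observation and the text moves on immediately to the convention it introduces. Your argument fills in precisely the routine verifications one would expect (well-definedness via the description of $H=H_{11}$, bijectivity by the explicit inverse, the homomorphism check, and the final appeal to $\boldsymbol{\psi}|_H$), so there is nothing to compare against and nothing to correct.
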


Under this convention, the identity $\e=\e_{11}$ of $H$ becomes
\[\e=\begin{pmatrix}x_1&\hdots& x_r\\ x_1&\hdots&x_r\end{pmatrix} .\]

%The group $\overline{H}$ is generated by $\{ f_{i,\lambda}:(i,\lambda)\in K\}$. Of course, if $(i,\lambda)\in K$, then
%$\mathbf{p}_{\lambda i}\in H$.
 With the aim of specialising the presentation given in Theorem~\ref{them:pres}, we locate and distinguish elements in
 $H_{1\lambda}$ and $H_{i 1}$ for each $\lambda\in \Lambda$ and $i\in I$. For any equivalence relation $\tau$ on $[1,n]$ with $r$ classes, we write $\tau=\{B_{1}^{\tau}, \cdots, B_{r}^{\tau}\}$ (that is, we identify $\tau$ with the partition on $[1,n]$ that it induces).
 Let $l_{1}^{\tau},\cdots, l_{r}^{\tau}$ be the minimum elements of $B_{1}^{\tau}, \cdots, B_{r}^{\tau}$, respectively. Without loss of generality we suppose that  $l_{1}^{\tau}<\cdots<l_{r}^{\tau}$. Then $l_{1}^{\tau}=1$ and $l_{j}^{\tau}\geq j$, for any $j\in [2,r]$. Suppose now that $\alpha\in \en F_n(G)$ and rank $\alpha=r$, that is, $\alpha\in D_r$. Then $\ker \overline{\alpha}$ has $r$ equivalence classes.
Where $\tau=\ker{\overline{\alpha}}$ we simplify our notation by writing $B_j^{\ker\overline{\alpha}}=B_{j}^{\alpha}$ and
 $l^{\ker \overline{\alpha}}_j=
l^\alpha_j$. If there is no ambiguity over the choice of $\alpha$ we may simplify further to $B_j$ and $l_j$.

\begin{lem}\label{lem:kernels} Let $\alpha,\beta\in D_r$. Then $\ker \alpha=\ker\beta$ if and only if
$\ker\overline{\alpha}=\ker\overline{\beta}$ and for any $j\in [1,r]$ there exists $g_j\in G$ such that for any $k\in B^\alpha_j=B_j=B^\beta_j$, we have
$w_k^\alpha=w^\beta_kg_j$. Moreover, we can take $g_j=(w^\beta_{l_j})^{-1}w^\alpha_{l_j}$ for $j\in [1,r]$.
\end{lem}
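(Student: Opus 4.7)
The plan is to translate the equality of kernels into a direct combinatorial/algebraic statement about the data $(\overline{\alpha}, w_1^\alpha, \hdots, w_n^\alpha)$ versus $(\overline{\beta}, w_1^\beta, \hdots, w_n^\beta)$. Since $\ker \alpha$ is a $G$-act congruence on $F_n(G)$ and since $(gx_i)\alpha = (gw_i^\alpha)x_{i\overline{\alpha}}$ for all $g \in G$ and $i \in [1,n]$, the first thing I would record is the basic observation that
\[
(gx_i,hx_j) \in \ker \alpha \iff i\overline{\alpha} = j\overline{\alpha} \text{ and } gw_i^\alpha = hw_j^\alpha.
\]
Everything else is a bookkeeping exercise built on this.

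For the forward direction, assume $\ker \alpha = \ker \beta$. First I would derive $\ker\overline{\alpha} = \ker\overline{\beta}$: given $i,j \in [1,n]$ with $i\overline{\alpha} = j\overline{\alpha}$, the observation yields $\bigl(x_i,\, w_i^\alpha(w_j^\alpha)^{-1} x_j\bigr) \in \ker \alpha = \ker \beta$, which forces $i\overline{\beta} = j\overline{\beta}$; the reverse inclusion is symmetric. Hence the blocks coincide, $B_j^\alpha = B_j = B_j^\beta$, and the minima $l_j$ agree for both. Next, fix $j$ and take any $k \in B_j$. Since $k\overline{\alpha} = l_j \overline{\alpha}$, a second application of the observation gives $\bigl(x_k,\, w_k^\alpha(w_{l_j}^\alpha)^{-1} x_{l_j}\bigr) \in \ker \alpha = \ker \beta$. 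Applying $\beta$ to both sides and comparing the $G$-coefficients at $x_{l_j\overline{\beta}} = x_{k\overline{\beta}}$ yields $w_k^\beta = w_k^\alpha(w_{l_j}^\alpha)^{-1} w_{l_j}^\beta$, which rearranges to $w_k^\alpha = w_k^\beta g_j$ with $g_j = (w_{l_j}^\beta)^{-1} w_{l_j}^\alpha$, exactly as claimed.

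For the converse, suppose $\ker\overline{\alpha} = \ker\overline{\beta}$ and that elements $g_j \in G$ with the stated property exist. If $(gx_i, hx_j) \in \ker \alpha$, then $i\overline{\alpha} = j\overline{\alpha}$, so $i$ and $j$ lie in a common block $B_m$; the hypothesis gives $w_i^\alpha = w_i^\beta g_m$ and $w_j^\alpha = w_j^\beta g_m$, so the identity $gw_i^\alpha = hw_j^\alpha$ becomes $gw_i^\beta g_m = hw_j^\beta g_m$, i.e.\ $gw_i^\beta = hw_j^\beta$. Combined with $i\overline{\beta} = j\overline{\beta}$ this gives $(gx_i, hx_j) \in \ker \beta$, and the reverse inclusion follows by symmetry.

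No step looks genuinely hard; the only subtlety worth attention is keeping the side of the $G$-action straight, so that the common correction factor $g_j$ sits on the \emph{right} of $w_k^\beta$ (as dictated by the observation in the first paragraph), rather than on the left. Once the opening characterisation of $\ker \alpha$ is written down, both directions are essentially forced.
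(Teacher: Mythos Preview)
Your proof is correct and follows essentially the same approach as the paper's: both directions hinge on the characterisation $(gx_i,hx_j)\in\ker\alpha \Leftrightarrow i\overline{\alpha}=j\overline{\alpha}$ and $gw_i^\alpha=hw_j^\alpha$, and both compare a generic $k\in B_j$ against the block minimum $l_j$ to extract the correction element $g_j=(w^\beta_{l_j})^{-1}w^\alpha_{l_j}$. The only cosmetic difference is that you spell out why $\ker\overline{\alpha}=\ker\overline{\beta}$, whereas the paper dismisses this as clear.
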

\begin{proof} If $\ker\alpha=\ker\beta$, then clearly $\ker\overline{\alpha}=\ker\overline{\beta}$. Now for any $j\in [1,r]$
and $k\in B^\alpha_j=B^\beta_j$, we have that $((w^\alpha_{l_j})^{-1}x_{l_j})\alpha=
((w^\alpha_k)^{-1}x_k)\alpha$ and so $((w^\alpha_{l_j})^{-1}x_{l_j})\beta=
((w^\alpha_k)^{-1}x_k)\beta$, giving that $w^\alpha_k=w^\beta_k((w^\beta_{l_j})^{-1}w^\alpha_{l_j})$. We may thus take
$g_j=(w^\beta_{l_j})^{-1}w^\alpha_{l_j}$.

Conversely, suppose that
$\ker\overline{\alpha}=\ker\overline{\beta}$ (and has blocks $\{B_1, \cdots, B_r\}$) and for any $j\in [1,r]$ there exists $g_j\in G$ sastisfying the given condition. Let $ux_h,vx_k\in F_n(G)$. Then
\[\begin{array}{rcl}
(ux_h)\alpha=(vx_k)\alpha&\Leftrightarrow&  h,k\in B_j\mbox{ for some $j\in [1,r]$ and }
uw^\alpha_h=vw^\alpha_k\\
&\Leftrightarrow&   h,k\in B_j\mbox{ for some $j\in [1,r]$ and }
uw^\beta_hg_j=vw^\beta_kg_j\\
&\Leftrightarrow&   h,k\in B_j\mbox{ for some $j\in [1,r]$ and }
uw^\beta_h=vw^\beta_k\\
&\Leftrightarrow& (ux_h)\beta=(vx_k)\beta\end{array},\]
so that $\ker \alpha=\ker\beta$ as required.
\end{proof}

For the following, we denote by $P(n,r)$ the set of equivalence relations on $[1,n]$ having $r$ classes. Of course, $
|P(n,r)|=S(n,r)$, where $S(n,r)$ is a Stirling number of the second kind, but we shall not need that fact here.

\begin{coro}\label{coro:kernels} The map $\boldsymbol{\tau}: I\rightarrow  G^{n-r}\times P(n,r)$ given by
\[i\boldsymbol{\tau}=((w^\alpha_{2},\hdots, w^{\alpha}_{l_2-1},w^\alpha_{l_2+1},\hdots,w^\alpha_{l_r-1},
w^\alpha_{l_r+1},\hdots,
w^\alpha_n),\ker\overline{\alpha} )\]
where $\alpha\in R_i$ and $w^\alpha_{l_j}=1_G$, for all $j\in[1,r]$,
is a bijection.
\end{coro}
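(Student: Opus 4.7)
The plan is to exhibit, for each $i\in I$, a canonical representative $\alpha\in R_i$ for which $w^\alpha_{l_j}=1_G$ for all $j\in[1,r]$, and then read off injectivity and surjectivity directly from Lemma~\ref{lem:kernels}.

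\textbf{Well-definedness.} Fix $i\in I$ and choose any $\alpha_0\in R_i$. Since $I$ is identified with the set of kernels of rank $r$ endomorphisms, $\ker\overline{\alpha_0}$ is an element of $P(n,r)$ that depends only on $i$; write its blocks as $B_1,\ldots,B_r$ with minima $l_1<\cdots<l_r$ (so in particular $l_1=1$). Using Lemma~\ref{lem:kernels}, define $\alpha$ with the same underlying map $\overline{\alpha}=\overline{\alpha_0}$ but with $w^\alpha_k=w^{\alpha_0}_k(w^{\alpha_0}_{l_j})^{-1}$ for $k\in B_j$. Then $\alpha\in R_i$, $w^\alpha_{l_j}=1_G$ for all $j\in[1,r]$, so such a representative exists. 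If $\beta\in R_i$ is another such representative, Lemma~\ref{lem:kernels} produces $g_j\in G$ with $w^\alpha_k=w^\beta_k g_j$ for every $k\in B_j$; evaluating at $k=l_j$ gives $1_G=1_G\cdot g_j$, hence $g_j=1_G$, and so $w^\alpha_k=w^\beta_k$ for every $k\in[1,n]$. Therefore the tuple appearing as the first coordinate of $i\boldsymbol{\tau}$ does not depend on the choice of representative, and $\boldsymbol{\tau}$ is a well-defined function.

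\textbf{Injectivity.} Suppose $i,i'\in I$ satisfy $i\boldsymbol{\tau}=i'\boldsymbol{\tau}$, and choose canonical representatives $\alpha\in R_i$, $\beta\in R_{i'}$ as above. Equality of the second coordinates yields $\ker\overline{\alpha}=\ker\overline{\beta}$, so the partitions (and the distinguished minima $l_1,\ldots,l_r$) coincide. Equality of the first coordinates gives $w^\alpha_k=w^\beta_k$ for all $k\in[1,n]\setminus\{l_1,\ldots,l_r\}$, while by our normalisation $w^\alpha_{l_j}=w^\beta_{l_j}=1_G$ for each $j$. Taking $g_j=1_G$ in Lemma~\ref{lem:kernels} yields $\ker\alpha=\ker\beta$, and hence $i=i'$.

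\textbf{Surjectivity.} Given $\bigl((v_2,\ldots,v_{l_2-1},v_{l_2+1},\ldots,v_{l_r-1},v_{l_r+1},\ldots,v_n),\,\pi\bigr)\in G^{n-r}\times P(n,r)$, write $\pi=\{B_1,\ldots,B_r\}$ with minima $l_1<\cdots<l_r$ and define $\alpha\in\en F_n(G)$ by declaring $\overline{\alpha}(k)=j$ whenever $k\in B_j$, $w^\alpha_{l_j}=1_G$ for $j\in[1,r]$, and $w^\alpha_k=v_k$ for $k\notin\{l_1,\ldots,l_r\}$. Then $\im\alpha=Gx_1\cup\cdots\cup Gx_r$ has rank $r$, so $\alpha\in D_r$, and setting $i=\ker\alpha\in I$ we have by construction that $i\boldsymbol{\tau}$ equals the prescribed pair.

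There is no real obstacle here: the content of the corollary is entirely bookkeeping on top of Lemma~\ref{lem:kernels}, with the only mildly delicate point being the verification that the canonical representative exists and is unique, which is precisely what the lemma delivers once one evaluates its scaling constants $g_j$ at the block minima.
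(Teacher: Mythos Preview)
Your proof is correct and follows essentially the same route as the paper's: both arguments produce a normalised representative $\alpha\in R_i$ with $w^\alpha_{l_j}=1_G$ via the scaling afforded by Lemma~\ref{lem:kernels}, and then read off well-definedness, injectivity and surjectivity directly. The only cosmetic difference is that the paper, when building this representative, simultaneously replaces $\overline{\alpha_0}$ by the map $k\mapsto j$ (for $k\in B_j$), whereas you keep $\overline{\alpha}=\overline{\alpha_0}$; since only $\ker\overline{\alpha}$ and the $w^\alpha_k$ enter the definition of $\boldsymbol{\tau}$, this makes no difference.
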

\begin{proof} For $i\in I$ choose $\beta\in R_i$ and then define
$\alpha\in\en F_n(G)$ by $x_k\alpha=w^\beta_k(w^\beta_{l_j})^{-1}x_j$, where $k\in B^{\beta}_j$. It is clear
from Lemma~\ref{lem:kernels} that $\ker\alpha=\ker\beta$ and so $\alpha\in R_i$. Now
$x_{l_j}\alpha=w^{\beta}_{l_j}(w^{\beta}_{l_j})^{-1}x_j=x_j$, so that $i\boldsymbol{\tau}$ is defined. An easy argument, again
from Lemma~\ref{lem:kernels}, gives that $\boldsymbol{\tau}$ is well defined and one-one.

For $\mu\in P(n,r)$ let
$\nu_{\mu}:[1,n]\rightarrow [1,r]$ be given by $k\nu_{\mu}=j$ where $k\in B^\mu_j$.
Now for $((h_1,\hdots, h_{n-r}),\mu)\in G^{n-r}\times P(n,r)$, define
\[\alpha=((1_G,h_1,\hdots, h_{l^\mu_2-2},I_G,h_{l^\mu_2-1},\hdots, h_{l^\mu_r-r},1_G,h_{l^\mu_r-r+1},\hdots, h_{n-r}),\nu_{\mu})\boldsymbol{\psi}^{-1},\] where $\boldsymbol{\psi}$ is defined as in Theorem 2.3.
It is clear that if $\alpha\in R_i$, then $i\boldsymbol{\tau} =((h_1,\hdots, h_{n-r}),\mu)$. Thus $\boldsymbol{\tau}$ is a bijection as required.
\end{proof}

\begin{coro}\label{Theta set}
Let $\Theta$ be the set defined by $$\Theta=\{\alpha\in D_r: x_{l_{j}^{\alpha}}\alpha=x_{j}, j\in[1,r]\}.$$ Then
 $\Theta$ is a  transversal of the $\eh$-classes of $L_1$.
\end{coro}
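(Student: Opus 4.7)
The plan is to verify two claims: $\Theta \subseteq L_1$, and each group $\mathcal{H}$-class of $L_1$ meets $\Theta$ in exactly one element. For the first, take $\alpha \in \Theta$. By construction $x_1, \ldots, x_r$ all lie in $\im \alpha$, so the free $G$-subact $\im \alpha$ contains $Gx_1 \cup \cdots \cup Gx_r$. Since $\alpha$ has rank $r$, equality holds, and Lemma~\ref{lem:green}(i) yields $\alpha \in L_1$. It therefore suffices to show that for each $i \in I$, the set $\Theta \cap H_{i1} = \Theta \cap R_i$ is a singleton.

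For existence, given $i \in I$ I would simply invoke the construction from the proof of Corollary~\ref{coro:kernels}: choose any $\beta \in R_i$ with kernel blocks $B_j^\beta$ having minimums $l_j^\beta$, and define $\alpha$ by $x_k \alpha = w_k^\beta (w_{l_j^\beta}^\beta)^{-1} x_j$ for $k \in B_j^\beta$. Then $\ker \overline{\alpha} = \ker \overline{\beta}$ (so $B_j^\alpha = B_j^\beta$ and $l_j^\alpha = l_j^\beta$), Lemma~\ref{lem:kernels} applied with $g_j = w_{l_j^\beta}^\beta$ gives $\ker \alpha = \ker \beta = i$, and $x_{l_j^\alpha}\alpha = x_j$ by design. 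Hence $\alpha \in \Theta \cap R_i$.

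For uniqueness, suppose $\alpha, \alpha' \in \Theta \cap R_i$. Since $\ker \alpha = \ker \alpha'$, Lemma~\ref{lem:kernels} gives $\ker \overline{\alpha} = \ker \overline{\alpha'}$, so $B_j^\alpha = B_j^{\alpha'}$ and $l_j^\alpha = l_j^{\alpha'}$ for every $j$. The defining property $x_{l_j}\alpha = x_j = x_{l_j}\alpha'$ forces $\overline{\alpha}$ and $\overline{\alpha'}$ to share the value $j$ at the minimum of each block $B_j$, and because each is constant on $B_j$, this yields $\overline{\alpha} = \overline{\alpha'}$. The same property delivers $w_{l_j}^\alpha = 1_G = w_{l_j}^{\alpha'}$, so the element $g_j = (w_{l_j}^{\alpha'})^{-1} w_{l_j}^\alpha$ from Lemma~\ref{lem:kernels} equals $1_G$ on every block, whence $w_k^\alpha = w_k^{\alpha'}$ for all $k$. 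Thus $\alpha = \alpha'$. The only substantive issue is keeping $\overline{\alpha}$ and the $G$-coefficients $w_k^\alpha$ properly separated when applying Lemma~\ref{lem:kernels}; once this bookkeeping is in place the argument is routine, and indeed the existence half is already implicit in the proof of Corollary~\ref{coro:kernels}.
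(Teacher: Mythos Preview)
Your proof is correct and follows essentially the same approach as the paper's: both establish $\Theta\subseteq L_1$, then prove uniqueness via Lemma~\ref{lem:kernels} using $w_{l_j}^\alpha=1_G$, and existence via Corollary~\ref{coro:kernels}. The only minor difference is in the existence step: you observe that the explicit element constructed in the proof of Corollary~\ref{coro:kernels} already lies in $\Theta\cap H_{i1}$, whereas the paper phrases this as first obtaining some $\alpha\in R_i$ with $w_{l_j}^\alpha=1_G$ and then composing with a suitable $\beta$ to land in $L_1$---your route is slightly more direct but the content is the same.
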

\begin{proof} Clearly, $\im \alpha =Gx_{1}\cup\cdots \cup Gx_{r}$, for any $\alpha\in \Theta$, and so that $\Theta$ is a subset of $L_{1}$.

Next, we show that for each $i\in I $, $|H_{i1}\cap \Theta|=1$.
Suppose that $\alpha,\beta\in \Theta$ and $\ker \alpha=\ker \beta$. Clearly  $\ker \overline{\alpha}=\ker\overline{\beta}$
and so $B^\alpha_j=B_j=B^\beta_j$ for any $j\in [1,r]$, and by definition of $\Theta$,
$w^\alpha_{l_j}=w^\beta_{l_j}=1_G$. It is then clear from Lemma~\ref{lem:kernels} that
for any $k\in B_j$ we have
\[x_k\alpha=w^\alpha_kx_j=w^\beta_kx_j=x_k\beta,\]
so that $\alpha=\beta$.

It only remains to show that for any $i\in I$ we have  $|H_{i1}\cap \Theta|\neq \varnothing$. By
Corollary~\ref{coro:kernels}, for  $i\in I$ we can find $\alpha\in R_i$  such that $w^\alpha_{l_j}=1_G$ for all $j\in
[1,r]$. Composing $\alpha$ with $\beta\in \en F_n(G)$ where $x_{l_j\overline{\alpha}}\beta=x_j$ for all $j\in [1,r]$
and $x_k\beta=x_1$ else, we clearly have that $\alpha\beta\in  H_{i1}\cap \Theta$.
\end{proof}

For each $i\in I$, we denote the unique element in $H_{i1}\cap\Theta$ by $\mathbf{r}_{i}$. Notice that
$\mathbf{r}_1=\e$.

On the other hand, for $\lambda=(u_1,u_2,\hdots,u_r)\in \Lambda$, we define
\begin{align*}
\mathbf{q}_{\lambda}=\mathbf{q}_{(u_{1},\cdots, u_{r})}&=\left(\begin{array}{cccccccccccccccccccc}
x_{1} & x_{2} & \cdots & x_{r} & x_{r+1} & \cdots & x_{n} \\
x_{u_{1}} & x_{u_{2}} & \cdots & x_{u_{r}} & x_{u_{1}} &\cdots & x_{u_{1}} \end{array}\right)\\
&= \left(\begin{array}{cccc} x_{1} & x_{2} & \cdots & x_{r}\\x_{u_{1}} & x_{u_{2}} & \cdots &
x_{u_{r}}\end{array}\right).
\end{align*}
It is easy to see that $\mathbf{q}_{\lambda}\in H_{1\lambda}$, as $\ker \mathbf{q}_{\lambda}= \langle (x_{1},x_{i}):
r+1\leq i\leq n\rangle$. In particular, we have \[\mathbf{q}_1=\mathbf{q}_{(1,2, \cdots,
r)}=\left(\begin{array}{ccccccccc}
x_{1} & x_{2} & \cdots & x_{r} \\
x_{1} & x_{2} & \cdots & x_{r}  \end{array}\right)=\e.\]

At this point we invoke once more a little standard semigroup theory. Let $K$ be a group, let $J,\Gamma$ be non-empty
sets and let $M=(m_{\gamma j})$ be a $\Gamma\times J$ matrix with entries from $K\cup \{0\}$ (where $0$ is a new
symbol), with the property that every row and column of $M$ contains at least one entry from $K$. A {\em Rees matrix
semigroup} $\mathcal{M}^0=\mathcal{M}$$^{0}$$(K; J, \Gamma; M)$ has underlying set
\[(J\times K\times \Gamma)\cup \{ 0\}\]
with binary operation given by
\[(j,a,\lambda)(k,b,\mu)=(j,am_{\lambda k}b,\mu)\mbox{ if }m_{\lambda k}\neq 0,\]
all other products being $0$.

By \cite[Theorem 4.9]{gould:1995} if we put
\[D_r^0=D_r\cup \{ 0\}\]
and define a binary operation by
\[\alpha\cdot \beta=\left\{ \begin{array}{ll} \alpha\beta&\mbox{if }\alpha,\beta\in D_r\mbox{ and }\rank \alpha\beta=r\\
0&\mbox{ else}\end{array} \right. ,\] then $D_r^0$ is a semigroup under $\cdot$ which is completely 0-simple. We do not
need to give the specifics of what the latter property entails, since,  by the  Rees Theorem (see \cite[Chapter
III]{howie:1995}),  $D_{r}^{0}$ is isomorphic to  $\mathcal{M}^0=\mathcal{M}$$^{0}$$(H; I, \Lambda; P)$, where
$P=(\mathbf{p}_{\lambda i})$ and $\mathbf{p}_{\lambda i}=(\mathbf{q}_{\lambda}\mathbf{r}_{i})$ if $\rank
\mathbf{q}_{\lambda}\mathbf{r}_{i}=r$, and is $0$ else. Our choice of $P$ will allow us at crucial points to modify
the presentation given in Theorem~\ref{them:pres}.

\section{Presentation of maximal subgroups of $\ig(E)$} \label{sec:pres}

Let $E$ be a biordered set; from \cite{easdown:1985} we can assume that $E=E(S)$ for some semigroup $S$. An {\it
$E$-square} is a sequence $(e,f,g,h,e)$ of elements of $E$ with $e~\mathcal{R}$
$~f~\mathcal{L}$ $~g~\mathcal{R}$ $~h~\mathcal{L}$ $~e$. We draw such an $E$-square as $\begin{bmatrix} e&f\\
h&g\end{bmatrix}$. The following results are folklore (cf. \cite{gouldyang:2013}).

\begin{lem} \label{lem:esquarerectband} The elements of
an $E$-square $\begin{bmatrix} e&f\\ h&g\end{bmatrix}$  form a rectangular band
(within $S$) if and only if one (equivalently, all) of the following four equalities holds: $eg=f$, $ge=h$, $fh=e$ or $hf=g$.
\end{lem}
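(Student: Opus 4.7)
The plan is to exploit the fact that the biordered hypotheses already force eight of the sixteen products on $\{e,f,g,h\}$. Since $e,f\in E(S)$ with $e\,\mathcal{R}\,f$, we get $ef=f$ and $fe=e$, and the analogous identities on each of the other three edges of the square give $fg=f$, $gf=g$, $gh=h$, $hg=g$, $eh=e$, $he=h$. The multiplication table of a $2\times 2$ rectangular band on $\{e,f,g,h\}$, with rows as $\mathcal{R}$-classes and columns as $\mathcal{L}$-classes, consists of exactly these eight edge identities plus the four ``diagonal'' identities $eg=f$, $ge=h$, $fh=e$, $hf=g$. So the forward direction of the lemma is automatic, and the task reduces to showing that any single diagonal identity forces the remaining three.

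I would first handle the case $eg=f$ by deriving the other three identities in turn. The identity $fh=e$ is immediate from $fh=(eg)h=e(gh)=eh=e$, using $gh=h$ and $eh=e$. To obtain $ge=h$, I would show that $ge$ is an idempotent lying in $H_h$. Idempotency comes from $(ge)(ge)=g(eg)e=gfe=(gf)e=ge$, using $gf=g$; the Green's-class computations $g(ge)=ge$ and $(ge)g=g(eg)=gf=g$ give $g\,\mathcal{R}\,ge$, and $(ge)e=ge$ and $e(ge)=(eg)e=fe=e$ give $ge\,\mathcal{L}\,e$. Hence $ge\in R_g\cap L_e=H_h$, an $\mathcal{H}$-class that is a group because it contains the idempotent $h$, so $ge=h$ as the unique idempotent of that group. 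Finally, $hf=(ge)f=g(ef)=gf=g$.

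The remaining three implications follow by symmetry. Horizontally flipping, vertically flipping, or rotating by $180^\circ$ the array $\begin{bmatrix}e&f\\h&g\end{bmatrix}$ each yields another $E$-square (rows remain $\mathcal{R}$-classes and columns remain $\mathcal{L}$-classes), and under the appropriate one of these three symmetries each of the hypotheses $ge=h$, $fh=e$, $hf=g$ is converted into the ``top-left times bottom-right equals top-right'' form, so the argument above applies verbatim. The main delicate point in the whole proof is the middle step: recognising that $ge$ lands in the group $\mathcal{H}$-class $H_h$ and so must coincide with the identity $h$. All other steps are short, mechanical manipulations using the eight edge identities.
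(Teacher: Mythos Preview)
Your proof is correct. The paper itself does not supply a proof of this lemma: it is introduced as folklore with a reference to \cite{gouldyang:2013}, so there is no argument in the paper to compare against. Your derivation of the eight edge identities from the $\mathcal{R}$- and $\mathcal{L}$-relations, followed by the chain $eg=f \Rightarrow fh=e \Rightarrow ge=h \Rightarrow hf=g$ (the middle step via the observation that $ge$ is an idempotent in the group $\mathcal{H}$-class $H_h$), is clean and complete, and the symmetry argument for the remaining three starting hypotheses is valid since each of the three nontrivial symmetries of the square preserves the $E$-square structure.
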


\begin{lem}\label{lem:sandwich}
Let $\mathcal{M}^0=\mathcal{M}$$^{0}$$(K; J, \Gamma; M)$ be a Rees matrix semigroup over a group $K$ with sandwich matrix
$M=(m_{\lambda j})$. For any $j\in J, \lambda\in \Gamma$ write $e_{j\lambda}$ for the idempotent $(j,m_{\lambda j}^{-1},\lambda)$. Then an
$E$-square $\begin{bmatrix} e_{i\lambda}&e_{i\mu}\\ e_{j\lambda}&e_{j\mu}\end{bmatrix}$  is a rectangular band if and only if
$m_{\lambda i}^{-1}m_{\lambda j}=m_{\mu i}^{-1}m_{\mu j}$.
\end{lem}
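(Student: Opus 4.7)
The plan is to reduce the assertion to a single group-theoretic equation in $K$, by invoking Lemma~\ref{lem:esquarerectband} to translate the rectangular-band condition into one product equality.

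First I would sanity-check that the array really is an $E$-square. In $\mathcal{M}^0(K; J, \Gamma; M)$, Green's $\mathcal{R}$ matches first coordinates and Green's $\mathcal{L}$ matches third coordinates, so each of the four required incidences $e_{i\lambda}\,\mathcal{R}\,e_{i\mu}$, $e_{i\mu}\,\mathcal{L}\,e_{j\mu}$, $e_{j\mu}\,\mathcal{R}\,e_{j\lambda}$, $e_{j\lambda}\,\mathcal{L}\,e_{i\lambda}$ is immediate from the index pattern. I would also note that the mere existence of the four idempotents $e_{i\lambda},e_{i\mu},e_{j\lambda},e_{j\mu}$ forces $m_{\lambda i},m_{\mu i},m_{\lambda j},m_{\mu j}$ all to lie in $K$ rather than be $0$, so the products that follow are automatically non-zero and stay within the group $\mathcal{H}$-classes involved.

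Next, by Lemma~\ref{lem:esquarerectband} the $E$-square is a rectangular band if and only if any one of the four equalities holds; I would pick $e_{i\lambda}e_{j\mu}=e_{i\mu}$ (the ``$eg=f$'' choice in the notation of that lemma, with $e=e_{i\lambda}$, $f=e_{i\mu}$, $g=e_{j\mu}$, $h=e_{j\lambda}$). A direct application of the Rees multiplication gives
\[
e_{i\lambda}e_{j\mu}=(i,m_{\lambda i}^{-1},\lambda)(j,m_{\mu j}^{-1},\mu)=(i,\,m_{\lambda i}^{-1}m_{\lambda j}m_{\mu j}^{-1},\,\mu),
\]
and this equals $e_{i\mu}=(i,m_{\mu i}^{-1},\mu)$ precisely when $m_{\lambda i}^{-1}m_{\lambda j}m_{\mu j}^{-1}=m_{\mu i}^{-1}$ as an equation in $K$. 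Multiplying on the right by $m_{\mu j}$ yields the claimed equivalent condition $m_{\lambda i}^{-1}m_{\lambda j}=m_{\mu i}^{-1}m_{\mu j}$.

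There is no real obstacle here; the only thing to be careful about is matching the orientation of the $E$-square $\begin{bmatrix} e_{i\lambda}&e_{i\mu}\\ e_{j\lambda}&e_{j\mu}\end{bmatrix}$ with the ``$eg=f$'' convention of Lemma~\ref{lem:esquarerectband}, so that the right entry appears between $e$ and $f$ in the sandwich product. Any of the other three equalities from Lemma~\ref{lem:esquarerectband} would lead to the same identity after a symmetric calculation, which gives a useful internal consistency check.
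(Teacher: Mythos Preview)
Your argument is correct: the identification of $e,f,g,h$ with the four idempotents is done properly, the Rees product is computed correctly, and the resulting group equation is exactly the one in the statement. The paper itself does not give a proof of this lemma (it is stated as folklore, with a reference), so there is nothing to compare against; your direct computation via Lemma~\ref{lem:esquarerectband} is precisely the standard verification one would expect.
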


An $E$-square $(e,f,g,h,e)$ is {\it singular} if, in addition, there exists $k\in E$ such that either:
\[
\left\{\begin{array}{ll}
ek=e,\, fk=f, \,ke=h,\,  kf=g \mbox{ or}\\
ke=e,\, kh=h,\, ek=f,\, hk=g.
\end{array}\right.
\]
We call a singular square for which the first condition holds an {\it up-down  singular square}, and that satisfying  the second condition a {\it left-right singular square}.

%Let $S$ be a semigroup with a non-empty set of idempotents $E$, let $\ig(E)$ be the corresponding free idempotent generated s%emigroup.
For $e\in E$ we let $\overline{H}$ be the maximal subgroup of $\overline{e}$ in $\ig(E)$, (that is,
$\overline{H}=H_{\overline{e}}$).
We now recall the recipe for obtaining a presentation for $\overline{H}$  obtained by  Gray and  Ru\v{s}kuc \cite{gray:2012}; for further details, we refer the reader to that article.

 We use $J$ and $\Gamma$ to denote  the set of $\mathcal{R}$-classes and the set of $\mathcal{L}$-classes, respectively, in the $\mathcal{D}$-class $\overline{D}=D_{\overline{e}}$ of $\overline{e}$ in $\ig(E)$. In view of Proposition~\ref{prop:remarkable}, $J$ and $\Gamma$ also label
 the set of $\mathcal{R}$-classes and the set of $\mathcal{L}$-classes, respectively, in the $\mathcal{D}$-class $D=D_e$ of $e$ in $S$. For every $i\in J$ and $\lambda\in \Gamma$, let $\overline{H}_{i\lambda}$ and $H_{i\lambda}$ denote, respectively, the $\eh$-class corresponding to the intersection of the $\ar$-class indexed by $i$ and the $\el$-class indexed by $\lambda$ in
 $\ig(E)$, respectively $S$,
 %$=R_{i}~\cap~L_{\lambda}$
 so that $\overline{H}_{i\lambda}$ and $H_{i\lambda}$ are $\mathcal{H}$-classes of $\overline{D}$
 and $D$, respectively. Where $\overline{H}_{i\lambda}$ (equivalently, $H_{i\lambda}$) contains an idempotent, we denote it by $\overline{e_{i\lambda}}$ (respectively, $e_{i\lambda}$).
 Without loss of generality we assume $1\in J\cap \Gamma$ and $\overline{e}=\overline{e_{11}}\in\overline{H}_{11}=\overline{H}$, so that $e=e_{11}\in H_{11}=H$.
For each $\lambda\in \Gamma,$ we abbreviate $\overline{H}_{1\lambda}$ by $\overline{H}_{\lambda}$,
and  ${H}_{1\lambda}$ by ${H}_{\lambda}$ and so, $\overline{H}_{1}=\overline{H}$ and $H_1=H$.

Let $\overline{h}_{\lambda}$ be an element in $\overline{ E}^{*}$ such that $\overline{H}_{1}\overline{h}_{\lambda}=
\overline{H}_{\lambda}$, for each $\lambda\in \Gamma$. The reader should be aware that this is a point where we are most certainly abusing notation: whereas $\overline{h}_{\lambda}$ lies in the free monoid on $\overline{E}$, by writing
$\overline{H}_{1}\overline{h}_{\lambda}=
\overline{H}_{\lambda}$ we mean that the image of $\overline{h}_{\lambda}$ under the natural map that takes $\overline{E}^*$ to (right translations in) the full transformation monoid on $\ig(E)$ yields
$\overline{H}_{1}\overline{h}_{\lambda}=
\overline{H}_{\lambda}$.
% $H_{1}h_{\lambda}=H_{\lambda}$.
In fact, it follows from Proposition~\ref{prop:remarkable} that the action of any generator
$\overline{f}\in \overline{E}$ on an $\mathcal{H}$-class contained in the $\mathcal{R}$-class of $\overline{e}$ in $\ig(E)$ is equivalent to the action of $f$ on the corresponding $\mathcal{H}$-class  in the original semigroup $S$. Thus
$\overline{H}_{1}\overline{h}_{\lambda}=
\overline{H}_{\lambda}$ in $\ig(E)$  is equivalent to the corresponding statement
 $H_1h_{\lambda}=H_{\lambda}$  for $S$, where $h_{\lambda}$ is the image of $\overline{h}_{\lambda}$ under the natural map to
 $\langle E\rangle ^1$.

We say that $\{\overline{h}_{\lambda}~|~\lambda\in \Gamma\}$ forms a {\it{Schreier system of representatives}}, if every prefix of $\overline{h}_{\lambda}$ (including the empty word) is equal to some $\overline{h}_{\mu}$, where $\mu\in \Gamma$.
Notice that the condition that $ \overline{h}_{\lambda}\overline{e}_{i\mu}=\overline{h}_{\mu}$ is equivalent to saying that $\overline{h}_{\lambda}\overline{e}_{i\mu}$
lies in the Schreier system.

Define $K=\{(i,\lambda)\in J\times \Gamma: H_{i\lambda} \mbox{~is a group~} \mathcal{H}\mbox{-class}\}$.
Since $D_e$ is regular, for each $i\in J$ we can find and fix an element
$\omega(i)\in\Gamma$ such that $(i,\omega(i))\in K$, so that
$\omega:J\rightarrow \Gamma$ is a function. Again for convenience we take $\omega(1)=1$.

\begin{them}\label{them:pres}\cite{gray:2012} The maximal subgroup $\overline{H}$ of $\overline{e}$ in $\ig(E)$ is defined by the presentation \[{\mathcal P}=\langle F:\Sigma\rangle\] with generators:
\[F = \{f_{i,\lambda}:~~(i,\lambda)\in K\}\]
and  defining relations $\Sigma$:

$(R1)~ f_{i,\lambda}=f_{i,\mu}\ \ \  ( \overline{h}_{\lambda}\overline{e}_{i\mu}=\overline{h}_{\mu})$;

$(R2)~f_{i,\omega(i)}=1\ \ \ (i\in J)$;

$(R3)~f_{i,\lambda}^{-1}f_{i,\mu}=f_{k,\lambda}^{-1}f_{k,\mu}\ \ \ \bigg(\begin{bmatrix} e_{i\lambda}&e_{i\mu}\\ e_{k\lambda}&e_{k\mu}\end{bmatrix} \mbox{~is a singular square}\bigg)$.
\end{them}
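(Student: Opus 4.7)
The plan is to deduce this presentation by applying Ru\v{s}kuc's general Reidemeister--Schreier-type rewriting for maximal subgroups of semigroups \cite{Ru} to the defining presentation of $\ig(E)$ itself. The inputs are the standard presentation with generators $\overline{E}$ and basic-product relations $\overline{e}\overline{f}=\overline{ef}$, together with the bijective correspondence from Proposition~\ref{prop:remarkable} between $\ar$- and $\el$-classes inside the $\dee$-class $\overline{D}$ of $\overline{e}$ and those inside the $\dee$-class $D$ of $e$ in $\langle E\rangle$; this correspondence lets us index $\overline{D}$ by the same sets $J$ and $\Gamma$ used for $D$.

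First I would fix the Schreier system $\{\overline{h}_\lambda:\lambda\in\Gamma\}$ of words over $\overline{E}$ whose images satisfy $\overline{H}\,\overline{h}_\lambda=\overline{H}_\lambda$, working entirely within the $\ar$-class of $\overline{e}$. To each $(i,\lambda)\in K$ I would assign a generator $f_{i,\lambda}$ representing the loop that starts at $\overline{e}$, moves to $\overline{H}_\lambda$ via $\overline{h}_\lambda$, acts on the right by $\overline{e_{i\lambda}}$ to land in $\overline{H}_{i\lambda}$, and returns to $\overline{H}$ along the fixed section $\omega$. The cancellation-type outputs of the Schreier procedure then yield (R1) in exactly those cases where $\overline{h}_\lambda\overline{e_{i\mu}}$ is itself a chosen representative $\overline{h}_\mu$, while (R2) records the normalisation $f_{i,\omega(i)}=1$ along the section $\omega$.

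The substantive content is relation (R3), which I would derive by rewriting each defining basic-product relation $\overline{e_{i\lambda}}\,\overline{e_{k\mu}}=\overline{e_{i\lambda}\cdot e_{k\mu}}$ through the Schreier representatives. The key observation is that such a rewriting produces a non-trivial relation in $\overline{H}$ precisely when the associated $E$-square $\begin{bmatrix} e_{i\lambda}&e_{i\mu}\\ e_{k\lambda}&e_{k\mu}\end{bmatrix}$ is singular, and that the resulting word equation simplifies to exactly $f_{i,\lambda}^{-1}f_{i,\mu}=f_{k,\lambda}^{-1}f_{k,\mu}$. To make this precise I would use the auxiliary idempotent $k$ supplied by the definition of singularity, together with Lemma~\ref{lem:esquarerectband}, to commute Schreier representatives through the basic product without generating extraneous letters, and then reassemble the outcome inside $\overline{H}$. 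Basic products whose $E$-square is not singular should already be a consequence of (R1)--(R2) once one tracks how the $\dee$-class is traversed.

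The main obstacle, I expect, is verifying that no further relations survive: every consequence in $\overline{H}$ of the basic-product relations of $\ig(E)$ must reduce, modulo (R1) and (R2), to a finite sequence of singular-square moves. This is delicate because the idempotent structure of $\ig(E)$ is constrained only by basic products, so arbitrary pairs $e,f\in E$ need not even admit one; one therefore has to check case by case that whenever a basic product of idempotents lies in $\overline{D}$, its Schreier rewriting either collapses via (R1)--(R2) or is precisely a singular-square identity. Once this completeness step is in hand, a sequence of Tietze transformations turns the raw presentation produced by Ru\v{s}kuc's machinery into the stated $\langle F:\Sigma\rangle$.
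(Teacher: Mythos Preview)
The paper does not prove this theorem: it is quoted verbatim from Gray and Ru\v{s}kuc \cite{gray:2012} and used as a black box throughout. There is therefore no proof in this paper to compare your proposal against.

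That said, your sketch is broadly faithful to the strategy the introduction attributes to \cite{gray:2012}: apply Ru\v{s}kuc's Reidemeister--Schreier machinery \cite{Ru} to the defining presentation of $\ig(E)$, index the resulting generators by the pairs $(i,\lambda)\in K$, and extract the three relation families by rewriting basic products through the Schreier system. Your identification of the completeness step---showing that every rewritten basic-product relation reduces modulo (R1) and (R2) to a singular-square identity---as the crux is accurate, and this is exactly where the work in \cite{gray:2012} lies. One point to tighten: the relations (R3) do not arise only from basic products \emph{within} $\overline{D}$, but from basic products involving an idempotent of higher $\jj$-class acting on idempotents in $\overline{D}$; the auxiliary idempotent $k$ in the definition of a singular square typically has rank strictly larger than $r$, and it is this higher-rank idempotent whose left or right multiplication furnishes the basic products that rewrite to (R3). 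Your sketch conflates this slightly by writing the basic product as $\overline{e_{i\lambda}}\,\overline{e_{k\mu}}$ between two rank-$r$ idempotents.
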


{\it For the remainder of this paper, $E$ will denote
 $E(\en F_n(G))$.} In addition, for the sake of notational convenience, we now observe the accepted convention of
 dropping the overline notation for elements of $\overline{E}^*$. In particular, idempotents of $\ig(E)$ carry the same notation as those of $\en F_n(G)$; the context should hopefully prevent confusion.

 In order to specialise the above presentation to $E$, our first step is
 to identify the singular squares.

\begin{lem}\label{lem:ss}
An $E$-square $\begin{bmatrix} \gamma&\delta\\ \xi&\nu\end{bmatrix}$ is singular if and only if
$\{ \gamma,\delta,\nu,\xi\}$ is   a rectangular band.
\end{lem}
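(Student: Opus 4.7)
The proof has two directions. For the forward direction, suppose the square is singular with witness $k \in E$. In the up-down case, from $k\gamma = \xi$ and $k\delta = \nu$ together with the basic $\mathcal{R}$-identity $\gamma\delta = \delta$ we compute
\[ \xi\delta = (k\gamma)\delta = k(\gamma\delta) = k\delta = \nu, \]
which is one of the four rectangular band conditions of Lemma~\ref{lem:esquarerectband}. In the left-right case, the conditions $\gamma k = \delta$, $\xi k = \nu$ and the $\mathcal{L}$-identity $\gamma\xi = \gamma$ give
\[ \gamma\nu = \gamma(\xi k) = (\gamma\xi) k = \gamma k = \delta, \]
another of those four conditions. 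Either way $\{\gamma,\delta,\nu,\xi\}$ is a rectangular band by Lemma~\ref{lem:esquarerectband}.

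For the converse, given the rectangular band $\{\gamma,\delta,\nu,\xi\}$, I construct an explicit idempotent $k \in E$ witnessing a left-right singular square. Let $K_1 = \ker\gamma = \ker\delta$, $K_2 = \ker\nu = \ker\xi$, $I_1 = \im\gamma = \im\xi$, $I_2 = \im\delta = \im\nu$. Each $K_1$-class contains a unique $\gamma$-fixed point in $I_1$ and a unique $\delta$-fixed point in $I_2$, and similarly for $K_2$. Reading the rectangular band hypothesis through Lemma~\ref{lem:sandwich}, one checks that for every $u \in I_1$ the $\delta$-image of $u$ (obtained via its $K_1$-class) and the $\nu$-image of $u$ (obtained via its $K_2$-class) coincide in $I_2$, with matching $G$-factors. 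Call this common element $u\phi \in I_2$; the map $\phi \colon I_1 \to I_2$ is a bijection, and $u\phi$ lies in the same $K_1$-class and the same $K_2$-class as $u$, hence in the same class of the common refinement $K_1 \wedge K_2$.

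Define $k \in \en F_n(G)$ by taking $\ker k = K_1 \wedge K_2$, with $G$-factors fixed by the sandwich-matrix compatibility, and specifying the action class by class: on any $K_1 \wedge K_2$-class $C$ meeting $I_1$ at some $u$, send $C$ to $u\phi$; on the remaining classes send $C$ to a chosen representative of $C$. The image of each class is fixed by $k$ (since $u\phi \in C$), so $k^2 = k$ and $k \in E$. The identity $k\gamma = \gamma$ holds because $\ker k$ refines $K_1$: for every $x$, $xk$ lies in the $K_1$-class of $x$, forcing $xk\gamma = x\gamma$. The identity $k\xi = \xi$ is analogous, via $K_2$. For $\gamma k = \delta$: any $x$ satisfies $x\gamma = u \in I_1$, so $x\gamma k = uk = u\phi = x\delta$; the identity $\xi k = \nu$ is analogous. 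Thus $k$ witnesses a left-right singular square.

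The main obstacle is keeping track of the $G$-factors while combining the constraints coming from $K_1$ and $K_2$ in the definition of $k$. This consistency is exactly what the sandwich-matrix equality in Lemma~\ref{lem:sandwich} guarantees, so once the rectangular band hypothesis is unpacked in the kernel/image/$G$-factor notation of Section~\ref{sec:enF}, the construction of $k$ goes through uniformly; when $G$ is trivial the argument degenerates to routine partition bookkeeping in $\mathcal{T}_n$.
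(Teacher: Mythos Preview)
Your forward direction is correct and more explicit than the paper's (which simply calls it ``standard'').

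For the converse you take a genuinely different route: the paper builds an \emph{up-down} witness via images, defining
\[
x_i\theta = \begin{cases} x_i & i\in M\cup N,\\ x_i\nu & \text{else},\end{cases}
\]
where $M,N$ index $\im\gamma$ and $\im\delta$; this is explicit in $\en F_n(G)$ and the four equalities are checked in three lines. You instead attempt a \emph{left-right} witness via kernels, taking $\ker k = K_1\wedge K_2$. The duality is real, and your key observation---that for $u\in I_1$ one has $u\delta=u\nu$ and this common value lies in the $K_1\wedge K_2$-class of $u$---is correct (it follows from $\gamma\nu=\delta$, $\delta\gamma=\gamma$, $\nu\xi=\xi$). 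So in the case $G$ trivial your construction goes through.

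The gap is the $G$-factor bookkeeping. In $\en F_n(G)$ a kernel is not a partition of $[1,n]$ but a congruence on $F_n(G)$; by Lemma~\ref{lem:kernels} it is determined by $\ker\overline{\alpha}$ together with a tuple of group elements. Your definition of $k$ (``send $C$ to $u\phi$'', ``send $C$ to a chosen representative'') is phrased at the level of transformations, and the phrase ``with $G$-factors fixed by the sandwich-matrix compatibility'' is an assertion, not an argument: Lemma~\ref{lem:sandwich} tells you when a square is a rectangular band, but it does not by itself produce the $G$-coefficients you need to make $k$ a well-defined $G$-act endomorphism satisfying all four equalities simultaneously. This can be done, but it requires unwinding Lemma~\ref{lem:kernels} for both $K_1$ and $K_2$ and checking the coefficients cohere on each $K_1\wedge K_2$-class---none of which you have written down. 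The paper avoids all of this by working with images (which are just $\langle x_m\rangle_{m\in M}$, no $G$-data to track) rather than kernels; that is exactly what makes its three-line construction so clean, and it is why the paper's Corollary~\ref{cor:updown} singles out up-down squares.
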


\begin{proof}
The proof of necessity is standard. We only need to show the sufficiency. Let
$\{ \gamma,\delta,\nu,\xi\}$  be a rectangular band so that $\gamma\nu=\delta,\nu\gamma=\xi,\delta\xi=\gamma$ and $\xi\delta=\nu$. Suppose $\im \gamma$ = $\im \xi$ = $\langle x_{m}\rangle_{m\in M}$  and   $\im \delta$ =  $\im \nu$ = $\langle x_{n}\rangle_{n\in N}$, where $|M|=|N|=r$. Put $L=M\cup N$. Define a mapping $\theta\in \en F_{n}(G)$ by
\[ x_{i}\theta = \left\{ \begin{array}{lll}
         x_{i} & \mbox{if $i\in L$};\\
         x_{i}\nu & \mbox{else}.\end{array} \right. \]
Since $\im \theta=\langle x_{l}\rangle_{l\in L}$ and for each $l\in L$, $x_{l}\theta=x_{l}$, we see that $\theta$ is an idempotent. It is also clear that $\gamma\theta=\gamma$ and $\delta\theta=\delta$, as $\im \gamma\cup \im \delta\subseteq$ $\im \theta$.

Next, we will show $\theta\gamma=\xi$. If $i\in M$, then $x_{i}\theta\gamma=x_{i}\gamma=x_{i}=x_{i}\xi$. If $i\in N$, but $i\notin M$, then $x_{i}\theta\gamma=x_{i}\gamma=x_{i}\nu\gamma=x_{i}\xi$. If $i\notin L$, then $x_{i}\theta\gamma=x_{i}\nu\gamma=x_{i}\xi$. So, $\theta\gamma=\xi$. For the remaining equality $\theta\delta=\nu$ required in the definition of a singular square, observe that, for each $i\in N$, $x_{i}\theta\delta=x_{i}=x_{i}\nu$.
On the other hand if $i\in M$, then
$x_i\theta\delta= x_i\delta=x_i\gamma\nu=x_i\nu$, since $\delta=\gamma\nu$ by assumption.  For $i\notin L$ we have $x_{i}\theta\delta=x_{i}\nu\delta=x_{i}\nu$,
since $\delta\, \el\,  \nu$.

We have proved that $\gamma\theta=\gamma$, $\delta\theta=\delta$, $\theta\gamma=\xi$ and $\theta\delta=\nu$, so that $\begin{bmatrix} \gamma&\delta\\ \xi&\nu\end{bmatrix}$ is an up-down singular square.
\end{proof}

The proof of Lemma \ref{lem:ss} shows the following:

\begin{coro}\label{cor:updown}
An $E$-square is singular if and only if it is an up-down singular square.
\end{coro}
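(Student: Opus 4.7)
The plan is to read off this corollary directly from Lemma~\ref{lem:ss}, since that lemma's proof actually produces an \emph{up-down} witness, not a generic one. The forward direction (up-down singular $\Rightarrow$ singular) is trivial by definition, so the content is entirely in showing that every singular $E$-square---whether up-down or left-right---is up-down singular. My strategy is to factor the argument as: singular $\Rightarrow$ rectangular band $\Rightarrow$ up-down singular, where the second implication is exactly what the construction in the proof of Lemma~\ref{lem:ss} gives.

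First I would dispose of the left-right case, which is the only nontrivial thing to check. Suppose $\begin{bmatrix}\gamma&\delta\\\xi&\nu\end{bmatrix}$ is left-right singular with witness $k\in E$, so that $k\gamma=\gamma$, $k\xi=\xi$, $\gamma k=\delta$, $\xi k=\nu$. Using the $E$-square relations $\gamma\,\el\,\xi$ (hence $\gamma\xi=\gamma$) together with the witness equalities, I would compute
\[
\gamma\nu \;=\; \gamma(\xi k) \;=\; (\gamma\xi)k \;=\; \gamma k \;=\; \delta,
\]
which by Lemma~\ref{lem:esquarerectband} forces $\{\gamma,\delta,\nu,\xi\}$ to be a rectangular band. (The up-down case yields the same conclusion by an essentially identical one-line calculation, although Lemma~\ref{lem:esquarerectband} already handles that.)

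Once I know the four corners form a rectangular band, I invoke the explicit construction in the proof of Lemma~\ref{lem:ss}: the idempotent $\theta\in\en F_n(G)$ defined there satisfies $\gamma\theta=\gamma$, $\delta\theta=\delta$, $\theta\gamma=\xi$, $\theta\delta=\nu$, which is precisely the up-down singular condition with witness $\theta$. Combining the two steps, every singular $E$-square is up-down singular, and the converse is immediate.

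The only real obstacle here is ensuring the proof of Lemma~\ref{lem:ss} genuinely delivers an up-down witness (which it does, by construction) and that the left-right hypothesis really does imply the rectangular band identity of Lemma~\ref{lem:esquarerectband}; both are short verifications, so no new ideas are needed beyond what is already in place.
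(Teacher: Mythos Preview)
Your proposal is correct and follows essentially the same approach as the paper: the paper simply remarks that ``the proof of Lemma~\ref{lem:ss} shows'' the corollary, relying on the fact that any singular square is a rectangular band (the ``standard'' necessity direction of Lemma~\ref{lem:ss}) and that the sufficiency proof of Lemma~\ref{lem:ss} constructs an explicit up-down witness~$\theta$. You have merely unpacked the necessity direction by giving the one-line computation for the left-right case, which is exactly the content the paper leaves as standard.
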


The next corollary is immediate from Lemmas~\ref{lem:sandwich} and \ref{lem:ss}.

\begin{coro}\label{lem:ssindr}
Let $P=(\mathbf{p}_{\lambda i})$ be the sandwich matrix of any completely 0-simple semigroup isomorphic to
$D_{r}^{0}$. Then (R3) in Theorem~\ref{them:pres} can be restated as:

$ (R3) ~ f_{i,\lambda}^{-1}f_{i,\mu}=f_{k,\lambda}^{-1}f_{k,\mu}  \ \ \ (\mathbf{p}_{\lambda i}^{-1}\mathbf{p}_{\lambda
k}=\mathbf{p}_{\mu i}^{-1}\mathbf{p}_{\mu k})$.
\end{coro}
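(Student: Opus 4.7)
The proof is essentially a matter of chaining Lemmas~\ref{lem:esquarerectband}, \ref{lem:sandwich} and \ref{lem:ss} together via the Rees isomorphism, so I would frame it as a straightforward translation argument. The plan is to show that, for each admissible index tuple $(i,k,\lambda,\mu)$ with $(i,\lambda),(i,\mu),(k,\lambda),(k,\mu)\in K$, the condition on the $E$-square $\begin{bmatrix} e_{i\lambda}&e_{i\mu}\\ e_{k\lambda}&e_{k\mu}\end{bmatrix}$ used in (R3) of Theorem~\ref{them:pres} is equivalent, as a quantifying condition, to the sandwich-matrix identity $\mathbf{p}_{\lambda i}^{-1}\mathbf{p}_{\lambda k}=\mathbf{p}_{\mu i}^{-1}\mathbf{p}_{\mu k}$. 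Since both versions of (R3) impose the same set of defining relations once this equivalence is established, the restatement follows.

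First I would note that the four corners of the $E$-square all lie in $D_r$, because $K$ indexes group $\mathcal{H}$-classes inside the $\mathcal{D}$-class of $e$, which is $D_r$. By Lemma~\ref{lem:ss}, such an $E$-square is singular precisely when $\{e_{i\lambda},e_{i\mu},e_{k\lambda},e_{k\mu}\}$ forms a rectangular band inside $D_r$. This reduces the problem to a purely algebraic condition on idempotents in $D_r$, which is the property preserved by the Rees isomorphism.

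Next I would invoke the Rees Theorem recalled at the end of Section~2: $D_r^0\cong\mathcal{M}^0(H;I,\Lambda;P)$. Under this isomorphism, the group $\mathcal{H}$-class $H_{i\lambda}$ corresponds to $\{i\}\times H\times\{\lambda\}$, and the unique idempotent $e_{i\lambda}$ therein corresponds to $(i,\mathbf{p}_{\lambda i}^{-1},\lambda)$ by the standard description of idempotents in a completely $0$-simple semigroup. Because rectangular-band structure is preserved by isomorphisms, our $E$-square is a rectangular band in $D_r$ if and only if the corresponding square of triples is a rectangular band in $\mathcal{M}^0(H;I,\Lambda;P)$. Applying Lemma~\ref{lem:sandwich} to the latter directly yields the criterion $\mathbf{p}_{\lambda i}^{-1}\mathbf{p}_{\lambda k}=\mathbf{p}_{\mu i}^{-1}\mathbf{p}_{\mu k}$.

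There is no real obstacle here; the only point that deserves a moment's care is the identification of $e_{i\lambda}$ with $(i,\mathbf{p}_{\lambda i}^{-1},\lambda)$, which is immediate from multiplying out the product $(i,\mathbf{p}_{\lambda i}^{-1},\lambda)(i,\mathbf{p}_{\lambda i}^{-1},\lambda)=(i,\mathbf{p}_{\lambda i}^{-1}\mathbf{p}_{\lambda i}\mathbf{p}_{\lambda i}^{-1},\lambda)$ in the Rees matrix semigroup. Once this is in place, composing the two equivalences gives the restated (R3).
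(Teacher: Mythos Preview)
Your proposal is correct and follows essentially the same approach as the paper, which simply records that the corollary is immediate from Lemmas~\ref{lem:sandwich} and~\ref{lem:ss}. You have merely spelled out the chain of equivalences (singular $\Leftrightarrow$ rectangular band $\Leftrightarrow$ sandwich-matrix identity) and the role of the Rees isomorphism in transporting the rectangular-band property; the reference to Lemma~\ref{lem:esquarerectband} is harmless background but not actually needed.
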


We focus on the idempotent $\epsilon=\epsilon_{11}$ of Section~\ref{sec:enF}. For the presentation ${\mathcal
P}=\langle F:\Sigma\rangle$ for our particular $\overline{H}=H_{\overline{\epsilon}}$,  we must define a
Schreier system of words $\{\mathbf{h}_{\lambda}:\lambda\in\Lambda\}$. In this instance, we can do so inductively,
using the restriction of the lexicographic order on $[1,n]^r$ to $\Lambda$. Recall that we are using the same notation
for $\mathbf{h}_{\lambda}\in E^*$ and its image under the natural morphism to the set of right translations of $\ig(E)$ and of $\en
F_n(G)$.

First, we define $\mathbf{h}_{(1,2, \cdots, r)}=1$, the empty word in $E^*$. Now let $(u_1,u_2,\hdots, u_r)\in \Lambda$
with $(1,2, \cdots, r)<(u_1,u_2,\hdots, u_r)$, and assume for induction that $\mathbf{h}_{(v_1,v_2,\hdots,v_r)}$ has
been defined for all $(v_1,v_2,\hdots,v_r)<(u_1,u_2,\hdots,u_r)$.
 Taking $u_{0}=0$ there must exist some $j\in [1,r]$ such that $u_{j}-u_{j-1}>1$.
Letting $i$ be largest such that $u_{i}-u_{i-1}>1$ observe that
\[(u_1,\hdots,u_{i-1},u_i-1,u_{i+1},\hdots,u_r)<(u_1,u_2,\hdots,u_r).\] We now define
\[\mathbf{h}_{(u_{1},\cdots, u_{r})}=\mathbf{h}_{(u_{1},\cdots, u_{i-1}, u_{i}-1, u_{i+1},\cdots, u_{r})}\alpha_{(u_{1},\cdots, u_{r})},\] where
\begin{align*}
\alpha_{(u_{1},\cdots, u_{r})}=\left(\begin{array}{cccccccccccccc} x_{1} & \cdots & x_{u_{1}} & x_{u_{1}+1} &\cdots &
x_{u_{2}} & \cdots  & x_{u_{r-1}+1} & \cdots \\
x_{u_{1}} & \cdots & x_{u_{1}} & x_{u_{2}} & \cdots & x_{u_{2}} & \cdots  & x_{u_{r}} & \cdots \\ \end{array}\right. & \\
\left.\begin{array}{ccccc} \cdots & x_{u_{r}} & x_{u_r+1}& \cdots & x_{n} \\
\cdots & x_{u_{r}} & x_{u_r}&\cdots & x_{u_{r}} \\ \end{array}\right) ; &
\end{align*}
 notice that $\alpha_{(u_{1},\cdots, u_{r})}=\e_{l(u_1,\hdots, u_r)}$
for some $l\in I$, where  $\e_{l(u_1,\hdots, u_r)}$ is the idempotent contained in the $\mathcal{H}$-class with kernel indexed by $l$ and image indexed by $(u_1,\hdots, u_r).$

\begin{lem}\label{lem:righttrans}  For all $(u_1,\hdots,u_r)\in \Lambda$ we have
$\e \mathbf{h}_{(u_1,\hdots,u_r)}=\mathbf{q}_{(u_1,\hdots,u_r)}$. Hence right translation by
$\mathbf{h}_{(u_{1},\cdots, u_{r})}$ induces a bijection from $L_{(1,\cdots,r)}$ onto $L_{(u_{1},\cdots, u_{r})}$ in
both $\en F_n(G)$ and $\ig(E)$.
\end{lem}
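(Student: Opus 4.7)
The plan is to prove the key identity $\e\mathbf{h}_{(u_1,\hdots,u_r)} = \mathbf{q}_{(u_1,\hdots,u_r)}$ by induction on $\lambda = (u_1,\hdots,u_r)\in\Lambda$ with respect to the lexicographic order used to build the Schreier system, and then deduce the bijection claim via Green's Lemma, transferring it to $\ig(E)$ by means of Proposition~\ref{prop:remarkable}. The base case $\lambda = (1,2,\hdots,r)$ is immediate, since $\mathbf{h}_{(1,\hdots,r)}$ is the empty word and $\mathbf{q}_{(1,\hdots,r)}=\e$, so both sides equal $\e$.

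For the inductive step, let $i\in[1,r]$ be the largest index with $u_i - u_{i-1} > 1$, set $v=(u_1,\hdots,u_{i-1},u_i-1,u_{i+1},\hdots,u_r)$, and apply the inductive hypothesis to get $\e\mathbf{h}_v = \mathbf{q}_v$. Using the recursive definition,
\[\e\mathbf{h}_{(u_1,\hdots,u_r)} = (\e\mathbf{h}_v)\alpha_{(u_1,\hdots,u_r)} = \mathbf{q}_v\,\alpha_{(u_1,\hdots,u_r)},\]
so the real content is the equality $\mathbf{q}_v\,\alpha_{(u_1,\hdots,u_r)}=\mathbf{q}_{(u_1,\hdots,u_r)}$. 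This is checked by evaluating on each $x_j$: for $j\in [1,r]\setminus\{i\}$ the image $x_j\mathbf{q}_v=x_{u_j}$ is a fixed point of the idempotent $\alpha_{(u_1,\hdots,u_r)}$ (being a ``breakpoint'' of its block structure), and similarly for $j>r$ the image is $x_{u_1}$, again fixed. The only nontrivial case is $j=i$: here $x_i\mathbf{q}_v=x_{u_i-1}$, and the maximality of $i$ forces $u_{i-1}<u_i-1<u_i$, so $u_i-1$ lies in the block $(u_{i-1},u_i]$ of $\alpha_{(u_1,\hdots,u_r)}$ and is therefore sent to $x_{u_i}=x_i\mathbf{q}_{(u_1,\hdots,u_r)}$, as required.

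For the ``hence'' statement, $\mathbf{q}_{(u_1,\hdots,u_r)}\in H_{1,(u_1,\hdots,u_r)}\subseteq R_1=R_\e$, so $\e$ and $\mathbf{q}_{(u_1,\hdots,u_r)}$ are $\ar$-related in $\en F_n(G)$; Green's Lemma then yields that right translation by $\mathbf{h}_{(u_1,\hdots,u_r)}$ is an $\eh$-class preserving bijection from $L_{(1,\hdots,r)}$ onto $L_{(u_1,\hdots,u_r)}$ in $\en F_n(G)$. To descend the same statement to $\ig(E)$, note that the image of $\e\mathbf{h}_{(u_1,\hdots,u_r)}$ under $\boldsymbol{\phi}$ (with overlines suppressed) is $\mathbf{q}_{(u_1,\hdots,u_r)}\in R_\e$, so by Proposition~\ref{prop:remarkable}(ii), $\e\mathbf{h}_{(u_1,\hdots,u_r)}$ lies in the $\ar$-class of $\e$ within $\ig(E)$, and a second application of Green's Lemma in $\ig(E)$ produces the required bijection on $\mathcal{L}$-classes there. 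No serious obstacle is anticipated; the only delicate step is the block arithmetic in the inductive computation, which is made to work precisely by the maximality of $i$.
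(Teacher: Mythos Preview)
Your proof is correct and follows essentially the same route as the paper: induction on the lexicographic order for the identity $\e\mathbf{h}_\lambda=\mathbf{q}_\lambda$, then Green's Lemma in $\en F_n(G)$, and finally the transfer to $\ig(E)$ via the correspondence of $\ar$-classes under $\boldsymbol{\phi}$. One tiny remark: the inequality $u_{i-1}<u_i-1<u_i$ comes directly from the condition $u_i-u_{i-1}>1$ defining $i$, not from the maximality of $i$; the maximality is only there to make the recursive definition of $\mathbf{h}_\lambda$ unambiguous.
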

\begin{proof}
We prove  by induction on $(u_1,\hdots,u_r)$ that $ \e \mathbf{h}_{(u_1,\hdots,u_r)}=\mathbf{q}_{(u_1,\hdots,u_r)}$.
Clearly the statement is true for $(u_1,\hdots,u_r)=(1,\hdots,r)$. Suppose now result is true for all
$(v_1,\hdots,v_r)<(u_1,\hdots,u_r)$, so that
$$
\e \mathbf{h}_{(u_{1},\cdots, u_{i-1}, u_{i}-1, u_{i+1},\cdots, u_{r})}=\mathbf{q}_{(u_{1},\cdots, u_{i-1}, u_{i}-1,
u_{i+1},\cdots, u_{r})}.
$$
Since $x_{u_j}\alpha=x_{u_j}$ for all $j\in \{ 1,\hdots,r\}$ and $x_{u_i-1}\alpha=x_{u_i}$, it follows that
\begin{align*}
\e \mathbf{h}_{(u_1,\hdots,u_r)}&=\e \mathbf{h}_{(u_{1},\cdots, u_{i-1}, u_{i}-1, u_{i+1},\cdots, u_{r})}
\alpha_{(u_1,\hdots,u_r)}\\ &=\mathbf{q}_{(u_{1},\cdots, u_{i-1}, u_{i}-1, u_{i+1},\cdots,
u_{r})}\alpha_{(u_1,\hdots,u_r)}= \mathbf{q}_{(u_1,\hdots,u_r)} \end{align*} as required.

Since by definition, $\mathbf{q}_{(u_1,\hdots,u_r)} \in L_{(u_{1},\cdots, u_{r})}$, the result for $\en F_n(G)$ follows
from Green's Lemma (see, for example, \cite[Chapter II]{howie:1995}), and that for $\ig(E)$ by the comments in
Section~\ref{sec:pres}, namely, that the action of any generators $\overline{f}\in \overline{E}$ on an $\mathcal{H}$-class contained in the $\mathcal{R}$-class $\overline{e}$ in $\ig(E)$ is equivalent to the action of $f$ on the corresponding $\mathcal{H}$-class in $\en F_n(G)$. \end{proof}

It is a consequence of Lemma~\ref{lem:righttrans}  that $\{\mathbf{h}_{\lambda}:\lambda\in \Lambda\}$ forms the
required Schreier system for  a presentation ${\mathcal{P}}$ for $\overline{H}$. It remains to define the function
$\omega$: we do so by setting
 $\omega(i)=(l^{\mathbf{r}_i}_1,l^{\mathbf{r}_i}_2,\hdots,l^{\mathbf{r}_i}_r)=
(1,l^{\mathbf{r}_i}_2,\hdots,l^{\mathbf{r}_i}_r)$ for each $i\in I$.   Note that for any $i\in I$ we have
$\mathbf{q}_{\omega(i)}\mathbf{r}_i=\e$, i.e. $\mathbf{p}_{\omega(i),i}=\e$.

\begin{defn}\label{defn:ourpres}  Let
$\mathcal{P}=\langle F:\Sigma\rangle$ be the presentation of  $\overline{H}$ as in Theorem~\ref{them:pres}, where
$\omega$ and $\{ \mathbf{h}_{\lambda}:\lambda\in\Lambda\}$ are given as above.
\end{defn}

Without loss of generality, we  assume
that $\overline{H}$ {\em is} the group with presentation ${\mathcal P}$.

In later parts of this work we will be considering for a non-zero entry $\phi\in P$, which $i\in I, \lambda\in \Lambda$
yield $\phi=\mathbf{p}_{\lambda i}$. For this and other purposes it is convenient to define the notion of { district}.
For $i\in I$ we say that $\mathbf{r}_{i}$ lies in {\it district $(l_{1}^{\mathbf{r}_{i}}, l_{2}^{\mathbf{r}_{i}},
\cdots, l_{r}^{\mathbf{r}_{i}})$} (of course, $1=l_{1}^{\mathbf{r}_{i}}$). Note that the district of  $\bold{r}_{i}$ is determined by the kernel of the transformation $\overline{\bold{r}_{i}},$ and lying in the same district induces a partition of  $\Theta$.

Let us run an example, with $n=9$ and $r=3$ we  consider the following two partitions: $$P_1=\{\{1,2,8\},\{3,4,7\}, \{5,6,9\}\}, P_2=\{\{1,4,6\},\{3,2\},\{5,8,9,7\}\}.$$ Then for each of these partitions if we take the minimal entries in each class in both cases we get $1<3<5,$ so these two partitions determine the same district.

 The next lemma follows immediately from the definition of $\mathbf{r}_i,i\in I$.

\begin{lem}\label{property of r_{i}}
For any $i\in I$, if $\mathbf{r}_{i}$ lies in district $(1,l_{2},\cdots,l_{r})$, then $l_{s}\geq s$ for all $s\in
[1,r]$. Moreover, for $k\in[1,n]$, if  $x_{k}\mathbf{r}_{i}=ax_{j}$, then $k\geq l_j$, with $k>l_j$ if $a\neq 1_G$.
\end{lem}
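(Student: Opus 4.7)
The plan is to verify both assertions directly from the definitions of district, the partition $\{B_1,\dots,B_r\}$ of $[1,n]$ associated with $\ker\overline{\mathbf{r}_i}$, and the defining property of $\Theta$ from Corollary 2.6. Neither statement requires any machinery beyond these definitions, but we should be careful to spell out which minimum lies in which block.

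For the first assertion, recall that $l_1^{\mathbf{r}_i}<l_2^{\mathbf{r}_i}<\cdots<l_r^{\mathbf{r}_i}$ are the minima of the $r$ blocks $B_1,\dots,B_r$ of the kernel of $\overline{\mathbf{r}_i}$, listed in increasing order. Since these are $r$ strictly increasing positive integers with $l_1^{\mathbf{r}_i}=1$, a trivial induction (or simply counting) yields $l_s^{\mathbf{r}_i}\geq s$ for every $s\in[1,r]$.

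For the second assertion, suppose $x_k\mathbf{r}_i=ax_j$. Since the image of $\mathbf{r}_i$ restricted to $x_k$ lies in $Gx_j$, the index $k$ must lie in the kernel block $B_j^{\mathbf{r}_i}$ (this is essentially the observation that two generators $x_k,x_{k'}$ are sent into the same $Gx_j$ precisely when they lie in the same block of $\ker\overline{\mathbf{r}_i}$ whose image class is $j$). By definition $l_j^{\mathbf{r}_i}=\min B_j$, so $k\geq l_j^{\mathbf{r}_i}$.

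For the strict inequality, note that since $\mathbf{r}_i\in\Theta$, we have by definition $x_{l_j^{\mathbf{r}_i}}\mathbf{r}_i=x_j$, i.e.\ $w^{\mathbf{r}_i}_{l_j^{\mathbf{r}_i}}=1_G$. Hence if $k=l_j^{\mathbf{r}_i}$ then $a=1_G$. Contrapositively, $a\neq 1_G$ forces $k\neq l_j^{\mathbf{r}_i}$, and combined with $k\geq l_j^{\mathbf{r}_i}$ this yields $k>l_j^{\mathbf{r}_i}$, completing the proof. There is really no main obstacle here: the lemma is a direct unpacking of the definitions of $\Theta$, district, and the labelling convention for the blocks of $\ker\overline{\mathbf{r}_i}$.
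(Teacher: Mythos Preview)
Your proof is correct and follows essentially the same approach as the paper: both argue that $k$ lies in the same kernel block of $\overline{\mathbf{r}_i}$ as $l_j$, whence $k\geq l_j$, and both use $x_{l_j}\mathbf{r}_i=x_j$ (the defining property of $\Theta$) to conclude that $a\neq 1_G$ forces $k\neq l_j$. Your version simply spells out in more detail what the paper compresses into a couple of sentences.
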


\begin{proof}
It is clear that $l_s\geq s$ for all $s\in [1,r].$ If $x_{k}\bold{r}_{i}=ax_{j}$, then $k\geq l_j$ because $k$ and $l_j$ belong to the same kernel class of $\overline{\bold{r}_i}.$ If $a\neq 1_G$, then $x_k\bold{r}_i\neq x_j$ so $k\neq l_j$ and hence $k>l_j.$
\end{proof}

%\begin{proof} We have already made the first observation following Theorem~\ref{them:wreath}. For the remainder,
%we observe that by definition of $r_i$ we have $w^{r_i}_{l_j}=1_G$ for $j\in [1,r]$. If $x_{l}r_{i}=ax_{j}$,
% then $l\in L^{r_i}_j$ so that $l\geq l^{r_i}_{j}\geq j$ and $l=j$ implies $l=l^{r_i}_j$ so that $a=1_G$.
%\end{proof}

We pause to consider which elements of $H$ can occur as an entry $\phi$ of $P$; with abuse of terminology, we will say
that $\phi\in P$. As indicated before Lemma~\ref{lem:autfr}, we can write $\phi\in H$ as
\[\phi=\begin{pmatrix}x_1&x_2&\hdots&x_r\\a_1x_{1\overline{\phi}}&a_2x_{2\overline{\phi}}&\hdots &a_rx_{r\overline{\phi}}\end{pmatrix}\]
where $\overline{\phi}\in\mathcal{S}_r$ and $(a_1,\hdots,a_r)\in G^r$. If $\phi=\mathbf{p}_{\lambda i}\in P$, where
$\lambda=(u_1,\hdots, u_r)$ and $\mathbf{r}_i$ lies in district $(l_1,\hdots, l_r)$, then the $u_j$s and $l_k$s are
constrained by
\[1=l_1<l_2<\hdots <l_r,\,\, u_1<u_2<\hdots<u_r,\]
\[l_{j\overline{\phi}}\leq u_j\mbox{ for all }j\in [1,r]\mbox{ with }l_{j\overline{\phi}}<u_j\mbox{ if }a_j\neq 1_G,\]
and
\[l_k=u_j \mbox{~implies~} k=j\overline{\phi} \mbox{~and~} a_j=1_G \mbox{~for~all~} k, j\in [1,r].\]
Conversely, if these constraints are satisfied by $l_1,\hdots, l_r,u_1,\hdots, u_r\in [1,n]$ with respect to some
$\overline{\phi}\in \mathcal{S}_r$ and $(a_1,\hdots,a_r)\in G^r$, then it is easy to see that if $\xi\in\en F_n(G)$ is defined by
\[x_{l_k}\xi=x_k,\, x_{u_k}\xi=a_kx_{k\overline{\phi}},\, k\in [1,r]\]
and
\[x_j\xi=x_1\mbox{ for }j\notin \{ l_1,\hdots,l_r,u_1,\hdots,u_r\},\]
then $\xi=r_i$ for some $i\in I$, where $r_i$ lies in district $(l_1, l_2, \cdots, l_r).$
Clearly, $\mathbf{p}_{\lambda i}=\phi$.

\begin{lem}\label{lem:all} If $|G|>1$ then every element of $H$ occurs as an entry in $P$ if and only if $2r\leq n$.
If $|G|=1$ then every element of $H$ occurs as an entry in $P$ if and only if $2r\leq n+1$.
\end{lem}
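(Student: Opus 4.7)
The plan is to invoke the characterisation of entries of $P$ set up in the paragraph immediately preceding the statement, which reduces the lemma to a purely combinatorial existence problem. Writing $\phi\in H$ via its permutation $\overline{\phi}\in\mathcal{S}_r$ and coefficients $(a_1,\ldots,a_r)\in G^r$, $\phi$ occurs as some $\mathbf{p}_{\lambda i}$ exactly when one can find strictly increasing sequences $1=l_1<\cdots<l_r$ and $u_1<\cdots<u_r$ in $[1,n]$ with $l_{j\overline{\phi}}\leq u_j$ for each $j\in[1,r]$ (strict whenever $a_j\neq 1_G$), subject to the overlap rule that $l_k=u_j$ forces $k=j\overline{\phi}$ and $a_j=1_G$. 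Both directions of the lemma come from elementary analysis of this system.

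For sufficiency I would give explicit witnesses. If $|G|>1$ and $2r\leq n$, then for every $\phi$ take $l_k=k$ and $u_j=r+j$, so that $L=[1,r]$ and $U=[r+1,2r]$ are disjoint; then $l_{j\overline{\phi}}=j\overline{\phi}\leq r<r+j=u_j$ always (the strict clause is harmless) and the overlap rule is vacuous. If $|G|=1$ and $2r\leq n+1$, the $a_j$'s are all $1_G$, and given any $\overline{\phi}\in\mathcal{S}_r$ I would keep $l_k=k$ but set $u_1=1\overline{\phi}$ and $u_j=r+j-1$ for $j\geq 2$; then $u_1\leq r<r+1=u_2<\cdots<u_r=2r-1\leq n$, one checks $l_{j\overline{\phi}}\leq u_j$ (with equality only at $j=1$), and the unique overlap $l_{1\overline{\phi}}=u_1$ satisfies the overlap rule on the nose.

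For necessity I would exhibit a single obstructive $\phi$ in each setting. When $|G|>1$, pick $a\in G\setminus\{1_G\}$ and take $\phi$ with $\overline{\phi}=\mathrm{id}$ and $a_j=a$ for all $j$; the overlap rule then forbids $l_k=u_j$ for any $k,j$, forcing $L\cap U=\emptyset$ and hence $2r=|L|+|U|\leq n$. When $|G|=1$, take $\overline{\phi}$ to be the reversal $j\overline{\phi}=r+1-j$; the $j=1$ inequality becomes $l_r\leq u_1$, which together with the strict monotonicity of the $l$'s and $u$'s compels $l_1,\ldots,l_r,u_1,\ldots,u_r$ to account for at least $2r-1$ distinct values in $[1,n]$, whence $n\geq 2r-1$.

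I expect no serious obstacle: once the characterisation is invoked, the rest is bookkeeping. The only mild subtlety is in the $|G|=1$ sufficiency, where the symmetric ansatz $u_j=r+j-1$ would silently demand $\overline{\phi}(1)=r$, and one must break the symmetry by shifting only $u_1$ down to $1\overline{\phi}$ in order to accommodate an arbitrary $\overline{\phi}$.
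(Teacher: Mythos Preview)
Your proposal is correct and follows essentially the same approach as the paper: invoke the characterisation of entries of $P$ given just before the lemma, then for sufficiency exhibit the witnesses $(l_k)=(1,\ldots,r)$, $(u_j)=(r+1,\ldots,2r)$ (respectively $(u_1,u_2,\ldots,u_r)=(1\overline{\phi},r+1,\ldots,2r-1)$ when $|G|=1$), and for necessity exhibit a single obstructive element. The only deviation is your choice of obstruction in the $|G|>1$ case: you take $\overline{\phi}=\mathrm{id}$ with all $a_j\neq 1_G$ (forcing $\{l_k\}\cap\{u_j\}=\varnothing$ via the overlap rule), whereas the paper takes the reversal $\overline{\phi}:j\mapsto r+1-j$ with only $a_1\neq 1_G$ (forcing $l_r<u_1$ and hence again $2r$ distinct values); both work, and yours is arguably a shade cleaner since it avoids any appeal to the ordering between the $l$'s and $u$'s.
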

\begin{proof} Suppose first that $|G|>1$. If $2r\leq n$, then given any
$$\alpha=\begin{pmatrix}x_1&x_2&\hdots&x_r\\a_1x_{1\overline{\alpha}}&a_2x_{2\overline{\alpha}}&\hdots &a_rx_{r\overline{\alpha}}\end{pmatrix}$$
in $ H$, we can take
$(l_1,\hdots,l_r)=(1,2,\hdots,r)$ and $(u_1,\hdots,u_r)=(r+1,\hdots, 2r)$. Conversely, if
$2r>n$, then $\begin{pmatrix}x_1&x_2&\hdots &x_r\\ax_r&x_{r-1}&\hdots&x_1\end{pmatrix}$,
where $a\neq 1_G$, cannot lie in $P$, since we would need $l_{1\overline{\alpha}}=l_r<u_1$; by the pigeon-hole principle, this is not possible.

Consider now the case where $|G|=1$. If $2r\leq n+1$, then given any
$$\alpha=\begin{pmatrix}x_1&x_2&\hdots&x_r\\x_{1\overline{\alpha}}&x_{2\overline{\alpha}}&\hdots &x_{r\overline{\alpha}}\end{pmatrix}$$
in $ H$, let
$1\overline{\alpha}=t$ and choose
\[(l_1,\hdots,l_r)=(1,\hdots,r)\mbox{ and }(u_1,\hdots,u_r)=(t,r+1,\hdots, 2r-1).\]
It follows from the discussion preceding the lemma that $\alpha\in P$. Conversely, if
$2r>n+1$, then $\begin{pmatrix}x_1&x_2&\hdots &x_r\\x_r&x_{r-1}&\hdots&x_1\end{pmatrix}$
cannot lie in $P$, since now we would require $l_{1\overline{\alpha}}=l_r\leq u_1$.

\end{proof}

We are now in a position to outline the proof of our main theorem, Theorem~\ref{thm:n-2}, which states that
$\overline{H}$ is isomorphic to $H$, and hence to  $G\wr \mathcal{S}_r$.

We  first claim  that for any $i,j\in I$ and $\lambda,\mu\in \Lambda$, if $\mathbf{p}_{\lambda i}=\mathbf{p}_{\mu j}$,
then $f_{i,\lambda}=f_{j,\mu}$. We verify our claim via a series of steps. We first deal with the case where
$\mathbf{p}_{\lambda i}=\e$ and here show that $f_{i,\lambda}$ (and $f_{j,\mu}$) is the identity of $\overline{H}$
(Lemma~\ref{lem:idgens}). Next, we verify the claim in the case where $\mu=\lambda$ (Lemma~\ref{lem:easyform}) or $i=j$
(Lemma~\ref{lem:samecolumn}). We then show that for $r\leq \frac{n}{2}-1$, this is  sufficient (via finite induction)
to prove the claim holds in general (Lemma~\ref{lem:2r+1}). However, a counterexample shows that for larger $r$ this
strategy will fail.

To overcome the above problem, we begin by showing that if $\mathbf{p}_{\lambda i}=\mathbf{p}_{\mu j}$ is what we call a
{\em simple form}, that is,
\[\left(\begin{array}{ccccccccccc}
x_{1} & \cdots & x_{k-1} & x_{k} & x_{k+1} & \cdots & x_{k+m-1}&x_{k+m}&x_{k+m+1}&\hdots &x_{r} \\
x_{1} & \cdots & x_{k-1} & x_{k+1} & x_{k+2} & \cdots & x_{k+m}&ax_k&x_{k+m+1}&\hdots &x_r\end{array}\right),\] for
some $k\geq 1, m\geq 0, a\in G$, then $f_{i,\lambda}=f_{j,\mu}$. We then introduce the notion of {\em rising point} and
verify by induction on the rising point, with the notion of simple form forming the basis of our induction, that our
claim holds.  As a consequence of our claim  we denote a generator $f_{i,\lambda}$ with $\mathbf{p}_{\lambda i}=\phi$
by $f_{\phi}$.

For $r\leq \frac{n}{2}$ it is easy to see that every element of $H$ occurs as {\em some} $\mathbf{p}_{\lambda i}$ and
for $r\leq \frac{n}{3}$ we have enough room for manoeuvre (the reader studying Sections~\ref{sec:nonidgens} and 6 will come to
an understanding of what this means) to show that $f_\phi f_\varphi=f_{\varphi\phi}$ and it is then easy to see that
$\overline{H}\cong H$ (Theorem \ref{thm:n/3}).

To deal with the general case of  $r\leq n-2$ we face two problems. One is that for $r>\frac{n}{2}$, not every element
of $H$ occurs as some element of $P$ and secondly, we need more sophisticated techniques to show that the
multiplication in $\overline{H}$ behaves as we would like. To this end we show that $\overline{H}$ is generated by a restricted set of
elements $f_{i,\lambda}$, such that the corresponding $\mathbf{p}_{\lambda i}$ form a standard set of generators of $H$
(regarded as a wreath product). We then check that the corresponding identities to determine $G\wr S_r$ are satisfied
by these generators, and it is then a short step to obtain our goal, namely, that $\overline{H}\cong H$ (Theorem \ref{thm:n-2}). We note,
however, that even at this stage more care is required than, for example, in the corresponding situation for
$\mathcal{T}_n$ \cite{gray:2012a} or $\mathcal{PT}_n$ \cite{dolinka:2013}, since we cannot assume that $G$ is finite.
Indeed our particular choice of Schreier system will be seen to be a useful tool.

\section{Identity generators}\label{sec:identitygens}

As stated at the end of  Section~\ref{sec:pres}, our first step is to show that  if $(i,\lambda)\in K$ and
$\mathbf{p}_{\lambda i}=\e$, then $f_{i,\lambda}=1_{\overline{H}}$. Note that whenever we write $f_{i,\lambda}=1_{\overline{H}}$ we mean that this relation can be deduced from the relations in the presentation $\langle F: \Sigma\rangle.$ To this end we make use of our particular choice of
Schreier system and function $\omega$. The proof is by induction on $\lambda\in \Lambda$, where we recall that $\Lambda
$ is ordered lexicographically.

\begin{lem}\label{lem:idgens} For any $(i,\lambda)\in K$ with $\mathbf{p}_{\lambda i}=\e$, we have
$f_{i,\lambda}=1_{\overline{H}}$.\end{lem}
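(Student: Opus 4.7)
The plan is to prove the lemma by induction on $\lambda\in\Lambda$ with respect to the lexicographic order, exploiting the specific Schreier system $\{\mathbf{h}_\lambda\}$ and function $\omega$ chosen above. For the base case $\lambda=(1,2,\ldots,r)$, one has $\mathbf{q}_\lambda=\e$, so $\mathbf{p}_{\lambda i}=\mathbf{r}_i$, forcing $\mathbf{r}_i=\e$ and hence $i=1=\omega(1)$; relation (R2) then gives $f_{1,\lambda}=1_{\overline{H}}$.

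For the inductive step, I assume the result for all $\mu<\lambda$, and let $(i,\lambda)\in K$ with $\mathbf{p}_{\lambda i}=\e$, writing $\lambda=(u_1,\ldots,u_r)$ and $\omega(i)=(l_1,\ldots,l_r)$ (the district of $\mathbf{r}_i$). Unpacking the hypothesis yields $x_{u_k}\mathbf{r}_i=x_k$ for each $k$, i.e.\ $u_k\in B^{\mathbf{r}_i}_k$, $w^{\mathbf{r}_i}_{u_k}=1_G$, and $l_k\leq u_k$ by Lemma~\ref{property of r_{i}}. If $\lambda=\omega(i)$ then $u_k=l_k$ throughout and (R2) applies; otherwise pick the smallest index $k_0$ with $u_{k_0}>l_{k_0}$.

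Next I would handle the \emph{Schreier row} $l\in I$ determined by $\mathbf{h}_\lambda=\mathbf{h}_{\lambda^-}\alpha_\lambda$ with $\alpha_\lambda=\e_{l,\lambda}$. The idempotent $\alpha_\lambda$ has kernel blocks $B^{\mathbf{r}_l}_j=(u_{j-1},u_j]$ (with the last extending to $n$) and trivial $G$-values, so $x_{u_j}\mathbf{r}_l=x_j$ gives $\mathbf{p}_{\lambda l}=\e$ directly. Because the Schreier construction decrements the last coordinate preceded by a gap, the decremented entry of $\lambda^-$ still lies in the corresponding block of $\mathbf{r}_l$, and a parallel computation then shows $\mathbf{p}_{\lambda^- l}=\e$ as well. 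By the induction hypothesis $f_{l,\lambda^-}=1_{\overline{H}}$, and (R1) applied to $\mathbf{h}_{\lambda^-}\e_{l,\lambda}=\mathbf{h}_\lambda$ gives $f_{l,\lambda}=f_{l,\lambda^-}=1_{\overline{H}}$.

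When $i\neq l$, I would finish by exhibiting a singular square linking rows $i$ and $l$ at columns $\lambda$ and $\mu=(u_1,\ldots,u_{k_0-1},l_{k_0},u_{k_0+1},\ldots,u_r)$. By construction $\mu<\lambda$; moreover the minimality of $k_0$ forces $u_{k_0-1}=l_{k_0-1}<l_{k_0}$, placing $l_{k_0}\in(u_{k_0-1},u_{k_0})\subset B^{\mathbf{r}_l}_{k_0}$. Combined with $l_{k_0}$ being the minimum of $B^{\mathbf{r}_i}_{k_0}$, this yields $\mathbf{p}_{\mu i}=\mathbf{p}_{\mu l}=\e$, so the hypothesis of Corollary~\ref{lem:ssindr} reads $\mathbf{p}_{\lambda i}^{-1}\mathbf{p}_{\lambda l}=\e=\mathbf{p}_{\mu i}^{-1}\mathbf{p}_{\mu l}$, and (R3) delivers $f_{i,\lambda}^{-1}f_{i,\mu}=f_{l,\lambda}^{-1}f_{l,\mu}$. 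Invoking the induction hypothesis for $\mu<\lambda$ gives $f_{i,\mu}=f_{l,\mu}=1_{\overline{H}}$, and combined with $f_{l,\lambda}=1_{\overline{H}}$ from the previous paragraph we deduce $f_{i,\lambda}=1_{\overline{H}}$. I expect the main technical step to be the verification $l_{k_0}>u_{k_0-1}$, which is the key combinatorial fact ensuring $\mathbf{p}_{\mu l}=\e$; this is precisely what the choice of $k_0$ as the smallest index of discrepancy delivers.
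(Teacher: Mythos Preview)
Your overall strategy is exactly the one the paper uses: induction on $\lambda$ in the lexicographic order, first handling the Schreier row $l$ via (R1) and the inductive hypothesis at $\lambda^-$, and then linking an arbitrary row $i$ to $l$ by a singular square at the column $\mu=(u_1,\ldots,u_{k_0-1},l_{k_0},u_{k_0+1},\ldots,u_r)$ determined by the smallest index of discrepancy. The inductive step is correct and essentially identical to the paper's argument.

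There is, however, a small slip in your base case. From $\lambda=(1,2,\ldots,r)$ you correctly have $\mathbf{q}_\lambda=\e$, but the conclusion $\mathbf{p}_{\lambda i}=\mathbf{r}_i$ and hence $\mathbf{r}_i=\e$, $i=1$, is not right: $\mathbf{p}_{\lambda i}=\e$ only says $x_k\mathbf{r}_i=x_k$ for $k\in[1,r]$, while $\mathbf{r}_i$ can act arbitrarily (within $\Theta$) on $x_{r+1},\ldots,x_n$, so there are many $i$ with this property. The correct deduction is that $l_k^{\mathbf{r}_i}=k$ for all $k$, whence $\omega(i)=(1,\ldots,r)=\lambda$, and then (R2) gives $f_{i,\lambda}=f_{i,\omega(i)}=1_{\overline H}$. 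With this correction your base case and the paper's coincide.
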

\begin{proof} After Lemma 3.7 we have already noted that $\bold{p}_{\omega(i),i}=\bold{q}_{\omega(i)}\bold{r}_i=\e$ for all $i\in I.$  If $\mathbf{p}_{(1,2,\hdots ,r)i}=\e$, that is, $\mathbf{q}_{(1, 2,\cdots,r)}\mathbf{r}_{i}=\e$, then
by definition of  $\mathbf{q}_{(1, 2,\cdots,r)}$ we have $x_1\mathbf{r}_i=x_1,\cdots, x_r\mathbf{r}_i=x_r$. Hence
$\mathbf{r}_{i}$ lies in district $(1,2,\cdots,r)$, so that $\omega(i)=(1,2,\cdots,r)$. Condition (R2) of the
presentation  $\mathcal{P}$ now gives that $f_{i,(1,2,\cdots,r)}=f_{i,\omega(i)}=1_{\overline{H}}$.

Suppose now that $\mathbf{p}_{(u_1,u_2,\hdots,u_r)i}=\e$ where $(1,2,\hdots, r)<(u_1,u_{2},\cdots,u_{r})$. We make the
inductive assumption  that for any $(v_1,v_{2},\cdots,v_{r})<(u_1,u_{2},\cdots,u_{r})$, if
$\mathbf{p}_{(v_1,v_{2},\cdots,v_{r})l}=\e$, for any $l\in I$, then $f_{l,(v_1,v_2,\hdots,v_r)}=1_{\overline{H}}$.

With $u_0=0$, pick the  largest number, say $j$, such that $u_{j}-u_{j-1}>1$. By our choice of Schreier words, we have
$$\mathbf{h}_{(u_1,u_{2},\cdots, u_{r})}=\mathbf{h}_{(u_1,u_{2},\cdots, u_{j-1}, u_{j}-1, u_{j+1},\cdots, u_{r})}\alpha_{(u_1,u_{2},\cdots, u_{r})},$$ where $\alpha_{(u_1,u_{2},\cdots, u_{r})}=\e_{l (u_1,u_2,\hdots, u_r)}$.

 %then $$ \mathbf{h}_{(1,u_{2},\cdots, u_{r})}=\mathbf{h}_{(1,u_{2},\cdots, u_{j-1}, u_{j}-1, u_{j+1},\cdots, u_{r})}e_{l,(1,u_{2},\cdots, u%_{r})}$$ and

By definition,
\begin{align*}
\mathbf{r}_{l}=\left(\begin{array}{ccccccccccc}
x_{1} & \hdots &x_{u_1}&x_{u_1+1} & \cdots & x_{u_{2}}  & \cdots  & x_{u_{r-1}+1} & \cdots  \\
x_{1} & \hdots&x_1&x_{2} & \cdots & x_{2}   & \cdots  & x_{r} & \cdots
\end{array}\right. & \\
\left.\begin{array}{ccccc}
\cdots & x_{u_{r}}&x_{u_r+1}& \cdots & x_{n} \\
\cdots & x_{r} &x_r& \cdots & x_{r}
\end{array}\right).&
\end{align*}
By choice of $j$ we have $u_{j-1}<u_{j}-1<u_{j}$ so that $x_{u_{j}-1}\mathbf{r}_{l}=x_{j}$,
giving \[\mathbf{p}_{(u_1,u_{2},\cdots, u_{j-1}, u_{j}-1, u_{j+1},\cdots, u_{r})l}=\e.\] Since $$(u_1,u_{2},\cdots,
u_{j-1}, u_{j}-1, u_{j+1},\cdots, u_{r})<(u_1,u_{2},\cdots, u_{j-1},u_{j},u_{j+1},\cdots,u_{r}),$$
 we call upon our inductive hypothesis to obtain $f_{l,(u_1,u_{2},\cdots, u_{j-1}, u_{j}-1, u_{j+1},\cdots, u_{r})}=1_{\overline{H}}$.  On the other hand, we have $f_{l,(u_1,u_{2},\cdots,u_{r})}=f_{l,(u_1,u_{2},\cdots, u_{j-1}, u_{j}-1, u_{j+1},\cdots, u_{r})}$ by
 (R1), and so we conclude that $f_{l,(u_1,u_{2},\cdots,u_{r})}=1_{\overline{H}}$.

Suppose that $\mathbf{r}_{i}$ lies in district $(l_1,l_{2},\cdots,l_{r})$. Since
$\mathbf{q}_{(u_1,u_{2},\cdots,u_{r})}\mathbf{r}_{i}=\e$, we have $x_{u_{k}}\mathbf{r}_{i}=x_{k}$, so that $l_{k}\leq
u_{k}$ by the definition of districts, for all $k\in [1,r]$. If $l_k=u_k$ for all $k\in [1,r]$, then $f_{i,(u_1,\hdots,
u_r)}= f_{i,\omega(i)}=1_{\overline{H}}$  by $\mathcal{P}$. Otherwise, we let $m$ be smallest such that   $l_{m}<u_{m}$
and so (putting $u_0=l_0=0$)  we have $u_{m-1}=l_{m-1}<l_{m}<u_{m}$. Clearly $(u_1,u_{2},\cdots,
u_{m-1},l_{m},u_{m+1},\cdots ,u_{r})\in\Lambda$ and as  $u_{m-1}<l_{m}<u_{m}$, we have $x_{l_{m}}\mathbf{r}_{l}=x_{m}$
by the definition of $\mathbf{r}_{l}$. We  thus have  the matrix equality
$$\left(\begin{array}{cc}
\mathbf{q}_{(u_1,u_{2},\cdots,u_{r})}\mathbf{r}_{l} & \mathbf{q}_{(u_1,u_{2},\cdots,u_{r})}\mathbf{r}_{i}\\
\mathbf{q}_{(u_1,u_{2},\cdots, u_{m-1}, l_{m}, u_{m+1},\cdots,u_{r})}\mathbf{r}_{l} & \mathbf{q}_{(u_1,u_{2},\cdots,
u_{m-1}, l_{m}, u_{m+1},\cdots,u_{r})}\mathbf{r}_{i}\end{array}\right)=\left(\begin{array}{cc}
\e & \e\\
\e & \e \end{array}\right).$$

Remember that we have already proven $f_{l,(u_1,u_{2},\cdots,u_{r})}=1_{\overline{H}}$. Furthermore, as $l_{m}<u_{m}$
by assumption, $$(u_1,u_{2},\cdots, u_{m-1}, l_{m}, u_{m+1},\cdots,u_{r})<(u_1,u_{2},\cdots,
u_{m-1},u_{m},u_{m+1},\cdots,u_{r}),$$  so that induction gives that  $$f_{i, (u_1,u_{2},\cdots, u_{m-1}, l_{m},
u_{m+1},\cdots,u_{r})}=f_{l,(u_1,u_{2},\cdots, u_{m-1}, l_{m}, u_{m+1},\cdots,u_{r})}=1_{\overline{H}}.$$ From (R3) we
deduce that $f_{i,(u_1,u_{2},\cdots,u_{r})}=1_{\overline{H}}$ and the proof is completed.
\end{proof}

\section{Generators corresponding to  the same columns or rows, and connectivity}\label{sec:nonidgens}

The first aim of this section is to show that if $\mathbf{p}_{\lambda i}=\mathbf{p}_{\mu j}\neq 0$ where $\lambda =\mu$
or $i=j$, then $f_{i,\lambda}=f_{j,\mu}$. We begin with the more straightforward case, where $i=j$.

\begin{lem}\label{lem:easyform} If $\mathbf{p}_{\lambda i}=\mathbf{p}_{\mu i}$, then
$f_{i,\lambda}=f_{i,\mu}$.
\end{lem}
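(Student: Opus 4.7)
The plan is to reduce to Lemma 4.1 via the relation (R3), in the form given by Corollary 3.6. If I can exhibit $k\in I$ with $\mathbf{p}_{\lambda k}=\mathbf{p}_{\mu k}=\e$, then Lemma 4.1 gives $f_{k,\lambda}=f_{k,\mu}=1_{\overline{H}}$, and the premise $\mathbf{p}_{\lambda i}^{-1}\mathbf{p}_{\lambda k}=\mathbf{p}_{\mu i}^{-1}\mathbf{p}_{\mu k}$ of Corollary 3.6 becomes $\mathbf{p}_{\lambda i}^{-1}=\mathbf{p}_{\mu i}^{-1}$, which holds by assumption. Hence (R3) would yield $f_{i,\lambda}^{-1}f_{i,\mu}=f_{k,\lambda}^{-1}f_{k,\mu}=1_{\overline{H}}$, so $f_{i,\lambda}=f_{i,\mu}$, as required.

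Writing $\lambda=(u_1,\ldots,u_r)$ and $\mu=(v_1,\ldots,v_r)$, the condition $\mathbf{q}_\lambda\mathbf{r}_k=\e$ amounts to demanding $x_{u_j}\mathbf{r}_k=x_j$ with coefficient $1_G$ for every $j\in[1,r]$, and similarly for $\mu$. I would therefore define $\mathbf{r}_k$ by prescribing the kernel partition $B_j=\{u_j,v_j\}$ for $j\in[2,r]$ and letting $B_1$ consist of the remaining points of $[1,n]$; since $u_j,v_j\geq 2$ whenever $j\geq 2$, the block $B_1$ automatically contains $1$ as well as $u_1$ and $v_1$. On each block, send every $x_t$ with $t\in B_j$ to $x_j$ with coefficient $1_G$. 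The resulting element sits in $\Theta$, with block minima $1=l_1<l_2<\cdots<l_r$, the strict inequalities following from $\min B_j\leq u_j<u_{j+1}$ (and the parallel bound from the $v$'s). So $\mathbf{r}_k$ is a legitimate representative of some class $k\in I$, and by construction both $\mathbf{q}_\lambda\mathbf{r}_k$ and $\mathbf{q}_\mu\mathbf{r}_k$ equal $\e$.

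The only obstacle is to check that the proposed blocks $B_j$ are pairwise disjoint, and this is precisely where the hypothesis $\mathbf{p}_{\lambda i}=\mathbf{p}_{\mu i}$ is used. Monotonicity of the coordinates of $\lambda$ and $\mu$ rules out $u_j=u_{j'}$ or $v_j=v_{j'}$ for $j\neq j'$; and an overlap $u_j=v_{j'}$ with $j\neq j'$ would force $x_j\mathbf{p}_{\lambda i}=x_{u_j}\mathbf{r}_i=x_{v_{j'}}\mathbf{r}_i=x_{j'}\mathbf{p}_{\mu i}$, contradicting the fact that the common value $\mathbf{p}_{\lambda i}=\mathbf{p}_{\mu i}\in H$ permutes (up to $G$-action) the generators $x_1,\ldots,x_r$. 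Once disjointness is secured, the construction above delivers the required $k$ and the argument is complete.
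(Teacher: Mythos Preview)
Your proof is correct and follows essentially the same route as the paper. You construct the auxiliary element $\mathbf{r}_k$ by sending $x_{u_j},x_{v_j}\mapsto x_j$ (coefficient $1_G$) and everything else to $x_1$, exactly as the paper does; the paper phrases well-definedness by observing that $u_j,v_j$ lie in the same $\ker\overline{\mathbf{r}_i}$-class (so distinct $j$ give distinct classes), which is equivalent to your disjointness argument, and then both proofs invoke Lemma~\ref{lem:idgens} together with (R3) to conclude.
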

\begin{proof} Let
$\lambda= (u_{1},\cdots,u_{r})$ and $\mu =(v_1,\hdots, v_r)$. By hypothesis we have that
$\mathbf{q}_{(u_{1},\cdots,u_{r})}\mathbf{r}_{i}=\mathbf{q}_{(v_1,\cdots, v_r)}\mathbf{r}_{i}=\psi\in H$. By definition
of the $\mathbf{q}_{\lambda}$s we have $x_{u_j}\mathbf{r}_i=x_{v_j}\mathbf{r}_i$ for $1\leq j\leq r$, and as $\rank
\mathbf{r}_i=r$ it follows that $u_j,v_j\in B^{\mathbf{r}_i}_{j'}$ where $j\mapsto j'$ is a bijection of $[1,r]$. We
now define $\alpha\in \en F_n(G)$ by setting $x_{u_{j}}\alpha=x_{j}=x_{v_j}\alpha$ for all $j\in [1,r]$ and
$x_{p}\alpha=x_{1}$ for all $p\in [1,n]\setminus \{ u_1,\cdots, u_r,v_1,\cdots v_r\}$.

Clearly $\alpha\in D_r$, indeed $\alpha\in L_1$. Since $w^{\alpha}_m=1_G$ for all $m\in [1,n]$ and min$\{
u_j,v_j\}<$min$\{ u_k, v_k\}$ for $1\leq j<k\leq r$, we certainly have that $\alpha=\mathbf{r}_l$ for some $l\in I$. By
our choice of $\mathbf{r}_l$ we have the matrix equality

 $$\left(\begin{array}{cc}
\mathbf{q}_{(u_{1},\cdots, u_{r})}\mathbf{r}_{i} & \mathbf{q}_{(u_{1},\cdots, u_{r})}\mathbf{r}_{l}\\
\mathbf{q}_{(v_1,\cdots, v_r)}\mathbf{r}_{i} & \mathbf{q}_{(v_1,\cdots, v_r)}\mathbf{r}_{l}
\end{array}\right)=\left(\begin{array}{cc}
\psi & \e\\
\psi & \e \end{array}\right).$$ Using Lemma~\ref{lem:idgens} and (R3) of the presentation $\mathcal{P}$, we obtain
$$f_{i,(u_{1},\cdots,u_{r})}=f_{i,(v_1,\cdots,v_r)}$$ as required.
\end{proof}

We need more effort for the case $\mathbf{p}_{\lambda i}=\mathbf{p}_{\lambda j}$. For this purpose we introduce the
following notions of `bad' and `good' elements.

For any $i,j\in I$, suppose that $\mathbf{r}_{i}$ and $\mathbf{r}_{j}$ lie in districts $(1,k_{2},\cdots, k_{r})$ and
$(1,l_{2},\cdots,l_{r})$, respectively. We call $u\in [1,n]$ a mutually {\it  bad} element of $\mathbf{r}_{i}$ with
respect to $\mathbf{r}_{j}$, if there exist $m,s\in [1,r]$ such that $u=k_{m}=l_{s}$, but $m\neq s$; all other elements
are said to be mutually {\em good} with respect to $\mathbf{r}_i$ and $\mathbf{r}_j$. We call $u$  a bad element of
$\mathbf{r}_{i}$ with respect to $\mathbf{r}_{j}$ because, from the definition of districts,  $\mathbf{r}_{i}$  maps
$x_{k_{m}}$ to $x_{m}$, and similarly, $\mathbf{r}_{j}$ maps $x_{l_{s}}$ to $x_{s}$. Hence, if $u=k_{m}=l_{s}$ is bad,
then it is impossible for us to find some $\mathbf{r}_{t}$ to make both $\mathbf{r}_{i}$ and $\mathbf{r}_{j}$  `happy'
in the point $x_u$, that is, for $\mathbf{r}_{t}$ (or, indeed, any other element of $\en F_n(G)$) to agree with both
$\mathbf{r}_{i}$ and $\mathbf{r}_{j}$ on $x_u$.

 Notice that if $m$ is the minimum subscript such that $u=k_{m}$ is a bad element of $\mathbf{r}_{i}$ with respect to $\mathbf{r}_{j}$ and $k_{m}=l_{s}$, then $s$ is also the minimum subscript such that $l_{s}$ is a  bad element of $\mathbf{r}_{j}$ with respect to $\mathbf{r}_{i}$. For, if $l_{s'}<l_{s}$ is a bad element of $\mathbf{r}_{j}$ with respect to $\mathbf{r}_{i}$, then by definition we have some $k_{m'}$ such that $l_{s'}=k_{m'}$ where $s'\neq m'$. By the minimality of  $m$, we have $m'>m$ and so $l_{s'}=k_{m'}>k_{m}=l_{s}$, a contradiction.
 We also remark that since $l_1=k_1=1$, the maximum possible number of bad elements  is $r-1$.

Let us run a simple example. Let $n=7$ and $r=4$, and suppose $\mathbf{r}_{i}$ lies in district $(1,3,4,6)$ and
$\mathbf{r}_{j}$ lies in district $(1,4,6,7)$. By definition,  $x_{4}\mathbf{r}_i=x_{3}$ and $x_{6}\mathbf{r}_i=x_{4}$,
while $x_{4}\mathbf{r}_j=x_{2}$ and $x_{6}\mathbf{r}_j=x_{3}$. Therefore,  $\mathbf{r}_{i}$ and $\mathbf{r}_{j}$ differ
on $x_{4}$ and $x_{6}$, so  that we say $4$ and $6$ are bad elements of $\mathbf{r}_{i}$ with respect to
$\mathbf{r}_{j}$.

\begin{lem}\label{lem:goodset}
For any $i,j\in I$, suppose that $\mathbf{r}_{i}$ and $\mathbf{r}_{j}$ lie in districts $(1,k_{2},\cdots,k_{r})$ and
$(1,l_{2}, \cdots,l_{r})$, respectively. Let
$\mathbf{q}_{(u_{1},\cdots,u_{r})}\mathbf{r}_{i}=\mathbf{q}_{(u_{1},\cdots,u_{r})}\mathbf{r}_{j}=\psi\in H$. Suppose
$\{1,l_{2},\cdots,l_{s}\}$ is a  set of good elements of $\mathbf{r}_{i}$ with respect to $\mathbf{r}_{j}$ such that
$1<l_{2}<\cdots<l_{s}<k_{s+1}<\cdots<k_{r}$. Then there exists $ p\in I$ such that
$\mathbf{r}_p$ lies in district $(1,l_{2},\cdots,l_{s}, k_{s+1},\cdots, k_{r})$,
$\mathbf{q}_{(u_{1},\cdots,u_{r})}\mathbf{r}_{p}=\psi$ and $f_{p,(u_{1},\cdots,u_{r})}=f_{i,(u_{1},\cdots,u_{r})}$.
Further, if $s=r$ then we can take $p=j$.
\end{lem}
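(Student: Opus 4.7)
My plan is to construct $\mathbf{r}_p$ explicitly and then deduce the equality $f_{i,(u_1,\ldots,u_r)}=f_{p,(u_1,\ldots,u_r)}$ via an application of relation $(R3)$ of $\mathcal{P}$, together with Lemma~\ref{lem:idgens} to dispose of the resulting identity generators.

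I will define $\alpha\in\en F_n(G)$ on the free generators by combining the behaviours of $\mathbf{r}_i$ and $\mathbf{r}_j$: set $x_{l_t}\alpha=x_t$ for $t\in[1,s]$ (matching $\mathbf{r}_j$ on the good-set part of its district), $x_{k_t}\alpha=x_t$ for $t\in[s+1,r]$ (matching $\mathbf{r}_i$ on the retained part of its district), $x_{u_t}\alpha=x_t\psi$ for $t\in[1,r]$ (so that $\mathbf{q}_{(u_1,\ldots,u_r)}\alpha=\psi$), and additionally $x_{k_t}\alpha=x_t$ for each $t\in[1,s]$ wherever this is compatible with the target district; send all remaining $x_k$ to $x_1$.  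Consistency of this prescription rests on three ingredients: the goodness of $\{1,l_2,\ldots,l_s\}$ with respect to $\mathbf{r}_i$ (so that $l_t=k_{t'}$ forces $t=t'$), the inequality $l_s<k_{s+1}$ (separating the two halves of the district), and the equality $x_{u_{t'}}\mathbf{r}_i=x_{u_{t'}}\mathbf{r}_j$ (handling any $u_{t'}$ coinciding with a district element).

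Setting $d_t=l_t$ for $t\in[1,s]$ and $d_t=k_t$ for $t\in[s+1,r]$, the hypothesis gives $d_1<d_2<\cdots<d_r$.  To confirm that $d_t$ is indeed the minimum of the $t$-th kernel class of $\overline\alpha$, I apply Lemma~\ref{property of r_{i}} to both $\mathbf{r}_i$ and $\mathbf{r}_j$: from $x_{u_{t'}}\mathbf{r}_i=x_{u_{t'}}\mathbf{r}_j=x_{t'}\psi\in Gx_{t'\overline\psi}$ I obtain $u_{t'}\geq k_{t'\overline\psi}$ and $u_{t'}\geq l_{t'\overline\psi}$, hence $u_{t'}\geq d_{t'\overline\psi}$.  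Thus $\alpha\in\Theta$ with district $(d_1,\ldots,d_r)$; write $\alpha=\mathbf{r}_p$.  The additional prescription $x_{k_t}\alpha=x_t$ for $t\in[1,s]$ then yields $\mathbf{q}_{\omega(i)}\mathbf{r}_p=\e$, so that the $E$-square
\[
\left(\begin{array}{cc}\mathbf{p}_{(u_1,\ldots,u_r),i}&\mathbf{p}_{(u_1,\ldots,u_r),p}\\ \mathbf{p}_{\omega(i),i}&\mathbf{p}_{\omega(i),p}\end{array}\right)=\left(\begin{array}{cc}\psi&\psi\\ \e&\e\end{array}\right)
\]
satisfies the cross-ratio condition of $(R3)$; combining $(R3)$ with $(R2)$ and Lemma~\ref{lem:idgens} gives $f_{i,(u_1,\ldots,u_r)}=f_{p,(u_1,\ldots,u_r)}$.

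For the special case $s=r$, the target district is $(1,l_2,\ldots,l_r)$ and the prescribed action of $\alpha$ agrees with $\mathbf{r}_j$ on every $l_t$ and every $u_{t'}$; Lemma~\ref{lem:kernels} then forces $\ker\alpha=\ker\mathbf{r}_j$, so we may take $p=j$.  The technical core of the argument, and the main obstacle, is the district verification: making sure that the many prescribed values cohere into an element of $\Theta$ whose kernel-block minima are exactly $d_1,\ldots,d_r$ (with none smaller), which is where the goodness hypothesis and the inequality $l_s<k_{s+1}$ do their essential work.
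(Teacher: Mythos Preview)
Your one-step construction contains a genuine gap. You want your element $\alpha=\mathbf{r}_p$ to satisfy simultaneously (a) the district condition $l^{\mathbf{r}_p}_t=l_t$ for $t\in[1,s]$, and (b) $x_{k_t}\mathbf{r}_p=x_t$ for every $t\in[1,r]$ so that $\mathbf{q}_{\omega(i)}\mathbf{r}_p=\e$. These requirements are incompatible whenever $k_t<l_t$ for some $t\in[1,s]$: condition (b) would force the minimum of the $t$-th kernel block to be at most $k_t<l_t$, contradicting (a). Nothing in the hypotheses prevents this; for instance with $r=3$, $s=2$, $(k_1,k_2,k_3)=(1,2,4)$ and $(l_1,l_2,l_3)=(1,3,4)$, the element $l_2=3$ is good (it is not among the $k_m$) and $l_2<k_3$, yet $k_2=2<3=l_2$. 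In this situation no $\mathbf{r}_p$ in district $(1,3,4)$ can have $x_2\mathbf{r}_p=x_2$, so $\mathbf{p}_{\omega(i),p}$ is not $\e$ (indeed it has rank $<r$), and your $E$-square collapses. Your hedge ``wherever this is compatible with the target district'' acknowledges the tension but does not resolve it: once you omit those values of $t$, the claim $\mathbf{q}_{\omega(i)}\mathbf{r}_p=\e$ no longer follows.

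The paper circumvents this by a two-step argument. It first builds an intermediate $\mathbf{r}_t$ that retains \emph{all} of $x_{k_m}\mapsto x_m$ together with $x_{l_m}\mapsto x_m$ for $m\le s$; this $\mathbf{r}_t$ lies in district $(\min(k_1,l_1),\ldots,\min(k_s,l_s),k_{s+1},\ldots,k_r)$, and the row $\omega(i)=(1,k_2,\ldots,k_r)$ gives the $(\e,\e,\psi,\psi)$ square linking $i$ to $t$. It then builds the desired $\mathbf{r}_p$ (essentially your $\alpha$ without the extra $k_t$ prescriptions) and links $t$ to $p$ using the row $(1,l_2,\ldots,l_s,k_{s+1},\ldots,k_r)$, on which both $\mathbf{r}_t$ and $\mathbf{r}_p$ give $\e$. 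Your $s=r$ argument inherits the same defect, and in addition the appeal to Lemma~\ref{lem:kernels} does not yield $\ker\alpha=\ker\mathbf{r}_j$: agreement on $\{l_t,u_{t'}\}$ says nothing about the remaining coordinates, where $\alpha$ is sent to $x_1$ while $\mathbf{r}_j$ need not be. The paper handles $s=r$ by a separate square, using the row $(1,l_2,\ldots,l_r)$ to link $\mathbf{r}_t$ directly to $\mathbf{r}_j$.
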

\begin{proof} We begin by defining $\alpha\in D_r$, starting by setting $x_{k_m}\alpha=x_m$, $m\in [1,r]$. Now for
$m\in[1,s]$ we put $x_{l_m}\alpha=x_m$. Notice that for $1\leq m\leq s$, if $k_{m'}=l_m$ for $m'\in[1,r]$, then by the
goodness of $\{1,l_{2},\cdots,l_{s}\}$ we have that $m'=m$. We now set $x_{u_m}\alpha=x_{u_m}\mathbf{r}_i$ for $m\in
[1,r]$. Again, we need to check we are not violating well-definedness. Clearly we need only check the case where
$u_m=l_{m'}$ for some $m'\in[1,s]$, since here we have already defined $x_{l_{m'}}\alpha=x_{m'}$. We now use the fact
that by our hypothesis, $x_{u_m}\mathbf{r}_i= x_{u_m}\mathbf{r}_j$ for all $m\in [1,r]$, so that
$x_{u_m}\mathbf{r}_i=x_{u_m}\mathbf{r}_j=x_{l_{m'}}\mathbf{r}_j=x_{m'}$. Finally, we set $x_m\alpha=x_1$, for all $m\in
[1,n]\setminus \{ 1,l_1,\cdots, l_s, k_2,\cdots, k_r,u_1,\cdots, u_r\}$.

We claim that $\alpha=\mathbf{r}_t$ for some $t\in I$. First, it is clear from the definition that $\alpha\in D_r$,
indeed, $\alpha\in L_1$. We  also have that for $1\leq m\leq s$, $x_{l_m}\alpha=x_{k_m}\alpha=x_m$ and also for
$s+1\leq m\leq r$, $x_{k_m}\alpha=x_m$. We claim that for $m\in [1,s]$ we have $l^{\alpha}_m=v_m$ where $v_m=\mbox{ min
}\{k_m,l_m\}$ and for $m\in [s+1,n]$ we have $l^{\alpha}_m=k_m$. It is clear that $1=l^{\alpha}_1$. Suppose that for
$m\in [2,r]$ we have $x_{u_q}\alpha=ax_m$. By definition, $x_{u_q}\mathbf{r}_i=ax_m=x_{u_q}\mathbf{r}_j$, so that
$k_m,l_m\leq u_q$ and our claim holds. It is now clear that $\alpha=\mathbf{r}_t$ for some $t\in I$ and lies in
district $(v_1,\cdots, v_s,k_{s+1},\cdots, k_r)$.

Having constructed $\mathbf{r}_t$, it is immediate that $$\left(\begin{array}{cc}
\mathbf{q}_{(1,k_{2},\cdots,k_{r})}\mathbf{r}_{i} & \mathbf{q}_{(1,k_{2},\cdots,k_{r})}\mathbf{r}_{t}\\
\mathbf{q}_{(u_{1},u_{2},\cdots,u_{r})}\mathbf{r}_{i} & \mathbf{q}_{(u_{1},u_{2},\cdots,u_{r})}\mathbf{r}_{t}
\end{array}\right)=\left(\begin{array}{cc}
\epsilon & \epsilon\\
\psi & \psi \end{array}\right),$$ so that in view of Corollary~\ref{lem:ssindr} and (R3) we deduce that
$$f_{i,(u_{1},u_{2}\cdots,u_{r})}=f_{t,(u_{1},u_{2},\cdots,u_{r})}.$$

Notice now that if $s=r$ then
$$\left(\begin{array}{cc}
\mathbf{q}_{(1,l_{2},\cdots,l_{r})}\mathbf{r}_{t} & \mathbf{q}_{(1,l_{2},\cdots,l_{r})}\mathbf{r}_{j}\\
\mathbf{q}_{(u_{1},u_{2},\cdots,u_{r})}\mathbf{r}_{t} & \mathbf{q}_{(u_{1},u_{2},\cdots,u_{r})}\mathbf{r}_{j}
\end{array}\right)=\left(\begin{array}{cc}
\epsilon & \epsilon\\
\psi & \psi \end{array}\right),$$ which leads to $f_{t,(u_{1},u_{2},\cdots,u_{r})}=f_{j,(u_{1},u_{2},\cdots,u_{r})}$, and so that $f_{j,(u_{1},\cdots,u_{r})}=f_{i,(u_{1},\cdots,u_{r})}$ as required.

Without the assumption that $s=r$, we now define $\mathbf{r}_p$ in a similar, but slightly more straightforward way, to
$\mathbf{r}_t$. Namely, we first define $\beta\in \en F_n(G)$ by putting $x_{l_m}\beta=x_m$ for $m\in [1,s],
x_{k_m}\beta=x_m$ for $m\in [s+1,r]$, $x_{u_m}\beta=x_{u_m}\mathbf{r}_i$ for $m\in [1,r]$ and $x_m\beta=x_1$ for $m\in
[1,n]\setminus \{ 1,l_1,\cdots, l_s, k_{s+1},\cdots, k_r,u_1,\cdots, u_r\}$. It is easy to check that
$\beta=\mathbf{r}_p$ where $\mathbf{r}_p$ lies in district $(1,l_2,\cdots,l_s,k_{s+1},\cdots, k_r)$. Moreover, we have
$$\left(\begin{array}{cc}
\mathbf{q}_{(1,l_{2},\cdots,l_{s},k_{s+1},\cdots,k_{r})}\mathbf{r}_{t} & \mathbf{q}_{(1,l_{2},\cdots,l_{s},k_{s+1},\cdots,k_{r})}\mathbf{r}_{p}\\
\mathbf{q}_{(u_{1},u_{2},\cdots,u_{r})}\mathbf{r}_{t} & \mathbf{q}_{(u_{1},u_{2},\cdots,u_{r})}\mathbf{r}_{p}
\end{array}\right)=\left(\begin{array}{cc}
\epsilon & \epsilon\\
\psi & \psi \end{array}\right),$$ which leads to $f_{t,(u_{1},u_{2},\cdots,u_{r})}=f_{p,(u_{1},u_{2},\cdots,u_{r})}$,
and so to $f_{p,(u_{1},\cdots,u_{r})}=f_{i,(u_{1},\cdots,u_{r})}$ as required.\end{proof}

\begin{lem}\label{lem:samecolumn}
If  $\mathbf{p}_{\lambda i}=\mathbf{p}_{\lambda j}$, then $f_{i,\lambda}=f_{j,\lambda}$.
\end{lem}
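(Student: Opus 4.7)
The plan is the following. Place $\mathbf{r}_i$ in district $(1, k_2, \ldots, k_r)$ and $\mathbf{r}_j$ in district $(1, l_2, \ldots, l_r)$, set $\lambda = (u_1, \ldots, u_r)$, and write $\psi = \mathbf{p}_{\lambda i} = \mathbf{p}_{\lambda j}$, which forces $x_{u_a}\mathbf{r}_i = x_{u_a}\mathbf{r}_j$ for each $a\in[1,r]$.

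The starting observation is that every mutually bad element $u$ of $\mathbf{r}_i$ with respect to $\mathbf{r}_j$ must lie outside $\lambda$: if $u = k_m = l_{m'}$ with $m\neq m'$, then $x_u\mathbf{r}_i = x_m \neq x_{m'} = x_u\mathbf{r}_j$, which forbids $u = u_a$ for any $a$. Hence all problematic indices live in the complement of $\lambda$, leaving room in which a bridging element can be constructed without disturbing the action of $\mathbf{r}_i$ on $\lambda$.

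If there are no bad elements between $\mathbf{r}_i$ and $\mathbf{r}_j$, then $\{1, l_2, \ldots, l_r\}$ is already a set of good elements of $\mathbf{r}_i$ with respect to $\mathbf{r}_j$, and the strict increase $1 < l_2 < \cdots < l_r$ holds by definition of district. The final clause of Lemma~\ref{lem:goodset}, taken with $s = r$, then yields $p = j$ and hence $f_{i,\lambda} = f_{j,\lambda}$ at once.

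For the general case I bridge through an auxiliary $\mathbf{r}_p \in \Theta$ whose district $(1, m_2, \ldots, m_r)$ is chosen so that $\{m_2, \ldots, m_r\}$ is disjoint from $\{k_2, \ldots, k_r\} \cup \{l_2, \ldots, l_r\}$; this leaves position $1$ as the only index where $\mathbf{r}_p$'s district meets either original district, so both pairs $(\mathbf{r}_i, \mathbf{r}_p)$ and $(\mathbf{r}_p, \mathbf{r}_j)$ are free of bad elements. Imitating the construction in the proof of Lemma~\ref{lem:goodset}, I define $\mathbf{r}_p$ by setting $x_{m_a}\mathbf{r}_p = x_a$, $x_{u_a}\mathbf{r}_p = x_{u_a}\mathbf{r}_i$ for every $a\in[1,r]$, and $x_q\mathbf{r}_p = x_1$ otherwise; a routine kernel-class check shows the prescriptions are consistent, place $\mathbf{r}_p$ in district $(1, m_2, \ldots, m_r)$, and satisfy $\mathbf{q}_\lambda \mathbf{r}_p = \psi$. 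Applying the no-bad-elements case twice then gives $f_{i,\lambda} = f_{p,\lambda} = f_{j,\lambda}$.

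The principal obstacle, and the step requiring the most care, is the construction of the bridging district: one must simultaneously avoid the up to $2(r-1)$ forbidden positions coming from the two districts, maintain the strict increase, and either steer clear of the coordinates of $\lambda$ or match them compatibly with $\overline{\psi}$ and the $G$-entries of $\psi$. Having enough unused indices in $[1,n]$ to accomplish this is precisely the point at which the numerical hypotheses on $r$ relative to $n$ enter the picture.
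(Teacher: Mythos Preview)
Your treatment of the base case (no mutually bad elements) is correct and matches the paper's use of Lemma~\ref{lem:goodset} with $s=r$. The problem is in your general step. You propose to bridge via some $\mathbf{r}_p$ whose district $(1,m_2,\ldots,m_r)$ is disjoint from both $\{k_2,\ldots,k_r\}$ and $\{l_2,\ldots,l_r\}$, and you yourself note that this requires ``enough unused indices in $[1,n]$'' and that ``this is precisely the point at which the numerical hypotheses on $r$ relative to $n$ enter the picture.'' But Lemma~\ref{lem:samecolumn} carries \emph{no} such hypothesis: it must hold for every $1\leq r\leq n-2$, and it is invoked later in the paper (via Corollary~\ref{coro:connected}) for values of $r$ close to $n-2$. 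For such $r$ the set $[2,n]$ need not contain $r-1$ elements avoiding the up to $2(r-1)$ district coordinates together with the compatibility constraints coming from $\lambda$; a fresh bridging district simply need not exist. So as written your argument proves a weaker statement than the one required.

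The paper avoids this difficulty by not seeking a district disjoint from both. Instead it inducts on the number of mutually bad elements: taking the minimal bad index $l_s=k_m$ (with, say, $s>m$), it observes that $l_1,\ldots,l_{s-1}$ are all good and that $l_{s-1}<l_s=k_m\leq k_{s-1}<k_s$, so the hybrid tuple $(1,l_2,\ldots,l_{s-1},k_s,\ldots,k_r)$ is strictly increasing. Lemma~\ref{lem:goodset} then produces $\mathbf{r}_p$ in that district with $f_{p,\lambda}=f_{i,\lambda}$, and one checks that $\mathbf{r}_p$ has \emph{strictly fewer} bad elements with respect to $\mathbf{r}_j$ than $\mathbf{r}_i$ did (the element $l_s$ is no longer bad, and no new bad elements among $l_{s+1},\ldots,l_r$ are created). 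The point is that the intermediate district is assembled entirely from the coordinates already present in the two original districts, so no counting of spare indices in $[1,n]$ is needed and the argument goes through for all $r\leq n-2$.
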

\begin{proof}
Suppose that $\mathbf{r}_{i}$ and $\mathbf{r}_{j}$ lie in districts $(1,k_{2},\cdots,k_{r})$ and $(1,l_{2},
\cdots,l_{r})$, respectively. Let $\lambda=(u_1,\hdots, u_r)$ so that
 $\mathbf{q}_{(u_{1},\cdots,u_{r})}\mathbf{r}_{i}=\mathbf{q}_{(u_{1},\cdots,u_{r})}\mathbf{r}_{j}=\psi\in H$ say.  We proceed by induction on the number of mutually bad elements. If this is 0, then the result holds by Lemma~\ref{lem:goodset}.
We make the inductive assumption that if $\mathbf{p}_{\lambda l}=\mathbf{p}_{\lambda t}$ and
$\mathbf{r}_l,\mathbf{r}_t$ have $k-1$ bad elements, where $0< k\leq r-1$, then $f_{l,\lambda}=f_{t,\lambda}$.

Suppose now that $\mathbf{r}_{j}$ has $k$ bad elements with respect to $\mathbf{r}_{i}$.  Let $s$ be the smallest
subscript such that $l_{s}$ is bad element of $\mathbf{r}_{j}$ with respect to $\mathbf{r}_{i}$. Then there exists some
$m$ such that $l_{s}=k_{m}$. Note, $m$ is also the smallest subscript such that $k_{m}$ is bad, as  we explained
before. Certainly $s,m>1$; without loss of generality, assume $s>m$. Then $1=l_1,l_{2}, \cdots, l_{s-1}$ are all good
elements and $1<l_{2}<\cdots<l_{s-1}<k_{s}<\cdots<k_{r}$. By Lemma \ref{lem:goodset}, there exists $p\in I$ such that
$\mathbf{r}_p$ lies in district $(1,l_{2},\cdots,l_{s-1}, k_{s},\cdots, k_{r})$,
$\mathbf{q}_{(u_{1},\cdots,u_{r})}\mathbf{r}_{p}=\psi$ and $f_{p,(u_{1},\cdots,u_{r})}=f_{i,(u_{1},\cdots,u_{r})}$.

We consider the sets $B$ and $C$ of  mutually bad elements of $\mathbf{r}_j$ and $\mathbf{r}_p$, and of $\mathbf{r}_j$
and $\mathbf{r}_i$, respectively. Clearly $B\subseteq \{ l_s,l_{s+1},\cdots,l_r\}$. We have $l_s=k_m<k_s$, so that
$l_s\notin B$.   On the other hand if $l_q\in B$ where $s+1\leq  q\leq r$, then we must have $l_q=k_{q'}$ for some
$q'\geq s$ with $q'\neq q$, so that $l_q\in C$. Thus $|B|<|C|=k$. Our inductive hypothesis now gives that
$f_{p,(u_{1},\cdots,u_{r})}=f_{j,(u_{1},\cdots,u_{r})}$ and we deduce that
$f_{i,(u_{1},\cdots,u_{r})}=f_{j,(u_{1},\cdots,u_{r})}$ as required.\end{proof}

\begin{defn}\label{defn:connected} Let $i,j\in I$ and $\lambda,\mu\in \Lambda$
such that $\mathbf{p}_{\lambda i}=\mathbf{p}_{\mu j}$. We say that $(i,\lambda),(j,\mu)$ are {\em connected} if there
exist
\[i=i_0,i_1,\hdots, i_m=j\in I\mbox{ and }\lambda=\lambda_0,\lambda_1,\hdots, \lambda_m=\mu\in \Lambda\]
such that for $0\leq k< m$ we have $\mathbf{p}_{\lambda_k i_k}=\mathbf{p}_{\lambda_k, i_{k+1}}=\mathbf{p}_{\lambda_{k+1}i_{k+1}}$.
\end{defn}

The following picture illustrates that $(i,\lambda)=(i_0,\lambda_0)$ is connected to $(j,\mu)=(i_m,\lambda_m)$:

\begin{center}
\begin{tikzpicture}[scale=1.0]
\node (a) at (-6,3) {$\mathbf{p}_{\lambda_0 i_0}$}; \node (b) at (-4,3) {$\mathbf{p}_{\lambda_0 i_1}$}; \node (c) at (-4,2)
{$\mathbf{p}_{\lambda_1 i_1}$}; \node (d) at (-2,2) {$\mathbf{p}_{\lambda_1 i_2}$}; \node (e) at (-2,1) {}; \node (f)
at (0,1) {}; \node (g) at (0,0) {$\mathbf{p}_{\lambda_{m-1} i_{m-1}}$}; \node (h) at (3,0) {$\mathbf{p}_{\lambda_{m-1}
i_{m}}$}; \node (k) at (3,-1) {$\mathbf{p}_{\lambda_{m} i_{m}}$}; \path[-,font=\scriptsize,>=angle 60] (a) edge
node[above]{} (b) (b) edge node[right]{} (c) (c) edge node[right]{} (d); \path[-,dashed,font=\scriptsize,>=angle 60] (d) edge node[above]{} (e)
(e) edge node[right]{} (f) (f) edge node[above]{} (g); \path[-,font=\scriptsize,>=angle 60] (g) edge node[right]{} (h) (h) edge node[right]{} (k);
\end{tikzpicture}
\end{center}

Lemmas~\ref{lem:easyform} and ~\ref{lem:samecolumn} now yield:

\begin{coro}\label{coro:connected} Let $i,j\in I$ and $\lambda,\mu\in \Lambda$ be such that  $\mathbf{p}_{\lambda i}=\mathbf{p}_{\mu j}$
where  $(i,\lambda),(j,\mu)$ are
{ connected}. Then $f_{i,\lambda}=f_{j,\mu}$.
\end{coro}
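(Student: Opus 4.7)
The plan is straightforward: Corollary \ref{coro:connected} should follow by a simple induction along the connecting chain, invoking Lemmas \ref{lem:easyform} and \ref{lem:samecolumn} alternately at each step. There is no real obstacle here, since all the work has been done in the two preceding lemmas; the corollary merely composes their conclusions.

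More precisely, I would fix the chain $i=i_0,i_1,\hdots,i_m=j$ and $\lambda=\lambda_0,\lambda_1,\hdots,\lambda_m=\mu$ provided by Definition \ref{defn:connected}, and then argue by induction on $m$. The base case $m=0$ is trivial. For the inductive step, observe that at the $k$-th link we have two equalities of entries of $P$:
\[
\mathbf{p}_{\lambda_k i_k}=\mathbf{p}_{\lambda_k i_{k+1}} \qquad\text{and}\qquad \mathbf{p}_{\lambda_k i_{k+1}}=\mathbf{p}_{\lambda_{k+1} i_{k+1}}.
\]
The first equality is between entries in the same column (indexed by $\lambda_k$), so Lemma \ref{lem:samecolumn} gives $f_{i_k,\lambda_k}=f_{i_{k+1},\lambda_k}$. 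The second is between entries in the same row (indexed by $i_{k+1}$), so Lemma \ref{lem:easyform} gives $f_{i_{k+1},\lambda_k}=f_{i_{k+1},\lambda_{k+1}}$. Concatenating these identities along $k=0,1,\hdots,m-1$ yields
\[
f_{i,\lambda}=f_{i_0,\lambda_0}=f_{i_1,\lambda_0}=f_{i_1,\lambda_1}=\cdots=f_{i_m,\lambda_{m-1}}=f_{i_m,\lambda_m}=f_{j,\mu},
\]
which is precisely the required equality in $\overline{H}$.

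Since each step uses only one of the two previously established lemmas, no new combinatorial or semigroup-theoretic input is required. The only mild subtlety to keep in mind is that at each intermediate node the corresponding $\mathcal{H}$-class $H_{i_k\lambda_k}$ really is a group, i.e.\ $(i_k,\lambda_k)\in K$, so that $f_{i_k,\lambda_k}$ is an honest generator of the presentation $\mathcal{P}$; but this is guaranteed by the very hypothesis $\mathbf{p}_{\lambda_k i_k}\neq 0$, which holds along the chain since each $\mathbf{p}_{\lambda_k i_k}$ equals the common non-zero value $\mathbf{p}_{\lambda i}=\mathbf{p}_{\mu j}\in H$.
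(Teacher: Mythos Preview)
Your proposal is correct and is essentially the same approach as the paper's: the paper simply states that the corollary is yielded by Lemmas~\ref{lem:easyform} and~\ref{lem:samecolumn}, and your explicit induction along the connecting chain is precisely the intended unpacking of that sentence. The only remark is terminological: in the sandwich matrix $P=(\mathbf{p}_{\lambda i})$ the index $\lambda$ labels rows, so the name ``samecolumn'' refers to the egg-box picture of the $\mathcal{D}$-class rather than to $P$; but you have matched each equality with the correct lemma, so this does not affect the argument.
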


\section{The result for restricted $r$}\label{sec:restricted}

We are now in a position to finish the proof of our first main result, Theorem~\ref{thm:n/3}, in a relatively straightforward way. Of course, in view of Theorem~\ref{thm:n-2}, it is  not strictly necessary to provide such a proof here. However, the  techniques used  will be useful in the remainder of the paper.

Let $\alpha=\mathbf{p}_{\lambda i}\in P$ and suppose that $\lambda=(u_1,\cdots,u_r)$ and $\mathbf{r}_i$ lies in
district $(l_1,\cdots,l_r).$ Define $U(\lambda,i)=\{l_1, \cdots, l_r, u_1,\cdots, u_r\}$ and
$S(\lambda,i)=[1,n]\setminus U(\lambda, i)$.

\bigskip\noindent{\bf Step D: moving $l$s down:} Suppose that $l_j<t<l_{j+1}$ and
$t\in S(\lambda,i)$.  Define $\mathbf{r}_k$ by
\[ x_t\mathbf{r}_k=x_{j+1}\mbox{ and }x_s\mathbf{r}_j=x_s\mathbf{r}_i\mbox{ for }s\neq t.\]
It is easy to see that $\mathbf{r}_k\in\Theta$, $\mathbf{p}_{\lambda i}=\mathbf{p}_{\lambda k}$ and $\mathbf{r}_k$ lies
in district $$(l_1,\hdots, l_j,t,l_{j+2},\hdots ,l_r).$$ Clearly, $(i, \lambda)$ is connected to $(k, \lambda).$

\bigskip\noindent
{\bf Step U: moving $u$s up:} Suppose that $u_j<t<u_{j+1}$ or $u_r<t$, where $t\in S(\lambda,i)$. Define $\mathbf{r}_m$
by
\[x_t\mathbf{r}_m=x_{u_{j}}\mathbf{r}_i\mbox{ and }x_s\mathbf{r}_m=x_s\mathbf{r}_i\mbox{ for }s\neq t. \]
It is easy to see that $\mathbf{r}_m\in\Theta$, $\mathbf{p}_{\lambda i}=\mathbf{p}_{\lambda m}$ and $\mathbf{r}_m$ lies
in district $(l_1,l_2,\hdots ,l_r)$. Let $\mu=(u_1,\hdots, u_{j-1},t,u_{j+1},\hdots, u_r)$. Clearly,
$\mathbf{p}_{\lambda m}=\mathbf{p}_{\mu m}$ so that $(i,\lambda)$ is connected to $(m,\mu)$.

\bigskip\noindent
{\bf Step U$'$: moving $u$s down:}  Suppose that $t<u_{j+1}$ and
 $[t,u_{j+1})\subseteq S(\lambda,i)$.
Define $\mathbf{r}_l$ by
\[x_t\mathbf{r}_l=x_{u_{j+1}}\mathbf{r}_i\mbox{ and }x_s\mathbf{r}_m=x_s\mathbf{r}_i\mbox{ for }s\neq t. \]
It is easy to see that $\mathbf{r}_l\in\Theta$, $\mathbf{p}_{\lambda i}=\mathbf{p}_{\lambda l}$. Further,
 $\mathbf{r}_l$ lies in district
$(l_1,l_2,\hdots ,l_r)$ unless $u_{j+1}=l_{(j+1)\overline{\alpha}}$, in which case $l_{(j+1)\overline{\alpha}}$ is replaced by
 $t$.  Let $$\mu=(u_1,\hdots, u_{j},t,u_{j+2},\hdots, u_r);$$ clearly, $\mathbf{p}_{\lambda l}=\mathbf{p}_{\mu l}$,
 so that $(i,\lambda)$ is connected to $(l,\mu)$.

%\bigskip\noindent
%{\bf Step S: swapping} Suppose that $u_j<l_k$ and $(u_j,l_k)\cap S(\lambda,i)\neq\varnothing$, where
%$l_{k-1}<u_j$.  As in Step U we can find
%$r_m$ in district $(l_1,l_2,\hdots ,l_r)$ such that $\mathbf{p}_{\lambda i}=
%\mathbf{p}_{\lambda m}=\mathbf{p}_{\mu m}$ where $\mu=(u_1, \cdots, u_{j-1}, l_{k}-1, u_{j+1}, \cdots, u_r)$. We certainly
%have $l_{k-1}<u_j<l_k$ and $u_j\in S(\mu,m)$ so that by Step D we can find $r_s$ in district
%$(l_1,\hdots, l_{k-1},u_j,l_{k+1},\hdots,l_r)$ with $\mathbf{p}_{\mu m}=\mathbf{p}_{\mu s}$. Note that in moving from the pair
%$(\lambda,i)$ to $(\mu,s)$,
%the pair $(\mu,s)$ has one fewer occurrence of an entry $\mu$ (not equal to some $l_j^s$) being less than some
%$l_t^s$.

\begin{lem}\label{lem:2r+1}
Suppose that $n\geq 2r+1$. Let $\lambda=(u_1,\cdots,u_r)\in \Lambda$, and $i\in I$ with $\mathbf{p}_{\lambda i}\in H$.
Then $(i,\lambda)$ is connected to $(j,\mu)$ for some $j\in I$ and  $\mu=(n-r+1, \cdots,n)$. Consequently, if
$\mathbf{p}_{\lambda i}=\mathbf{p}_{\nu k}$ for any $i,k\in I$ and $\lambda,\nu\in \Lambda$, then $
f_{i,\lambda}=f_{k,\nu}$.
\end{lem}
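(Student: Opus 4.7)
The plan is to first establish the main claim that $(i,\lambda)$ is connected to some $(j,\mu_0)$ with $\mu_0=(n-r+1,\ldots,n)$; the ``Consequently'' conclusion will then drop out routinely.

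\textit{Reduction of the second statement to the first.} Given $(i,\lambda)$ and $(k,\nu)$ with $\mathbf{p}_{\lambda i}=\mathbf{p}_{\nu k}$, apply the main claim to obtain connections from $(i,\lambda)$ to $(j,\mu_0)$ and from $(k,\nu)$ to $(j',\mu_0)$; necessarily $\mathbf{p}_{\mu_0 j}=\mathbf{p}_{\lambda i}=\mathbf{p}_{\nu k}=\mathbf{p}_{\mu_0 j'}$. Lemma~\ref{lem:samecolumn} then yields $f_{j,\mu_0}=f_{j',\mu_0}$, and Corollary~\ref{coro:connected} gives $f_{i,\lambda}=f_{j,\mu_0}$ and $f_{k,\nu}=f_{j',\mu_0}$, whence $f_{i,\lambda}=f_{k,\nu}$.

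\textit{Main claim.} I would push the entries $u_r,u_{r-1},\ldots,u_1$ of $\lambda$ to their targets $n,n-1,\ldots,n-r+1$ in this order, using Step~U, with Step~D used as needed to clear $l$-obstructions. Write $\phi=\mathbf{p}_{\lambda i}$ with associated permutation $\overline{\phi}\in\mathcal{S}_r$, and let $(l_1,\ldots,l_r)$ denote the district of $\mathbf{r}_i$. The constraints derived just before Lemma~\ref{lem:all} give $l_{\overline{\phi}(j)}\leq u_j$ for every $j\in[1,r]$. A first useful consequence: if $u_r<n$ then $n\notin L$, for otherwise $l_p=n$ forces $u_{\overline{\phi}^{-1}(p)}\geq n$, contradicting $u_r<n$. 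Hence Step~U can always bring $u_r$ up to $n$ in a single move.

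\textit{Inductive step.} Suppose $u_{j+1},\ldots,u_r$ have been brought to $n-r+j+1,\ldots,n$ and $u_j<n-r+j$. The key structural observation is: if $l_p\in(u_j,n-r+j]$, then $p=\overline{\phi}(q)$ for some $q>j$. Indeed, with $q=\overline{\phi}^{-1}(p)$ we have $l_p\leq u_q$, so $u_q>u_j$ forces $q>j$. If no such $l_p$ exists, then $n-r+j\in S(\lambda,i)$ and a single Step~U brings $u_j$ to $n-r+j$. Otherwise $l_p=n-r+j$ for some $p=\overline{\phi}(q)$ with $q>j$; I would first apply Step~D with parameter $p-1$ to move $l_p$ to a position $t\in(l_{p-1},n-r+j)\cap S(\lambda,i)$, possibly preceded by Step~D operations on lower-indexed $l$'s to enlarge the available intervals.

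\textit{Main obstacle.} The technical crux is verifying that the required Step~D preparations can always be carried out; here is where $n\geq 2r+1$ is essential. It guarantees $|S(\lambda,i)|\geq n-2r\geq 1$ at all stages, and a careful audit of which positions of the critical subinterval $(l_{p-1},n-r+j)$ are occupied (using again the constraint $l_{\overline{\phi}(j)}\leq u_j$ to bound the number of small $u$'s that can sit there) produces the required free slot, possibly after a chain of preparatory Step~D moves cascading from lower indices upward. Iterating the whole push from $j=r$ down to $j=1$ eventually brings $\lambda$ to $\mu_0$ and completes the argument.
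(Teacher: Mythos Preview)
Your reduction of the second statement to the first is correct and matches the paper's implicit reasoning.

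For the main claim, your approach differs substantially from the paper's. You attempt a direct constructive push of each $u_j$ to its target $n-r+j$, handling obstructions via cascading Step~D moves. The paper instead introduces the weight
\[
W(\lambda,i)=\sum_{k=1}^r(u_k-l_k)
\]
and argues by (reverse) induction: whenever $W(\lambda,i)<T$ (the maximum, attained precisely when $\lambda=\mu_0$ and the district is $(1,\ldots,r)$), one can connect $(i,\lambda)$ to a pair with strictly larger weight. The case analysis is short: if $u_r<n$ then $u_r+1\in S(\lambda,i)$ (since every $l_k\le u_r$), so Step~U applies; if $u_r=n$ then $S(\lambda,i)\neq\varnothing$ by $n\ge 2r+1$, and any element of it either lies below some $l_k$ (Step~D applies), or above some $u_k$ (Step~U applies), or else $S(\lambda,i)\subseteq(l_r,u_1)$, forcing $W=T$.

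Your ``Main obstacle'' paragraph is a genuine gap. You acknowledge that the cascading Step~D preparations need verification but do not carry this out; the appeal to a ``careful audit'' is not a proof, and it is not clear how to organise such a cascade and guarantee termination without implicitly reintroducing a weight or potential function. The paper's device sidesteps this entirely: it never commits to a specific target slot for any $u_j$ or $l_k$, only to increasing $W$, and the hypothesis $n\ge 2r+1$ is invoked exactly once (to ensure $S(\lambda,i)\neq\varnothing$). I would abandon the targeted-push strategy in favour of the weight argument.
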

\begin{proof}  Suppose that $\bold{r}_i$ lies in district $(l_1, \cdots, l_r).$ For the purposes of this proof, let $$W(\lambda,i)=\sum^{r}_{k=1}(u_k-l_k);$$ clearly
$W(\lambda,i)$ takes greatest value $T$ where $$(l_1,\hdots,l_r)=(1,\hdots,r)\mbox{~and~}
(u_1,\hdots, u_r)=(n-r+1,\hdots, n).$$ Of course, here $W(\lambda,i)$ can be a negative integer, however, it has a minimal value that it can attain, i.e. is bounded below. We verify our claim by finite induction, with starting point $T$, under the reverse of the usual ordering on $\mathbb{Z}$. We have remarked that our result holds if $W(\lambda,i)=T$.

Suppose now that $W(\lambda,i)<T$ and the result is true for all pairs $(\nu,l)$ where
$W(\lambda,i)<W(\nu,l)\leq T$.

If $u_r<n$, then as certainly $l_{r}\leq u_{r\overline{\mathbf{p}_{\lambda i}}^{-1}}\leq u_r$, we can apply Step U to
show that $(i,\lambda)$ is connected to $(l, \nu)$ where $\nu=(u_1,\hdots,u_{r-1},u_r+1)$ and $\mathbf{r}_l$ lies in
district $(l_1,\hdots,l_r)$. Clearly $W(\lambda,i)<W(\nu,l)$.

Suppose that $u_r=n$. We know that $l_1=1$, and by our hypothesis that $2r+1\leq n$, certainly
$S(\lambda,i)\neq\varnothing$. If there exists $t\in S(\lambda,i)$ with $t<l_w$ for some $w\in [1,r]$, then choosing
$k$ with $l_k<t<l_{k+1}$, we have by Step D that $(i,\lambda)$ is connected to $(l, \lambda)$, where $\mathbf{r}_l$ lies
in district $(l_1,\hdots, l_k,t,l_{k+2},\hdots,l_r)$; clearly then $W(\lambda,i)<W(\lambda,l)$. On the other hand, if
there exists $t\in S(\lambda,i)$ with $u_w<t$ for some $w\in [1,r]$, then now choosing $k\in [1,r]$ with
$u_k<t<u_{k+1}$, we use Step U to show that $(i,\lambda)$ is connected to $(m,\nu)$ where
$\nu=(u_1,\hdots,u_{k-1},t,u_{k+1},\hdots, u_r)$, and $\mathbf{r}_m$ lies in district $(l_1,\hdots,l_r)$. Again,
$W(\lambda,i)<W(\nu,m)$.

The only other possibility is that $S(\lambda,i)\subseteq (l_r,u_1)$, in which case, $W(\lambda,i)=T$, a contradiction.
\end{proof}

In view of Lemma~\ref{lem:2r+1} and Lemma ~\ref{lem:all}, we may define, for $r\leq \frac{n-1}{2}$ and $\phi\in H$, an element $f_{\phi}\in
\overline{H}$, where $f_{\phi}=f_{i, \lambda}$ for some (any) $(i,\lambda)\in K$ with $\mathbf{p}_{\lambda i}=\phi$.

\begin{lem} \label{lem:morphism} Let $r\leq n/3$. Then
for any $\phi,\theta\in H$, we have $f_{\phi\theta}=f_{\theta}f_{\phi}$ and $f_{\phi^{-1}}=f_{\phi}^{-1}$.
\end{lem}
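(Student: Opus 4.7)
The plan is to build a singular square whose four sandwich-matrix entries realise the quadruple $(\epsilon,\theta,\phi,\phi\theta)$, and then read off the desired identity from (R3) as rephrased in Corollary~\ref{lem:ssindr}. Concretely, suppose I can find $i_1,i_2\in I$ and $\lambda_1,\lambda_2\in\Lambda$ with
\[
\mathbf{p}_{\lambda_2 i_1}=\epsilon,\quad \mathbf{p}_{\lambda_2 i_2}=\theta,\quad \mathbf{p}_{\lambda_1 i_1}=\phi,\quad \mathbf{p}_{\lambda_1 i_2}=\phi\theta.
\]
Then $\mathbf{p}_{\lambda_1 i_1}^{-1}\mathbf{p}_{\lambda_1 i_2}=\theta=\mathbf{p}_{\lambda_2 i_1}^{-1}\mathbf{p}_{\lambda_2 i_2}$, so (R3) immediately gives $f_{i_1,\lambda_1}^{-1}f_{i_1,\lambda_2}=f_{i_2,\lambda_1}^{-1}f_{i_2,\lambda_2}$. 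Since $r\leq n/3$ forces $n\geq 2r+1$, Lemma~\ref{lem:2r+1} lets me write the four generators unambiguously as $f_\phi,f_\epsilon,f_{\phi\theta},f_\theta$; Lemma~\ref{lem:idgens} then collapses $f_\epsilon$ to $1_{\overline{H}}$, and the equation rearranges to $f_{\phi\theta}=f_\theta f_\phi$. The second assertion $f_{\phi^{-1}}=f_\phi^{-1}$ drops out at once by specialising to $\theta=\phi^{-1}$, using $1_{\overline{H}}=f_\epsilon=f_{\phi\phi^{-1}}=f_{\phi^{-1}}f_\phi$.

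The only substantive step is the construction of the data $(i_1,\lambda_1),(i_2,\lambda_2)$, and this is precisely where the bound $r\leq n/3$ enters, by guaranteeing three pairwise-disjoint intervals $[1,r]$, $[r+1,2r]$, $[2r+1,3r]$ inside $[1,n]$. The plan is to take $\lambda_2=(r+1,\ldots,2r)$ and $\lambda_1=(2r+1,\ldots,3r)$, and to define $\mathbf{r}_{i_1},\mathbf{r}_{i_2}\in\Theta$ both with district $(1,2,\ldots,r)$ (so acting as the identity on $x_1,\ldots,x_r$) by specifying their action on the two remaining blocks. Writing the $s$th columns of $\phi,\theta,\phi\theta$ as $a_sx_{s\overline\phi}$, $b_sx_{s\overline\theta}$, $a_sb_{s\overline\phi}x_{s\overline\phi\overline\theta}$ respectively, I set $x_{r+s}\mathbf{r}_{i_1}=x_s$ and $x_{2r+s}\mathbf{r}_{i_1}=a_sx_{s\overline\phi}$, while $x_{r+s}\mathbf{r}_{i_2}=b_sx_{s\overline\theta}$ and $x_{2r+s}\mathbf{r}_{i_2}=a_sb_{s\overline\phi}x_{s\overline\phi\overline\theta}$, sending everything beyond $x_{3r}$ to $x_1$. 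A routine inspection of the kernel classes shows that both elements genuinely have district $(1,2,\ldots,r)$ and lie in $\Theta$; reading off the action on $x_1,\ldots,x_r$ for the four composites $\mathbf{q}_{\lambda_a}\mathbf{r}_{i_b}$ then produces exactly the required quadruple.

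Two background points underpin the plan: Lemma~\ref{lem:all} (in the regime $2r\leq n$) ensures that every $\phi\in H$ actually appears as some $\mathbf{p}_{\lambda i}$, so that $f_\phi$ is defined on all of $H$, and Lemma~\ref{lem:2r+1} (via $n\geq 2r+1$) makes it independent of the representative. The main obstacle, mild as it is, is purely combinatorial: arranging the three $r$-blocks so that all four sandwich entries come out simultaneously correct and, crucially, so that the kernel-block minima for both $\mathbf{r}_{i_1}$ and $\mathbf{r}_{i_2}$ remain in $[1,r]$ (giving the claimed district). The block layout above is tailored exactly for this, and it is precisely this ``room to manoeuvre'' that disappears once $r>n/3$, motivating the more elaborate machinery needed in the remainder of the paper to push the result through to $r\leq n-2$.
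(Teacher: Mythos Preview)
Your proposal is correct and follows essentially the same approach as the paper: both exploit the three disjoint blocks $[1,r]$, $[r+1,2r]$, $[2r+1,3r]$ to build two elements of $\Theta$ in district $(1,\ldots,r)$ whose sandwich entries at $\lambda=(r+1,\ldots,2r)$ and $\mu=(2r+1,\ldots,3r)$ form the singular quadruple $(\epsilon,\theta,\phi,\phi\theta)$, then read off $f_{\phi\theta}=f_\theta f_\phi$ from (R3) and deduce $f_{\phi^{-1}}=f_\phi^{-1}$ by specialising. The only cosmetic difference is that the paper places $\phi$ on the second block and $\epsilon$ on the third (for its $\mathbf{r}_l$), whereas you swap these roles for your $\mathbf{r}_{i_1}$; the resulting $2\times 2$ blocks are the same up to relabelling.
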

\begin{proof}  Since $n\geq 3$ and $r\leq n/3$ we deduce that $2r+1\leq n$.
 Define $\mathbf{r}_{i}$ by
 \[x_{j}\mathbf{r}_{i}=x_{j},j\in [1,r];\, x_{j}\mathbf{r}_{i}=x_{j-r}\phi\theta,j\in [r+1, 2r];\, x_{j}\mathbf{r}_{i}=x_{j-2r}\theta, j\in [2r+1,3r]\]
 and
 \[  x_{j}\mathbf{r}_i=x_{1},j\in[3r+1,n].\] Clearly, $\mathbf{r}_{i}\in\Theta$
 and $\mathbf{r}_i$ lies in district $(1,\cdots,r)$. Next we define $\mathbf{r}_{l}$ by
 \[ x_{j}\mathbf{r}_{l}=x_{j}, j\in [1,r];\, x_{j}\mathbf{r}_{l}=x_{j-r}\phi, j\in [r+1, 2r];\, x_{j}\mathbf{r}_{l}=x_{j-2r}, j\in [2r+1,3r];\]
 and
 \[ x_{j}\mathbf{r}_{l}=x_{1}, j\in [3r+1,n].\] Again, $\mathbf{r}_{l}$ is  well defined and lies in district $(1,\cdots,r)$. By considering
 the submatrix
$$\left(\begin{array}{cc}
\mathbf{q}_{(r+1,\cdots, 2r)}\mathbf{r}_{l} & \mathbf{q}_{(r+1,\cdots, 2r)}\mathbf{r}_{i}\\
\mathbf{q}_{(2r+1,\cdots,3r)}\mathbf{r}_{l} & \mathbf{q}_{(2r+1,\cdots,3r)}\mathbf{r}_{i}
\end{array}\right)=\left(\begin{array}{cc}
\phi & \phi\theta\\
\epsilon & \theta\end{array}\right),$$
of $P$, Corollary~\ref{lem:ssindr} gives that $f_{i,(r+1,\cdots,2r)}=f_{i,(2r+1,\cdots, 3r)}f_{l,(r+1,\cdots,2r)}$, which in our new notation says  $f_{\phi\theta}=f_{\theta}f_{\phi}$, as required.

Finally, since $1_{\overline{H}}=f_{\epsilon}=f_{\phi\phi^{-1}}=f_{\phi^{-1}}f_{\phi}$, we have $f_{\phi^{-1}}=f_{\phi}^{-1}$.
\end{proof}

\begin{them}\label{thm:n/3} Let $r\leq n/3$. Then $\overline{H}$ is isomorphic to $H$ under
${\boldsymbol \psi}$, where $f_{\phi}{\boldsymbol \psi}=\phi^{-1}$.
\end{them}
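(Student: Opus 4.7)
The plan is to exhibit $\boldsymbol{\psi}$ as an isomorphism by constructing it directly from the presentation $\mathcal{P}$ of Definition~\ref{defn:ourpres} and supplying an explicit two-sided inverse. First I would define a function on the generators of $\mathcal{P}$ by $f_{i,\lambda}\boldsymbol{\psi}=\mathbf{p}_{\lambda i}^{-1}$, and verify that each family of relations in $\Sigma$ is respected, so that $\boldsymbol{\psi}$ extends to a group homomorphism $\overline{H}\to H$.

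For (R2) the verification is immediate from $\mathbf{p}_{\omega(i),i}=\e$, giving $f_{i,\omega(i)}\boldsymbol{\psi}=\e$. For (R3), Corollary~\ref{lem:ssindr} recasts the singular-square hypothesis as $\mathbf{p}_{\lambda i}^{-1}\mathbf{p}_{\lambda k}=\mathbf{p}_{\mu i}^{-1}\mathbf{p}_{\mu k}$, and multiplying on the left by $\mathbf{p}_{\lambda i}$ and on the right by $\mathbf{p}_{\mu k}^{-1}$ rearranges this to $\mathbf{p}_{\lambda i}\mathbf{p}_{\mu i}^{-1}=\mathbf{p}_{\lambda k}\mathbf{p}_{\mu k}^{-1}$, which is exactly the image under $\boldsymbol{\psi}$ of the relation $f_{i,\lambda}^{-1}f_{i,\mu}=f_{k,\lambda}^{-1}f_{k,\mu}$.

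The main technical step is (R1). Given the Schreier condition $\mathbf{h}_{\lambda}e_{i\mu}=\mathbf{h}_{\mu}$, passing through the natural morphism into $\en F_{n}(G)$ converts this into $h_{\lambda}e_{i\mu}=h_{\mu}$ there; pre-composing with $\e$ and invoking Lemma~\ref{lem:righttrans} yields $\mathbf{q}_{\lambda}e_{i\mu}=\mathbf{q}_{\mu}$. In the Rees matrix representation of $D_{r}^{0}$, the element $\mathbf{q}_{\lambda}$ corresponds to $(1,\e,\lambda)$ (since $\mathbf{q}_{\lambda}\in R_{\e}$) and the idempotent $e_{i\mu}$ to $(i,\mathbf{p}_{\mu i}^{-1},\mu)$ by Lemma~\ref{lem:sandwich}. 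Their product is $(1,\mathbf{p}_{\lambda i}\mathbf{p}_{\mu i}^{-1},\mu)$, and equating with $\mathbf{q}_{\mu}=(1,\e,\mu)$ forces $\mathbf{p}_{\lambda i}=\mathbf{p}_{\mu i}$, hence $\mathbf{p}_{\lambda i}^{-1}=\mathbf{p}_{\mu i}^{-1}$, which is the content of (R1) under $\boldsymbol{\psi}$.

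For the inverse I would define $\boldsymbol{\psi}'\colon H\to \overline{H}$ by $\phi\boldsymbol{\psi}'=f_{\phi^{-1}}$. This is well-defined since $r\leq n/3$ gives $2r\leq n$, so Lemma~\ref{lem:all} ensures every element of $H$ arises as some $\mathbf{p}_{\lambda i}$, and Lemma~\ref{lem:2r+1} guarantees the resulting $f_{\phi}\in\overline{H}$ is independent of the representative chosen. By Lemma~\ref{lem:morphism}, $\boldsymbol{\psi}'$ is a homomorphism: $(\phi\theta)\boldsymbol{\psi}'=f_{(\phi\theta)^{-1}}=f_{\theta^{-1}\phi^{-1}}=f_{\phi^{-1}}f_{\theta^{-1}}=(\phi\boldsymbol{\psi}')(\theta\boldsymbol{\psi}')$. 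Checking on generators, $f_{i,\lambda}\boldsymbol{\psi}\boldsymbol{\psi}'=\mathbf{p}_{\lambda i}^{-1}\boldsymbol{\psi}'=f_{\mathbf{p}_{\lambda i}}=f_{i,\lambda}$ and $\phi\boldsymbol{\psi}'\boldsymbol{\psi}=f_{\phi^{-1}}\boldsymbol{\psi}=(\phi^{-1})^{-1}=\phi$, so $\boldsymbol{\psi}$ and $\boldsymbol{\psi}'$ are mutual inverses, establishing that $\boldsymbol{\psi}$ is the required isomorphism sending each $f_{\phi}$ to $\phi^{-1}$. The principal obstacle is the verification of (R1); the rest is largely mechanical once the Rees matrix dictionary is in place.
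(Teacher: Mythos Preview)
Your proposal is correct and relies on the same lemmas as the paper (Lemmas~\ref{lem:all}, \ref{lem:2r+1}, \ref{lem:morphism}), but the organisation differs slightly. The paper does not verify $(R1)$--$(R3)$ explicitly; instead it uses Lemma~\ref{lem:morphism} to observe that $\overline{H}=\{f_{\phi}:\phi\in H\}$ as a \emph{set}, so $\boldsymbol{\psi}$ can be defined on all of $\overline{H}$ at once by $f_{\phi}\mapsto\phi^{-1}$, and then checks that $\boldsymbol{\psi}$ is a surjective homomorphism with trivial kernel (the latter via Lemma~\ref{lem:idgens}: if $f_{\phi}\boldsymbol{\psi}=\epsilon$ then $\phi=\epsilon$, whence $f_{\phi}=1_{\overline{H}}$). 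Your route---extending $\boldsymbol{\psi}$ from the presentation $\mathcal{P}$ by checking each relation family, and then exhibiting $\boldsymbol{\psi}'$ as an explicit two-sided inverse---avoids the appeal to Lemma~\ref{lem:idgens} altogether and is arguably more transparent about why $\boldsymbol{\psi}$ is well defined. Your verification of $(R1)$ via the Rees matrix coordinates is valid, though the paper would deduce $\mathbf{p}_{\lambda i}=\mathbf{p}_{\mu i}$ more directly from $\mathbf{q}_{\lambda}\varepsilon_{i\mu}=\mathbf{q}_{\mu}$ by post-multiplying by $\mathbf{r}_{i}$ and using $\varepsilon_{i\mu}\,\mathcal{R}\,\mathbf{r}_{i}$ (this is exactly the argument that appears later in the proof of Lemma~\ref{presentation}). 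Either way, the substance is the same.
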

\begin{proof} We have that
$\overline{H}=\{ f_{\phi}:\phi\in H\}$ by Lemma~\ref{lem:morphism} and
${\boldsymbol \psi}$ is well defined, by Lemma~\ref{lem:2r+1}. By Lemma~\ref{lem:all}, ${\boldsymbol \psi}$ is onto and it is a homomorphism by
Lemma~\ref{lem:morphism}. Now $f_{\phi}{\boldsymbol \psi}=\epsilon$ means that $\phi=\epsilon$, so that
$f_{\phi}=1_{\overline{H}}$ by Lemma~\ref{lem:idgens}. Consequently, ${\boldsymbol \psi}$ is an isomorphism as required.
\end{proof}

\section{Non-identity generators with simple form}\label{sec:simpleforms}

First we explain the motivation for this section. It follows from Section \ref{sec:restricted} that for any $r$ and $n$ with $n\geq 2r+1$, all entries in the sandwich matrix $P$ are connected. However, this connectivity  will fail for higher ranks. Hence, the aim here is to identify the connected entries in $P$ in the case of higher rank. It turns out that entries with {\it simple form} are always connected. For the reason given in the abstract, from now on we may assume that $1\leq r\leq n-2.$

We run an easy example to explain the lack of connectivity for $r\geq n/2$.

Let $n=4$, $r=2$, and \[\alpha=\left(\begin{array}{cc}
x_{1} & x_{2}  \\
ax_{1} & bx_{2}\end{array}\right),\] with $a,b\neq 1_G\in G.$ It is clear from Lemma 3.10 that there exists $i\in I$,
$\lambda\in \Lambda$ such that $\alpha=\mathbf{p}_{\lambda i}\in P$, in fact we can take
\[\mathbf{r}_i=\left(\begin{array}{cccc}
x_{1} & x_{2} &  x_{3} & x_{4} \\
x_{1} & x_{2} &  ax_{1} & bx_{2}\end{array}\right)\] and $\lambda=(3,4).$

How many copies of $\alpha$ occur in the sandwich matrix $P$? Suppose that $\alpha=\mathbf{p}_{\mu j}$ where
$\mathbf{r}_j$ lies in district $(l_1,l_2)$ and $\mu=(u_1, u_2).$ Since $\overline{\alpha}$ is the identity of
$\mathcal{S}_2$, and $a,b \neq 1_G,$ we must have $1=l_1<l_2,$ $u_1<u_2$, $l_1<u_1$, $l_2<u_2$ and
$\{l_1,l_2\}\cap\{u_1, u_2\}=\varnothing.$ Thus the only possibilities are $$(l_1,l_2)=(1,2), (u_1,
u_2)=(3,4)=\lambda$$ and $$(l_1,l_2)=(1,3), (u_1, u_2)=(2,4)=\mu.$$ In the first case, $\alpha=\mathbf{p}_{\lambda i}$
and in the second, $\alpha=\mathbf{p}_{\mu j}$ where $$\mathbf{r}_j=\left(\begin{array}{cccc}
x_{1} & x_{2} &  x_{3} & x_{4} \\
x_{1} & ax_{1} &  x_{2} & bx_{2}\end{array}\right).$$ Clearly then, $\mathbf{p}_{\lambda i}=\mathbf{p}_{\mu j}\in H$
but $(i, \lambda)$ is {\it not} connected to $(j, \mu).$

\medskip

We know from Lemma~\ref{lem:all}, that in case $r\geq n/2$, not every element of $H$ lies in $P$. However, we are
guaranteed that for $r\leq n-2$ certainly all elements with simple form
\begin{align*}\phi=\left(\begin{array}{cccccccccc}
x_{1} & x_{2} & \cdots & x_{k-1} & x_{k} & x_{k+1} & \cdots & x_{k+m-1}&x_{k+m} \\
x_{1} & x_{2} & \cdots & x_{k-1} & x_{k+1} & x_{k+2} & \cdots & x_{k+m}&ax_k& \end{array}\right. & \\
\left.\begin{array}{ccc}
x_{k+m+1}&\cdots &x_{r} \\
x_{k+m+1}&\cdots &x_r\end{array}\right), &
\end{align*}
where $k\geq 1, m\geq 0, a\in G$, lie in $P$. In particular, we can choose
\begin{align*}\bold{r}_{l_0}=\left(\begin{array}{cccccccccccccccc}
x_{1} & x_{2} & \cdots  & \cdots & x_{k+m} & x_{k+m+1} & x_{k+m+2}& \cdots \\
x_{1} & x_{2} & \cdots  & \cdots & x_{k+m} & ax_{k} & x_{k+m+1}& \cdots & \end{array}\right. &\\
\left.\begin{array}{ccccc}
 \cdots & x_{r+1} & x_{r+2} &\cdots &x_n \\
 \cdots & x_{r} & x_1 &\cdots &x_1\end{array}\right), &
 \end{align*}
and $\mu_0=(1,\cdots,k-1, k+1, \cdots, r+1)$ to give $\bold{p}_{\mu_0 l_0}=\bold{q}_{\mu_0}\bold{r}_{l_0}=\phi$. We now proceed to show that if $\mathbf{p}_{\lambda i}=\phi\neq \varepsilon$, then $(i, \lambda)$
is connected to $(j, \mu_0)$ for some $j\in I$ and hence to $(l_0, \mu_0)$.

\begin{lem}\label{lem:simple}  Let $\varepsilon\neq\phi$ be as above and suppose that $\phi=\mathbf{p}_{\lambda i}\in H$ where  $\lambda=(u_1,\cdots,u_r)$ and $\mathbf{r}_i$ lies in district $(l_1,\cdots,l_r)$. Then $(i, \lambda)$ is connected to $(j, \mu_0)$
for some $j\in I$.
\end{lem}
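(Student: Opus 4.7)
I would construct an explicit chain of elementary connectivity moves from $(i,\lambda)$ to some $(j,\mu_0)$, using the three maneuvers D, U and U$'$ introduced in Section~\ref{sec:restricted} together with Lemmas~\ref{lem:easyform} and~\ref{lem:samecolumn}. Because $\mu_0=(1,\ldots,k-1,k+1,\ldots,r+1)$ is lexicographically the least $\Lambda$-index that can carry $\phi$ (a quick case analysis of the constraints $l_{j\overline{\phi}}\leq u_j$ shows $u_j\geq(\mu_0)_j$ for every admissible $\lambda$), the goal is to bring $\lambda$ down to $\mu_0$ while simultaneously replacing $\mathbf{r}_i$ by connected representatives in $\Theta$.

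The main strategy is induction on $\lambda$ in the lexicographic order on $\Lambda$, with the base case $\lambda=\mu_0$ being trivial. In the inductive step with $\lambda\neq\mu_0$ I would locate the smallest index $j^*$ at which $u_{j^*}>(\mu_0)_{j^*}$ and aim to lower $u_{j^*}$ by at least one. Step U$'$ does this directly when $u_{j^*}-1$ lies in $S(\lambda,i)$; the resulting pair $(i',\lambda')$ then has lex-smaller $\lambda'$ and by the inductive hypothesis connects to some pair with $\Lambda$-component $\mu_0$. When $u_{j^*}-1$ is blocked---by some $l_s$ in the district of $\mathbf{r}_i$ or by some other $u_{s'}$---the crux of the argument is to free that position.

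The hard part is precisely this unblocking, which bites when $r$ is close to $n$ and positions are scarce. Here I would exploit the simple form of $\phi$: the only strict constraint among the inequalities $l_{j\overline{\phi}}\leq u_j$ is the one at $j=k+m$ (because $a_{k+m}\neq 1_G$), while all other constraints tolerate equality. So for indices $s$ outside the cycle $[k,k+m]$ there is considerable freedom to slide $l_s$ via Step D without disturbing $\phi$, and this flexibility can be used to vacate the slot immediately below $u_{j^*}$. A parallel remark handles $u$-blockers: Step U can harmlessly push a blocking $u_{s'}$ upward into any free position, and a subsequent Step U$'$ restores it on the other side.

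Concretely I would nest a subsidiary induction on the number of blocking positions, each outer iteration assembling a short chain of the flavour $(l_0,(2,3))\leftrightarrow(m,(2,4))\leftrightarrow(k,(3,4))$ foreshadowed in the $n=4$, $r=2$ example at the start of Section~\ref{sec:simpleforms}: an initial Step U to shift a conflicting $u$ out of the way, the intended Step D or Step U$'$, and, if required, a final Step U$'$ to reabsorb the displaced $u$. Each such move is an instance of Lemma~\ref{lem:easyform} or Lemma~\ref{lem:samecolumn}, so by transitivity $(i,\lambda)$ remains connected to every intermediate pair; the outer lex-decreasing induction then delivers the desired connectivity to some $(j,\mu_0)$.
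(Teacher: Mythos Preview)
Your high-level plan---lex-decreasing induction on $\lambda$ using the moves D, U, U$'$---is sensible, and matches the spirit of the paper. But the ``unblocking'' step is where the proof lives, and you have only gestured at it. When $r$ is close to $n$ the set $U(\lambda,i)=\{l_1,\dots,l_r,u_1,\dots,u_r\}$ can have up to $2r>n$ listed elements, and your claim that ``Step U can harmlessly push a blocking $u_{s'}$ upward into any free position'' presupposes a free position above; likewise your claim that Step D can slide $l_s$ presupposes a free slot in $(l_{s-1},l_s)$. Neither is justified, and a ``subsidiary induction on the number of blocking positions'' is not enough: each unblocking move may create new blockers, and you have not exhibited a quantity that strictly decreases. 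As stated, the argument is a plan, not a proof.

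The paper avoids this trap by a two-phase normalisation rather than a single induction on $\lambda$. First, because $\phi$ fixes $x_1,\dots,x_{k-1}$, one has $x_{u_j}\mathbf{r}_i=x_j=x_{l_j}\mathbf{r}_i$ for $j<k$; so Lemma~\ref{lem:easyform} lets one replace $\lambda$ by $\nu=(l_1,\dots,l_{k-1},u_k,\dots,u_r)$ \emph{for free}. Now $U(\nu,i)\cap[1,l_k)=\{l_1,\dots,l_{k-1}\}$ and Step D genuinely applies to drive $(l_1,\dots,l_{k-1})$ down to $(1,\dots,k-1)$, yielding $\kappa=(1,\dots,k-1,u_k,\dots,u_r)$. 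Second, instead of massaging the existing $\mathbf{r}_j$, the paper \emph{replaces} it (via Lemma~\ref{lem:samecolumn}) by a purpose-built $\mathbf{r}_w$ whose district is contained in $\{1,\dots,k\}\cup\{u_k,\dots,u_r\}$; thus $|U(\kappa,w)|\le r+1<n$, a free slot is guaranteed, and Step U$'$ then lowers $(u_k,\dots,u_r)$ by lex-induction. The key idea you are missing is exactly this: collapse overlaps first so that $|U|\le r+1$, which is what turns ``considerable freedom'' into an actual free position.
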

\begin{proof}
Notice that as $\phi=\mathbf{p}_{\lambda i}$, we have $x_{u_k}r_i=x_k\phi,$ so that $x_{u_k}r_i=x_{k+1}$ if $m>0,$ and so $u_k\geq l_{k+1}>l_k$ by Lemma \ref{property of r_{i}}; or if $m=0$ and $a\neq 1_G$, $x_{u_k}r_i=ax_{k}$ so that $u_k>l_k$ by Lemma \ref{property of r_{i}} again. Further, from the constraints on $(l_1, \cdots, l_r)$ it follows that $$l_1<l_2<\cdots <l_{k-1}<l_k<u_k.$$

We first ensure that $(i, \lambda)$ is connected to some $(j, \kappa)$ where
$\kappa= (1,\hdots, k-1,u_k,\hdots,u_r)$,  by induction on $(l_1, \cdots, l_{k-1})\in [1, n]^r$ under the lexicographic order.

If $(l_1, \cdots, l_{k-1})=(1, \cdots, k-1),$ then clearly $(i, \lambda)=(i, \kappa)$. Suppose now that $(l_1, \cdots,
l_{k-1})>(1, \cdots, k-1)$ and the result is true for all $(l_1', \cdots, l_{k-1}')\in [1, n]^r$ where $(l_1',\cdots,
l_{k-1}')<(l_1, \cdots, l_{k-1}),$ namely, if $\mathbf{p}_{\eta l}=\phi$ with $\mathbf{r}_l$ in district $(l_1',
\cdots, l_r')$, then $(l, \eta)$ is connected to some $(j, \kappa)$.

By putting $\nu=(l_1,\cdots,l_{k-1},u_k,\cdots,u_r)$ we have $\mathbf{p}_{\nu i}=\mathbf{p}_{\lambda i}.$ Since we have
$(l_1, \cdots, l_{k-1})>(1, \cdots, k-1)$, there must be a $t\in (l_s,l_{s+1})\cap S(\nu,i)$ for some $s\in [0,k-2]$,
where $l_0=0.$ We can use Step D to move $l_{s+1}$ down to $t$, obtaining $\mathbf{r}_p$ in district $(l_1,\hdots,
l_s,t,l_{s+2},\hdots,l_r)$ such that $\mathbf{p}_{\nu i}=\mathbf{p}_{\nu p}.$ Clearly $$(l_1, \cdots, l_s, t, l_{s+2},
\cdots, l_{k-1})<(l_1, \cdots, l_{k-1}),$$ so that by induction $(p, \nu)$ (and hence $(i, \lambda)$) is connected to
some $(j, \kappa).$

%, where $r_j$ lies in district$(1,\hdots, k-1,l_k,\hdots,l_r)$.

We now proceed via induction on $(u_k,\hdots,u_r)\in [k+1,n]^r$ under the lexicographic order to show that $(j, \kappa)$ is connected to some $(l, \mu_0)$. Clearly, this is true for $(u_k,\hdots,u_r)=(k+1, \cdots, r+1).$

Suppose that  $(u_k,\hdots,
u_r)>(k+1,\hdots, r+1)$, and the result is true for all $(v_k, \cdots, v_r)\in [k+1, n]^r$ where $(v_k, \cdots, v_r)<(u_1, \cdots, u_r)$. Then we define $\mathbf{r}_w$ by:
\[x_l\mathbf{r}_w=x_l,l\in [1,k],\, x_{u_l}\mathbf{r}_w=x_{u_l}\mathbf{r}_j, l\in [k,r]\mbox{ and }x_v\mathbf{r}_w=x_1\mbox{ for all other }x_v.\]
It is easy to see that $\mathbf{r}_w\in\Theta$,   $\mathbf{r}_w$ lies in district $$(1,2,\cdots,k,u_k,\cdots,
u_{k+m-1},u_{k+m+1},\cdots,u_r)$$ and $\mathbf{p}_{\kappa j}=\mathbf{p}_{\kappa w}$. Note that there must exist a $t<u_h$ for some
$h\in [k,r]$ such that $[t,u_h)\subseteq S(\kappa,w)$. By Step U$'$, we have that $(w, \kappa)$ is connected to $(v, \rho)$
where $\rho=(1,\hdots,k-1,u_k,\hdots, u_{h-1},t,u_{h+1},\hdots, u_r)$. Clearly, $$(u_k,\hdots,
u_{h-1},t,u_{h+1},\hdots, u_r)<(u_k, \cdots, u_{h-1}, u_h, u_{h+1}, \cdots, u_r),$$ so that by induction  $(v, \rho)$
is connected to $(l, \mu_0).$ The proof is completed.
\end{proof}

The following corollary is immediate from Lemma 4.1, Corollary~\ref{coro:connected} and Lemma \ref{lem:simple}.

\begin{coro}\label{coro:simple} Let $\mathbf{p}_{\lambda i}=\mathbf{p}_{\nu k}$ have simple form. Then $f_{i,\lambda}=f_{k,\nu}$.
\end{coro}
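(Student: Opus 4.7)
The plan is to exploit Lemma~\ref{lem:simple} to pull both pairs $(i,\lambda)$ and $(k,\nu)$ to a common column index, at which point Corollary~\ref{coro:connected} closes the argument. Write $\phi = \mathbf{p}_{\lambda i} = \mathbf{p}_{\nu k}$ for the common simple-form entry. To avoid the notational clash between the $I$-index $k$ appearing in the statement and the parameter $k$ appearing in the definition of a simple form, I will refer to the latter as $k_0$, so that
\[
\mu_0 = (1,\ldots,k_0-1,k_0+1,\ldots,r+1)
\]
is the distinguished element of $\Lambda$ singled out immediately before Lemma~\ref{lem:simple}.

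First I would dispose of the degenerate case $\phi = \varepsilon$, which occurs precisely when $m = 0$ and $a = 1_G$ in the simple form: here Lemma~\ref{lem:idgens} gives $f_{i,\lambda} = 1_{\overline{H}} = f_{k,\nu}$ directly. For the principal case $\phi \neq \varepsilon$, I apply Lemma~\ref{lem:simple} twice, once to each pair, to produce $j, j' \in I$ such that $(i,\lambda)$ is connected to $(j,\mu_0)$ and $(k,\nu)$ is connected to $(j',\mu_0)$. The crucial observation is then that $\mathbf{p}_{\mu_0 j} = \phi = \mathbf{p}_{\mu_0 j'}$, so taking $i_0 = j$, $i_1 = j'$, $\lambda_0 = \lambda_1 = \mu_0$ in Definition~\ref{defn:connected} exhibits $(j,\mu_0)$ as connected to $(j',\mu_0)$ in a single step. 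Since the connected relation is evidently transitive and symmetric (chains may be concatenated or reversed), the three chains assemble into one from $(i,\lambda)$ to $(k,\nu)$, and Corollary~\ref{coro:connected} yields $f_{i,\lambda} = f_{k,\nu}$.

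There is no substantive obstacle here: the combinatorial heart of the matter has been absorbed into Lemma~\ref{lem:simple}, and the gluing step at $\mu_0$ is painless because Definition~\ref{defn:connected} treats a same-column move as a basic step, requiring only the equality $\mathbf{p}_{\mu_0 j} = \mathbf{p}_{\mu_0 j'}$ that we already have. This routine passage from the `reduction to a canonical pair' of Lemma~\ref{lem:simple} to genuine equality of generators is exactly what the author has in mind when calling the corollary immediate.
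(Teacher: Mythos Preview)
Your proposal is correct and follows essentially the same route the paper has in mind: use Lemma~\ref{lem:idgens} for the trivial case $\phi=\varepsilon$, and otherwise apply Lemma~\ref{lem:simple} to each of $(i,\lambda)$ and $(k,\nu)$ to reach the common column $\mu_0$, then glue via Corollary~\ref{coro:connected}. One small simplification: you need not argue symmetry or transitivity of connectedness at all, since applying Corollary~\ref{coro:connected} separately to each of the three chains already gives $f_{i,\lambda}=f_{j,\mu_0}=f_{j',\mu_0}=f_{k,\nu}$ directly.
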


\section{Non-identity generators with arbitrary form}\label{sec:arbform}

Our aim here is to prove that for any $\alpha\in H$, if $i, j\in I$ and $\lambda, \mu\in \Lambda$ with
$\mathbf{p}_{\lambda i}=\mathbf{p}_{\mu j}=\alpha\in H$, then $f_{i, \lambda}=f_{j, \mu}$.  This property of $\alpha$
is called {\em consistency}. Notice that Corollary~\ref{coro:simple} tells us that all elements with simple form are
consistent.

Before we explain the strategy in this section, we run the following example by the reader, which shows that if $|G|>1$, we cannot immediately separate  an element  $\alpha\in \overline{H}$ into a product, $\beta\gamma$ or $\gamma\beta$, where $\beta$ is  essentially an element of $\mathcal{S}_r$, and $\overline{\gamma}$ is the identity in $\mathcal{S}_r$.

Let $1_G\neq a$, $n=6$ and $r=4$, so that $\alpha=\left(\begin{array}{cccccc}
x_{1} & x_{2} & x_{3} & x_{4}\\
x_{3} & ax_{2} & x_{4} & x_{1} \end{array}\right)\in H$. By
putting $$\mathbf{r}_i=\left(\begin{array}{cccccccccccccccc}
x_{1} & x_{2} & x_{3} & x_{4} & x_{5} & x_{6} \\
x_{1} & x_{2} & x_{3} & ax_{2} & x_{4} & x_{1} \end{array}\right)$$ and $\lambda=(3,4,5,6),$ clearly we have
$\mathbf{p}_{\lambda i}=\alpha$.

Next we argue that $i\in I$ and $\lambda\in \Lambda$ are unique such that $\mathbf{p}_{\lambda
i}=\alpha.$ Let $\mu=(u_1, u_2, u_3, u_4)$ and $\mathbf{r}_j$ lie in district $(l_1, l_2,l_3,l_4)$ with
$\mathbf{p}_{\mu j}=\alpha$; we show that $\mathbf{r}_j=\mathbf{r}_i$ and $\mu=\lambda.$ Since
$x_{u_1}\mathbf{r}_j=x_1\alpha=x_3$ by assumption, we must have $l_1<l_2<l_3\leq u_1,$ so that $u_1\geq 3$. As $3\leq
u_1<u_2<u_3<u_4\leq n=6$, we have $\mu=(u_1, u_2,u_3,u_4)=(3,4,5,6)=\lambda$, and $(l_1,l_2)=(1,2)$. Clearly then $\mathbf{r}_j=\mathbf{r}_i.$

%Can we splitting this $\alpha$ into a product of two elements, say $\alpha=\gamma \beta,$ with $\beta$ and $\gamma$ have relatively simpler form, such that $f_{i, \lambda}=f_{i_1, \lambda_1}f_{i_2, \lambda_2}$ with $\mathbf{p}_{\lambda_1 i_1}=\beta$ and $\mathbf{p}_{\lambda_2 i_2}=\gamma$?

Certainly  $\alpha=\gamma\beta=\beta\gamma$ with $$\gamma=\left(\begin{array}{cccccc}
x_{1} & x_{2} & x_{3} & x_{4}\\
x_{3} & x_{2} & x_{4} & x_{1} \end{array}\right), \beta=\left(\begin{array}{cccccc}
x_{1} & x_{2} & x_{3} & x_{4}\\
x_{1} & ax_{2} & x_{3} & x_{4} \end{array}\right).$$

% and $\alpha=\gamma'\beta'$ with  $$\gamma'=\left(\begin{array}{cccccc}
%x_{1} & ax_{2} & x_{3} & x_{4}\\
%x_{1} & ax_{2} & x_{3} & x_{4} \end{array}\right), \beta'=\left(\begin{array}{cccccc}
%x_{1} & x_{2} & x_{3} & x_{4}\\
%x_{3} & x_{2} & x_{4} & x_{1} \end{array}\right).$$
Our question is, can we find a sub-matrix of $P$ with one of the following forms: $$\left(\begin{array}{cc}
\gamma & \alpha \\
\varepsilon & \beta \end{array}\right)\mbox{~or~}\left(\begin{array}{cc}
\beta & \alpha \\
\varepsilon& \gamma \end{array}\right).$$ Clearly, here the answer is in the negative, as it is easy to see from the
definition $\mathbf{r}_i$ that there does not exist $\nu\in \Lambda$ with $\mathbf{p}_{\nu i}=\beta$ or
$\mathbf{p}_{\nu i}=\gamma$.

Now it is time for us to explain our trick of  how to split an arbitrary element $\alpha$ in $H$ into a product of
elements with simple form (defined in the previous section), and moreover, how this splitting  matches the products of
generators $f_{i, \lambda}$ in $\overline{H}.$

Our main strategy is as follows. We introduce a notion of `rising point' of $\alpha\in H$. Now, given
$\mathbf{p}_{\lambda i}=\alpha$, we
 decompose
$\alpha$ as a product $\alpha=\beta\gamma$ {\em depending only on $\alpha$} such that $\gamma$ is an element with
simple form, $\beta=\mathbf{p}_{\lambda j}$ has a lower rising point than $\alpha$, $\gamma=\mathbf{p}_{\mu i}$ for
some $j\in I,\mu\in\Lambda$ such that our presentation gives $f_{i,\lambda}=f_{i,\mu}f_{j,\lambda}$.

\begin{defn}\label{defn:risingpoint} Let $\alpha\in H$.  We say that $\alpha$ has {\it rising point} $r+1$ if
$x_m\alpha=ax_r$ for some $m\in [1,r]$ and $a\neq 1_G$; otherwise, the rising point is $k\leq r$ if there exists a sequence $$1\leq i<j_{1}<j_{2}<\cdots<j_{r-k}\leq r$$ with
$$x_{i}\alpha=x_{k}, x_{j_{1}}\alpha=x_{k+1}, x_{j_{2}}\alpha=x_{k+2}, \cdots, x_{j_{r-k}}\alpha=x_{r}$$ and
such that if $l\in [1,r]$ with  $x_{l}\alpha=ax_{k-1}$, then if    $l<i$ we must have $a\neq 1_G$.
\end{defn}

We briefly outline an algorithm for computing the value of the rising point of an element $\alpha\in G\wr \mathcal{S}_r$, which should convince our readers that the rising point is uniquely determined by $\alpha.$

(1) First look at the unique $ax_r$ in the image of $\alpha$. If $a\neq 1_G$ then set $k=r+1.$

(2) Otherwise, look to the left of $x_r$ and see if $ax_{r-1}$ appears to the left in the image in the standard two-row representation. If it does, check the value of $a$ in $ax_{r-1}$. If $a=1$ then repeat the process of looking left.

(3) Carrying out this process eventually one of two things must happen, either

(I) we stop because we reach some $ax_{k-1}$ with $a\neq 1_G$. Then we say the rising point value is $k.$ Or

(II) we reach $ax_k=x_k$ and do not see $ax_{k-1}$ to the left so the process stops and the rising point value is $k.$

We refer our readers to \cite{Yang:2014} for various examples of computing the rising point value.

It is easy to see that the only element with rising point 1 is the identity of $H$, and elements with rising point 2 have either of the following two forms:

(i) $\alpha=\left(\begin{array}{cccccccccccccccc}
x_{1} & x_{2} & \cdots & x_{r} \\
ax_{1} & x_{2} & \cdots & x_{r} \end{array}\right)$, where $a\neq 1_G$;

(ii) $\alpha=\left(\begin{array}{cccccccccccccccc}
x_{1} & x_{2} & \cdots & x_{k-1} & x_{k} & x_{k+1} & \cdots & x_{r} \\
x_{2} & x_{3} & \cdots & x_{k} & ax_{1} &x_{k+1} & \cdots & x_{r} \end{array}\right)$, where $k\geq 2.$

Note that both of the above two forms are the so called simple forms; however, elements with simple form can certainly have rising point greater than 2, indeed, it can be $r+1$. From Lemma~\ref{lem:idgens} and Corollary~\ref{coro:simple} we immediately deduce:

\begin{coro}\label{coro:1or2} Let $\alpha\in H$ have rising point 1 or 2. Then $\alpha$ is consistent.
\end{coro}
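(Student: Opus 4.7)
The strategy is to handle the two rising point values separately, in each case reducing the claim to one of the results already proved.

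\textbf{Rising point 1.} First I would argue that the only element of $H$ with rising point $1$ is the identity $\varepsilon$. Indeed, by Definition~\ref{defn:risingpoint} applied with $k=1$, one obtains a strictly increasing sequence $1\leq i<j_1<\cdots<j_{r-1}\leq r$ with $x_i\alpha=x_1,\,x_{j_1}\alpha=x_2,\dots,x_{j_{r-1}}\alpha=x_r$; since all $r$ indices live in $[1,r]$, this forces $i=1$, $j_s=s+1$ for $s\in[1,r-1]$, and $x_s\alpha=x_s$ for every $s\in[1,r]$, i.e.\ $\alpha=\varepsilon$. Now given any $i,j\in I$ and $\lambda,\mu\in\Lambda$ with $\mathbf{p}_{\lambda i}=\mathbf{p}_{\mu j}=\varepsilon$, Lemma~\ref{lem:idgens} directly yields $f_{i,\lambda}=1_{\overline{H}}=f_{j,\mu}$, so $\varepsilon$ is consistent.

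\textbf{Rising point 2.} Here I would use the explicit list of the two possible forms (i) and (ii) given immediately after Definition~\ref{defn:risingpoint} and observe that each is a simple form in the sense of Section~\ref{sec:simpleforms}. Form (i), namely
\[\alpha=\begin{pmatrix}x_1&x_2&\cdots&x_r\\ ax_1&x_2&\cdots&x_r\end{pmatrix}\quad(a\neq 1_G),\]
is the simple form with parameters $k=1$ and $m=0$. Form (ii), namely
\[\alpha=\begin{pmatrix}x_1&x_2&\cdots&x_{k-1}&x_k&x_{k+1}&\cdots&x_r\\ x_2&x_3&\cdots&x_k&ax_1&x_{k+1}&\cdots&x_r\end{pmatrix}\quad(k\geq 2),\]
is the simple form with parameters $k_{\mathrm{simp}}=1$ and $m_{\mathrm{simp}}=k-1$ (so that the shift $x_1\mapsto x_2\mapsto\cdots\mapsto x_k$ followed by $x_k\mapsto ax_1$ matches the pattern in the simple form template). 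Hence Corollary~\ref{coro:simple} applies and gives $f_{i,\lambda}=f_{j,\mu}$ whenever $\mathbf{p}_{\lambda i}=\mathbf{p}_{\mu j}=\alpha$.

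\textbf{Obstacles.} There is essentially no obstacle: the whole content of the corollary is the bookkeeping observation that the two extremal rising point values fall into cases already handled by Lemma~\ref{lem:idgens} and Corollary~\ref{coro:simple}. The only thing one must verify carefully is that the listed forms (i) and (ii) really do exhaust rising point $2$ and that they fit the simple form template; both of these follow mechanically from Definition~\ref{defn:risingpoint} and the shape described in Section~\ref{sec:simpleforms}. Consequently I would present the corollary as an immediate consequence of the two cited results with only the short verifications above.
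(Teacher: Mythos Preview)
Your proposal is correct and follows exactly the approach the paper takes: the paper states that the corollary is immediate from Lemma~\ref{lem:idgens} and Corollary~\ref{coro:simple}, having just observed that rising point~1 forces $\alpha=\varepsilon$ and that the two rising-point-2 forms (i) and (ii) are simple forms. Your write-up simply spells out these verifications in a bit more detail than the paper does.
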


Next, we will see how to decompose an element with a  rising point at least 3 into a product of an element with a lower rising point and an element with simple form.

\begin{lem} \label{decomposition lemma}
Let $\alpha\in H$ have rising point $ k\geq3$. Then $\alpha$ can be expressed as a product of some $\beta \in H$ with  rising point no more than $k-1$ and some $\gamma \in H$ with  simple form.
\end{lem}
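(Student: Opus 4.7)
The plan is to construct, from the data of the rising point of $\alpha$, a simple-form element $\gamma$ whose inverse, when applied on the right, extends the witnessing chain downwards by one and thereby lowers the rising point; then $\beta := \alpha\gamma^{-1}$ will satisfy $\alpha = \beta\gamma$, and the construction will depend only on $\alpha$.

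Recall that rising point $k$ comes with positions $i < j_1 < \cdots < j_{r-k}$ mapping, via $\alpha$ and all with identity coefficient, onto $x_k, x_{k+1}, \ldots, x_r$, together with the unique $p \in [1,r]$ satisfying $x_p\alpha = a\, x_{k-1}$ for some $a \in G$. The minimality of $k$ excludes the configuration $p < i$ with $a = 1_G$, since otherwise $p < i < j_1 < \cdots < j_{r-k}$ would already witness a smaller rising point for $\alpha$. Hence exactly one of the following holds: (i) $k = r+1$; (ii) $3 \leq k \leq r$ and $p > i$; (iii) $3 \leq k \leq r$, $p < i$, and $a \neq 1_G$.

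The three cases drive different choices of $\gamma$, each of simple form. In (i), I take $\gamma$ to scale $x_r$ by the unique $a \neq 1_G$ arising from $x_i\alpha = a x_r$ (simple form with $k = r$, $m = 0$), so that $\beta$ sends position $i$ to $x_r$ with identity coefficient, yielding rising point at most $r$. In (iii), I take $\gamma$ with $k = k-1$, $m = 0$, coefficient $a$, that is, $\gamma : x_{k-1} \mapsto a\, x_{k-1}$; then $\beta$ differs from $\alpha$ only at position $p$, which now maps to $x_{k-1}$ with identity coefficient, and the chain $p < i < j_1 < \cdots < j_{r-k}$ witnesses rising point at most $k-1$. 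In (ii), the choice of $\gamma$ must be tied to the location of $p$: I locate the unique $s \in \{1, \ldots, r-k+1\}$ with $j_{s-1} < p < j_s$ (using the conventions $j_0 := i$ and $j_{r-k+1} := r+1$) and take $\gamma$ to be the simple form with $k = k-1$, $m = s$, coefficient $a$, which forward-cyclically-shifts $x_{k-1}, x_k, \ldots, x_{k-1+s}$ with wrap-around factor $a$. A direct computation of $\beta = \alpha\gamma^{-1}$ then shows that the augmented, still-increasing sequence $i < j_1 < \cdots < j_{s-1} < p < j_s < \cdots < j_{r-k}$ maps under $\beta$ onto $x_{k-1}, x_k, \ldots, x_r$ with identity coefficients, with the crucial $a \cdot a^{-1} = 1_G$ cancellation taking place at position $p$.

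The main obstacle lies in Case (ii), where the parameter $m = s$ of $\gamma$ must be synchronised with the gap index of $p$ in the chain; only this specific matching causes the wrap-around factor $a$ of $\gamma$ to cancel against the coefficient appearing in $x_p\alpha$, producing the identity coefficients required for the extended chain. Once that synchronisation is identified, the remaining verifications, and the observation that all data used to build $\gamma$ depend only on $\alpha$, are routine.
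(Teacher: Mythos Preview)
Your proof is correct and follows essentially the same approach as the paper's own argument: a case split on the position of the preimage of $x_{k-1}$ relative to the witnessing chain $i<j_1<\cdots<j_{r-k}$, with $\gamma$ chosen as the simple form whose cycle length matches the gap index so that the coefficient $a$ cancels. The only difference is cosmetic: the paper separates your Case~(ii) into three subcases ($i<p<j_1$; $j_u<p<j_{u+1}$; $j_{r-k}<p$) and writes out $\gamma$ and $\beta$ explicitly in each, whereas you handle them uniformly via the single parameter $s$ with conventions $j_0=i$, $j_{r-k+1}=r+1$.
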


\begin{proof} \noindent{\em Case (0)}
By definition of rising point, if $k=r+1$, then we have $x_m\alpha=ax_r$ for some $a\neq 1_G$ and $m\in [1,r]$. We define $$\gamma=\left(\begin{array}{cccccccccccccccc}
x_{1} & x_{2} & \cdots & x_{r-1} & x_{r} \\
x_{1} & x_{2} & \cdots & x_{r-1} & ax_{r} \end{array}\right)$$ and $\beta$ by $x_{m}\beta=x_{r}$ and for other $j\in [1,r]$, $x_{j}\beta=x_{j}\alpha$. Clearly, $\alpha=\beta\gamma$, $\gamma$ is a simple form, and $\beta$ has rising point no greater than $r$.

\medskip

 On the other hand, if $k\leq r$ there exists a sequence $$1\leq i<j_{1}<j_{2}\cdots <j_{r-k}\leq r$$ with $$x_{i}\alpha=x_{k}, x_{j_{1}}\alpha=x_{k+1}, x_{j_{2}}\alpha=x_{k+2}, \cdots, x_{j_{r-k}}\alpha=x_{r}$$
 such that if $l\in [1,r]$ with  $x_{l}\alpha=ax_{k-1}$, then if    $l<i$ we must have $a\neq 1_G$.
 We proceed by considering the following cases:

\noindent{\em Case (i)}
 If $l<i$, so that  $a\neq 1_G$, then define  $$\gamma=\left(\begin{array}{cccccccccccccccc}
x_{1} & x_{2} & \cdots & x_{k-2} & x_{k-1} & x_{k} & \cdots & x_{r} \\
x_{1} & x_{2} & \cdots & x_{k-2} & ax_{k-1} &x_{k} & \cdots & x_{r} \end{array}\right)$$
and put $\beta=\alpha\gamma^{-1}$. It is easy to check that $x_{l}\beta=x_l\alpha\gamma^{-1}=x_{k-1}$ and $x_{p}\beta =x_{p}\alpha$, for other $p\in [1,r]$.

\noindent{\em Case (ii)} If $i<l<j_1$, then define
$$\gamma=\left(\begin{array}{cccccccccccccccc}x_{1} & x_{2} & \cdots & x_{k-2} & x_{k-1} & x_{k} & x_{k+1}& \cdots & x_{r} \\
x_{1} & x_{2} & \cdots & x_{k-2} & x_{k} & ax_{k-1} & x_{k+1} & \cdots & x_{r} \end{array}\right)$$
and again, we put $\beta=\alpha\gamma^{-1}$. By easy calculation we have $$x_{i}\beta=x_{k-1}, x_{l}\beta=x_{k}, x_{j_{1}}\beta=x_{k+1},\cdots, x_{j_{r-k}}\beta=x_{r}$$ and for other $p\in [1,r]$, $x_{p}\beta=x_{p}\alpha$.

\noindent{\em Case (iii)} If  $j_{r-k}<l$, then define $$\gamma=\left(\begin{array}{cccccccccccccccc}
x_{1} & x_{2} & \cdots & x_{k-2} & x_{k-1} & x_{k} & x_{k+1}& \cdots & x_{r-1} & x_{r} \\
x_{1} & x_{2} & \cdots & x_{k-2} & x_{k} & x_{k+1} & x_{k+2} & \cdots & x_{r} & ax_{k-1}\end{array}\right)$$
and again, we define $\beta=\alpha\gamma^{-1}$.  It is easy to see that $$x_{i}\beta=x_{k-1}, x_{j_{1}}\beta=x_{k}, x_{j_{2}}\beta=x_{k+1},\cdots, x_{j_{r-k}}\beta=x_{r-1}, x_{l}\beta=x_{r}$$ and for other $p\in [1,r]$, $x_{p}\beta=x_{p}\alpha$.

\noindent{\em Case (iv)} If $j_{u}<l<j_{u+1}$ for some $u\in[1,r-k-1]$, then define
\begin{align*}
\gamma=\left(\begin{array}{ccccccccccccccc}
x_{1} & x_{2} & \cdots & x_{k-2} & x_{k-1} & x_{k} &\cdots&x_{k+u-1}&x_{k+u}&   \\
x_{1} & x_{2} & \cdots & x_{k-2} & x_{k} & x_{k+1} & \cdots& x_{k+u}&ax_{k-1} &
\end{array}\right. & \\
\left.\begin{array}{ccc}
x_{k+u+1}& \cdots & x_{r} \\
x_{k+u+1}&    \cdots   & x_{r}
\end{array}\right) &
\end{align*}
and again, we put $\beta=\alpha\gamma^{-1}$.  Then we have $$x_{i}\beta=x_{k-1},\ x_{j_1}\beta=x_k,\ \hdots\ ,
x_{j_u}\beta=x_{k+u-1},\ x_l\beta= x_{k+u},$$ $$ x_{j_{u+1}}\beta=x_{k+u+1},\ \hdots\ , x_{j_{r-k}}\beta=x_r$$ and for
other $p\in [1,r]$, $x_{p}\beta=x_{p}\alpha$.

In each of Cases $(i)-(iv)$ it is clear that  $\gamma$ has  simple form,  $\alpha=\beta\gamma$ and $\beta$ has a rising point no more than $k-1$.  The proof is completed.
\end{proof}

Note that in each of Cases $(ii)-(iv)$ of Lemma~\ref{decomposition lemma}, that is, where $i<l$, we have   $x_{p}\beta=x_{p}\alpha$ for all $p<i$.

\begin{lem}\label{lem:consind} Let $\alpha,\beta,\gamma\in H$ with $\alpha=\beta\gamma$ and $\beta,\gamma$ consistent.
Suppose that whenever $\alpha=\mathbf{p}_{\lambda j}$, we can find $(t,\lambda), (j,\mu)\in K$ with
$\beta=\mathbf{p}_{\lambda t}, \gamma=\mathbf{p}_{\mu j}$ and $f_{j,\lambda}=f_{j,\mu}f_{t,\lambda}$. Then $\alpha$ is
consistent.
\end{lem}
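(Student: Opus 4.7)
The plan is direct: unpack what \emph{consistency} requires and apply the hypothesis twice. Suppose $\alpha=\mathbf{p}_{\lambda_1 i_1}=\mathbf{p}_{\lambda_2 i_2}$ are two expressions of $\alpha$ as entries of $P$; the goal is to deduce $f_{i_1,\lambda_1}=f_{i_2,\lambda_2}$ from the given relations.

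First I would apply the hypothesis to each of the two expressions separately. This produces $t_1,t_2\in I$ and $\mu_1,\mu_2\in\Lambda$ with $(t_1,\lambda_1),(i_1,\mu_1),(t_2,\lambda_2),(i_2,\mu_2)\in K$ such that
\[
\beta=\mathbf{p}_{\lambda_1 t_1}=\mathbf{p}_{\lambda_2 t_2},\qquad \gamma=\mathbf{p}_{\mu_1 i_1}=\mathbf{p}_{\mu_2 i_2},
\]
and, in the presentation $\mathcal{P}$,
\[
f_{i_1,\lambda_1}=f_{i_1,\mu_1}f_{t_1,\lambda_1},\qquad f_{i_2,\lambda_2}=f_{i_2,\mu_2}f_{t_2,\lambda_2}.
\]

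Next I would invoke the consistency of $\beta$ and $\gamma$ to collapse the two factorisations into each other. Since $\beta$ is consistent and $\mathbf{p}_{\lambda_1 t_1}=\mathbf{p}_{\lambda_2 t_2}=\beta$, we obtain $f_{t_1,\lambda_1}=f_{t_2,\lambda_2}$. Likewise, consistency of $\gamma$ together with $\mathbf{p}_{\mu_1 i_1}=\mathbf{p}_{\mu_2 i_2}=\gamma$ yields $f_{i_1,\mu_1}=f_{i_2,\mu_2}$. Multiplying these equalities and combining with the two displayed factorisations gives
\[
f_{i_1,\lambda_1}=f_{i_1,\mu_1}f_{t_1,\lambda_1}=f_{i_2,\mu_2}f_{t_2,\lambda_2}=f_{i_2,\lambda_2},
\]
as required, so $\alpha$ is consistent.

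There is no real obstacle here; the argument is essentially bookkeeping within the presentation $\mathcal{P}$. The only subtlety worth flagging is that the hypothesis needs to apply to \emph{any} given expression $\alpha=\mathbf{p}_{\lambda j}$, not merely to a single chosen one, so that both expressions can be simultaneously factored through $\beta$ and $\gamma$; this is precisely what the statement provides, which is why the argument collapses so cleanly.
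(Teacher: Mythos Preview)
Your proof is correct and follows essentially the same approach as the paper: apply the hypothesis to each of the two expressions of $\alpha$, then use the consistency of $\beta$ and $\gamma$ to identify the resulting factors and conclude. If anything, you have spelled out the final chain of equalities more explicitly than the paper does.
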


\begin{proof} Let $\alpha,\beta,\gamma$ satisfy the hypotheses of the lemma. If $\alpha=\mathbf{p}_{\lambda j}=\mathbf{p}_{\lambda' j'}$, then
by assumption we can find  $(t,\lambda), (j,\mu), (t',\lambda'), (j',\mu')\in K$ with $\beta=\mathbf{p}_{\lambda
t}=\mathbf{p}_{\lambda' t'}, \gamma=\mathbf{p}_{\mu j}=\mathbf{p}_{\mu' j'}$,  $f_{j,\lambda}=f_{j,\mu}f_{t,\lambda}$
and $f_{j',\lambda'}=f_{j',\mu'}f_{t',\lambda'}$. The result now follows from the consistency of $\beta$ and $\gamma$.
\end{proof}

\begin{prop}\label{prop:eqofgens} Every $\alpha\in P$ is consistent. Further, if $\alpha=\mathbf{p}_{\lambda j}$ then $f_{j, \lambda}$ is equal in $\overline{H}$ to a product
$f_{i_1,\lambda_1}\cdots f_{i_k,\lambda_k},$ where $\mathbf{p}_{\lambda_t,i_t}$ is an element with simple form, $t\in
[1,k]$.
\end{prop}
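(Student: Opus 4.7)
The plan is to prove the proposition by induction on the rising point $k$ of $\alpha\in P$, with the base case provided by Corollary~\ref{coro:1or2} (rising points $1$ and $2$) and the inductive step using the decomposition of Lemma~\ref{decomposition lemma} together with the consistency criterion of Lemma~\ref{lem:consind}. Rising point $1$ means $\alpha=\varepsilon$, which is already known to be consistent (and $f_{j,\lambda}=1_{\overline{H}}$ via Lemma~\ref{lem:idgens}, the empty product of simple-form generators). Rising point $2$ elements have simple form, so consistency is given by Corollary~\ref{coro:simple} and the conclusion about generators is trivial.

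For the inductive step, assume the proposition holds for all elements of $P$ with rising point strictly less than $k$, where $3\leq k\leq r+1$, and let $\alpha\in P$ have rising point $k$. By Lemma~\ref{decomposition lemma} we may write $\alpha=\beta\gamma$ where $\gamma$ has simple form and $\beta$ has rising point at most $k-1$; note that $\beta\in H$ and $\gamma\in H$. By Lemma~\ref{lem:all} (applicable since $r\leq n-2$) the simple-form element $\gamma$ always lies in $P$; by the inductive hypothesis $\beta$ also lies in $P$ (being expressible as needed) and is consistent, while $\gamma$ is consistent by Corollary~\ref{coro:simple}.

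To apply Lemma~\ref{lem:consind}, I must show that for each realisation $\alpha=\mathbf{p}_{\lambda j}$ there exist $(t,\lambda),(j,\mu)\in K$ with $\beta=\mathbf{p}_{\lambda t}$, $\gamma=\mathbf{p}_{\mu j}$, and $f_{j,\lambda}=f_{j,\mu}f_{t,\lambda}$. Write $\lambda=(u_1,\dots,u_r)$ and let $\mathbf{r}_j$ lie in district $(l_1^j,\dots,l_r^j)$. The element $\mu$ is constructed by replacing the entries of $\lambda$ that realise the specific positions altered by $\gamma$ (as given by the case analysis in Lemma~\ref{decomposition lemma}) with the corresponding $l$-indices of $\mathbf{r}_j$, so that $\mathbf{q}_\mu\mathbf{r}_j=\gamma$; this is possible because in each of the cases (0) and (i)--(iv) the entries of $\gamma$ outside the local simple-form block already agree with the identity, and the block positions involve indices that do occur as images of either district points or $u$-coordinates of $\mathbf{r}_j$. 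Then $\mathbf{r}_t$ is defined by modifying $\mathbf{r}_j$ on the $u_i$'s to realise $x_{u_i}\mathbf{r}_t=x_i\beta=x_i\alpha\gamma^{-1}$, fixing the district of $\mathbf{r}_j$ (possibly after adjusting one $l$-index, as in Step D of Section~\ref{sec:restricted}), so that $\mathbf{r}_t\in\Theta$, $\mathbf{q}_\lambda\mathbf{r}_t=\beta$ and simultaneously $\mathbf{q}_\mu\mathbf{r}_t=\varepsilon$. This yields the sub-matrix
\[
\begin{pmatrix} \mathbf{p}_{\lambda t} & \mathbf{p}_{\lambda j} \\ \mathbf{p}_{\mu t} & \mathbf{p}_{\mu j}\end{pmatrix}=\begin{pmatrix} \beta & \alpha \\ \varepsilon & \gamma\end{pmatrix},
\]
which satisfies $\mathbf{p}_{\lambda t}^{-1}\mathbf{p}_{\lambda j}=\beta^{-1}\alpha=\gamma=\mathbf{p}_{\mu t}^{-1}\mathbf{p}_{\mu j}$. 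Corollary~\ref{lem:ssindr} then supplies the relation $f_{t,\lambda}^{-1}f_{t,\mu}=f_{j,\lambda}^{-1}f_{j,\mu}$, and since Lemma~\ref{lem:idgens} gives $f_{t,\mu}=1_{\overline{H}}$, this rearranges to $f_{j,\lambda}=f_{j,\mu}f_{t,\lambda}$ as required.

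Lemma~\ref{lem:consind} now yields consistency of $\alpha$. For the second assertion, $f_{j,\mu}$ corresponds to the simple-form element $\gamma$ and so by Corollary~\ref{coro:simple} is itself a simple-form generator (well-defined independently of the realisation), while $f_{t,\lambda}$, corresponding to $\beta$ of lower rising point, expands by the inductive hypothesis as a product of simple-form generators; concatenating gives the required expression for $f_{j,\lambda}$. The chief technical obstacle will be verifying, in each of the cases (0) and (i)--(iv) of Lemma~\ref{decomposition lemma}, that the witness $\mathbf{r}_t$ constructed from $\mathbf{r}_j$ genuinely lies in $\Theta$---that is, that its district is ordered correctly (respecting Lemma~\ref{property of r_{i}}) and that the defining equations $x_{u_i}\mathbf{r}_t=x_i\beta$ and $x_{v_i}\mathbf{r}_t=x_i$ do not conflict when some $u_i$ and $v_{i'}$ coincide with $l$-indices; this requires a careful case-by-case inspection tracking how the indices $i,j_1,\dots,j_{r-k},l$ appearing in the definition of rising point interact with the $u$- and $l$-coordinates of the given $(j,\lambda)$.
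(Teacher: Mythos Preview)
Your approach is the paper's: induction on rising point, decomposition via Lemma~\ref{decomposition lemma}, and for each realisation $\alpha=\mathbf{p}_{\lambda j}$ the construction of a singular $2\times 2$ submatrix $\begin{pmatrix}\beta & \alpha\\ \varepsilon & \gamma\end{pmatrix}$ so that Lemma~\ref{lem:consind} applies. Two small corrections: Lemma~\ref{lem:all} does not apply for general $r\le n-2$ (it needs $2r\le n$); that simple forms lie in $P$ is noted directly at the start of Section~\ref{sec:simpleforms}. And your claim that $\beta\in P$ ``by the inductive hypothesis'' is circular---membership of $\beta$ in $P$ is not part of the hypothesis but a by-product of the construction: once you exhibit $\mathbf{r}_t$ with $\mathbf{p}_{\lambda t}=\beta$, you have $\beta\in P$, and only then does the inductive hypothesis yield its consistency.

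More substantively, your description of how to build $\mathbf{r}_t$ (``modifying $\mathbf{r}_j$ on the $u_i$'s \ldots\ fixing the district of $\mathbf{r}_j$, possibly after adjusting one $l$-index'') is correct only in Cases~(0) and~(i) of Lemma~\ref{decomposition lemma}. There a single coordinate $u_m$ or $u_l$ is altered, and since $x_{u_m}\mathbf{r}_j=ax_r$ (resp.\ $ax_{k-1}$) with $a\neq 1_G$, that coordinate is guaranteed not to be a district point of $\mathbf{r}_j$, so the district survives. In Cases~(ii)--(iv), where $i<l$, several of the values $x_{u_w}\mathbf{r}_j$ have trivial coefficient and the corresponding $u_w$ may well equal some $l^j_m$; the naive modification you describe would then violate $\mathbf{r}_t\in\Theta$. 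The paper's construction in these cases is genuinely different: one defines $\mathbf{r}_s$ to agree with $\mathbf{r}_j$ on $[1,u_i)$, to satisfy $x_{u_w}\mathbf{r}_s=x_w\beta$ for $w\ge i$, and to send all remaining generators to $x_1$. The resulting district is a hybrid $(l_1,\dots,l_{k-1+h},u_{j_h},\dots,u_{j_{r-k}})$ for a suitable $h$, and verifying $\mathbf{r}_s\in\Theta$ relies on the observation after Lemma~\ref{decomposition lemma} that $x_p\beta=x_p\alpha$ for all $p<i$ in these cases. You correctly flag this verification as the chief obstacle, but the construction you sketch would not survive it in Cases~(ii)--(iv).
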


\begin{proof}
We proceed by induction on the rising point of $\alpha$. If $\alpha$ has rising point $1$ or $2$, and
$\mathbf{p}_{\lambda i}=\alpha$, then the result is true by Corollary~\ref{coro:1or2} and the comments preceding it.
Suppose for induction that the rising point of $\alpha$ is $k\geq 3$, and the result is true for all $\beta\in H$ with
rising point strictly less than $k$ and all $f_{i,\mu}\in F$ where $\mathbf{p}_{\mu i}=\beta$.

 We proceed on a case by case basis, using  $\gamma$ and $\beta$ as defined in Lemma~\ref{decomposition lemma}. Since $\gamma$ has simple form, it is consistent by Corollary~\ref{coro:simple} and as $\beta$ has rising point strictly less
than $k$, $\beta$ is consistent by our inductive hypothesis.

Suppose that $\alpha=\mathbf{p}_{\lambda j}$ where $\lambda=(u_1,\hdots, u_r)$ and $\mathbf{r}_j$ lies in district
$(l_1,\hdots,l_r)$.

\medskip

\noindent{\em Case (0)} If $k=r+1$, then we have $x_m\alpha=ax_r$ for some $a\neq 1_G$.
 We now define $\mathbf{r}_{t}$ by $x_{u_{m}}\mathbf{r}_{t}=x_{r}$ and $x_{s}\mathbf{r}_{t}=x_{s}\mathbf{r}_{j}$, for other $s\in [1,n]$. As $x_{u_{m}}\mathbf{r}_{j}=ax_{r}$, it is easy to see that $\mathbf{r}_{t}\in\Theta$.  Notice that $l_{r-1}<l_r=l_{m\overline{\alpha}}<u_m$. Then by setting $\mu=(1,l_{2},\cdots,l_{r-1},u_{m})$ we have $$\left(\begin{array}{cc}
\mathbf{p}_{\lambda t} & \mathbf{p}_{\lambda j}\\
\mathbf{p}_{\mu t} & \mathbf{p}_{\mu j} \end{array}\right)=\left(\begin{array}{cc}
\beta & \alpha\\
\epsilon & \gamma \end{array}\right)$$
and our presentation gives $f_{j,\lambda}=f_{j,\mu}f_{t,\lambda}$.

\medskip

We  now suppose that $k\leq r$. By definition of
 rising point there exists a sequence $$1\leq i<j_{1}<j_{2}\cdots <j_{r-k}\leq r$$ such that $$x_{i}\alpha=x_{k}, x_{j_{1}}\alpha=x_{k+1}, x_{j_{2}}\alpha=x_{k+2}, \cdots, x_{j_{r-k}}\alpha=x_{r}$$
 such that if $l\in [1,r]$ with  $x_{l}\alpha=ax_{k-1}$, then if    $l<i$ we must have $a\neq 1_G$.

We  consider the following cases:

\medskip

\noindent{\em Case (i)} If $l<i$ we define $\mathbf{r}_{t}$ by $x_{u_{l}}\mathbf{r}_{t}=x_{k-1}$ and for other $p\in
[1,n]$, $x_{p}\mathbf{r}_{t}=x_{p}\mathbf{r}_{j}$. As by assumption $x_{u_{l}}\mathbf{r}_{j}=x_{l}\alpha=ax_{k-1}$,
clearly $\mathbf{r}_{t}\in\Theta $.  Then by putting
$$\mu=(1,l_{2},\cdots,l_{k-2},u_{l},u_{i},u_{j_{1}},\cdots,u_{j_{r-k}})$$ we have $$\left(\begin{array}{cc}
\mathbf{p}_{\lambda t} & \mathbf{p}_{\lambda j}\\
\mathbf{p}_{\mu t} & \mathbf{p}_{\mu j} \end{array}\right)=\left(\begin{array}{cc}
\beta & \alpha\\
\epsilon & \gamma \end{array}\right)$$
which implies $f_{j,\lambda}=f_{j,\mu}f_{t,\lambda}.$

\medskip

\noindent{\em Case (ii)}  If $i<l<j_{1}$ we define $\mathbf{r}_s$ by
\[x_p\mathbf{r}_s=x_p\mathbf{r}_j\mbox{ for }p<u_i, \, x_{u_w}\mathbf{r}_s=x_w\beta\mbox{ for }i\leq w\leq r\]
\[\mbox{ and }x_{v}\mathbf{r}_{s}=x_{1}\mbox{ for all other }v\in [1,n].\] We must argue that $\mathbf{r}_s\in\Theta$.
Note that from the comment following Lemma~\ref{decomposition lemma}, for any $v<i$ we have that
\[x_{u_v}\mathbf{r}_s=x_{u_v}\mathbf{r}_j=x_v\alpha=x_v\beta,\]
so that in particular, rank $\mathbf{r}_s=r$. Further,
$$x_{u_i}\mathbf{r}_s=x_i\beta=x_{k-1},\ x_{u_l}\mathbf{r}_s=x_l\beta=x_k,$$
$$x_{u_{j_1}}\mathbf{r}_s=x_{j_1}\beta=x_{k+1},\ \hdots\ ,  x_{u_{j_{r-k}}}\mathbf{r}_s=x_{j_{r-k}}\beta=x_{r}$$ so that $$\langle x_{u_i},x_{ u_l},x_{u_{j_1}},
\cdots,x_{u_{j_{r-k}}}\rangle\mathbf{r}_s=\langle x_{k-1},x_{k},\cdots,x_r\rangle.$$ Thus for any $v\neq \{i, l, j_1,
\cdots,j_{r-k}\},$ $x_{u_v}\mathbf{r}_s=x_v\beta\in \langle x_1, \cdots,  x_{k-2}\rangle.$

As $x_{u_i}\mathbf{r}_j=x_k$, we have $1=l_1<l_2<\cdots<l_{k-1}<l_k\leq u_i.$ Let $h$ be the largest number with
$$1=l_1<l_2<\cdots<l_{k-1}<l_k<l_{k+1}<\cdots<l_{(k-1)+h}<  u_i.$$  Clearly here we have $h\in [0,r-k+1]$.  Now we
claim that $\mathbf{r}_s\in \Theta$ and lies in district $$(l_1, l_2, \cdots, l_{(k-1)+h}, u_{j_{h}}, u_{j_{h+1}},
\cdots, u_{j_{r-k}}).$$
 To simplify our notation we put $$(l_1, l_2, \cdots, l_{(k-1)+h}, u_{j_{h}}, u_{j_{h+1}}, \cdots, u_{j_{r-k}})=(z_1, z_2, \cdots, z_{(k-1)+h}, z_{k+h}, \cdots, z_r),$$
 where $j_0=l$. Clearly, by the definition of $\mathbf{r}_s$, we have $x_{z_v}\mathbf{r}_s=x_v$ for all $v\in [1,r].$ Hence, to show $\mathbf{r}_s\in \Theta$, by the definition we only need to argue that for any $m\in [1,n]$ and $b\in G$, $x_m\mathbf{r}_s=bx_t$ implies $m\geq z_t.$

  Suppose that $t\in [1,(k-1)+h]$, so that  $z_t=l_t<u_i$. If $m<z_t$, then  from the definition of $\mathbf{r}_s$
  we have  $x_m\mathbf{r}_s=x_m\mathbf{r}_j,$ so that $x_m\mathbf{r}_j=bx_t.$ As $\mathbf{r}_j\in \Theta$ and $x_{l_t}\mathbf{r}_j=x_t,$ we have $z_t=l_t\leq m,$ a contradiction, and we deduce that  $m\geq z_t.$

  Suppose now that $t\in [k+h, r].$ Note that $m\geq u_i;$ because, if $m<u_i,$ then $x_m\mathbf{r}_j=x_m\mathbf{r}_s=bx_t.$ As $\mathbf{r}_j\in \Theta$, $l_t\leq m<u_i$ and so $t\leq (k-1)+h$, a contradiction.  Thus $m\geq u_i$. Now,  by the definition of $\mathbf{r}_s$, we know there is exactly one possibility that $x_m\mathbf{r}_s=bx_t$ with $t\in [k+h, r],$ that is, $x_{z_t}\mathbf{r}_s=x_t,$ so that $m=z_t$ and $b=1.$ Thus $\mathbf{r}_s\in \Theta$.

 Now set $$\eta=(1,l_{2},\cdots,l_{k-2},u_{i},u_{l},u_{j_{1}},\cdots,u_{j_{r-k}})$$ then we have
$$\left(\begin{array}{cc}
\mathbf{p}_{\lambda s} & \mathbf{p}_{\lambda j}\\
\mathbf{p}_{\eta s} & \mathbf{p}_{\eta j} \end{array}\right)=\left(\begin{array}{cc}
\beta & \alpha\\
\epsilon & \gamma \end{array}\right),$$ which
implies $f_{j,\lambda}=f_{j,\eta}f_{s,\lambda}.$

\medskip

\noindent{\em Case (iii)} If $j_{r-k}<l$, then, defining $\mathbf{r}_s$ as in {\em Case (ii)}, a similar argument gives that
$\mathbf{r}_s\in\Theta$ and $x_{u_v}\mathbf{r}_s=x_v\beta$ for all $v\in [1,r]$ (of course here $\beta$ is defined
differently to that given in {\em Case (ii)} and the district of $\mathbf{r}_s$ will have a different appearance.). Moreover, by setting
$$\delta=(1,l_{2},\cdots,l_{k-2},u_{i},u_{j_{1}},\cdots,u_{j_{r-k}},u_{l})$$ we have
$$\left(\begin{array}{cc}
\mathbf{p}_{\lambda s} & \mathbf{p}_{\lambda j}\\
\mathbf{p}_{\delta s} & \mathbf{p}_{\delta j} \end{array}\right)=\left(\begin{array}{cc}
\beta & \alpha\\
\epsilon & \gamma \end{array}\right).$$
implying  $f_{j,\lambda}=f_{j,\delta}f_{s,\lambda}$.

\medskip

\noindent{\em Case (iv)} If $j_{u}<l<j_{u+1}$ for some $u\in[1,r-k-1]$, then again by defining $\mathbf{r}_s$ as in
{\em Case (ii)}, we have $\mathbf{r}_s\in\Theta$ and $x_{u_v}\mathbf{r}_s=x_v\beta$ for all $v\in [1,r]$. Take
 $$\sigma=(1,l_2,\cdots,l_{k-2},u_i,u_{j_1}, u_{j_u}, u_l, u_{j_{u+1}}, \cdots, u_{j_{r-k}}).$$ Then we have
$$\left(\begin{array}{cc}
\mathbf{p}_{\lambda s} & \mathbf{p}_{\lambda j}\\
\mathbf{p}_{\sigma s} & \mathbf{p}_{\sigma j} \end{array}\right)=\left(\begin{array}{cc}
\beta & \alpha\\
\epsilon & \gamma \end{array}\right)$$
so that $f_{j,\lambda}=f_{j,\sigma}f_{s,\lambda}$.

In each of the cases above, the consistency of $\alpha$ follows from Lemma~\ref{lem:consind}. The result now follows by induction.
\end{proof}

In view of Lemma \ref{lem:consind}, we can now denote all generators $f_{i,\lambda}$ with $\mathbf{p}_{\lambda
i}=\alpha$ by $f_{\alpha}$, where $(i,\lambda)\in K$.

\section{The main theorem}\label{sec:main}

Our eventual aim is to show that $\overline{H}$ is isomorphic to $H$ and hence to the wreath product $G\wr \mathcal{S}_r$. With this  in mind,
given the knowledge we have gathered concerning the generators $f_{i,\lambda}$, we first specialise the general presentation given in Theorem~\ref{them:pres} to our specific situation.

%At this point we identify elements of $H$ with those of $\aut F_r(G)$, writing $\epsilon$ for the identity.
We will say that for $\phi,\varphi,\psi,\sigma\in P$ the quadruple $(\phi,\varphi,\psi,\sigma)$ is {\em singular} if
$\phi^{-1}\psi=\varphi^{-1}\sigma$ and we can find $i,j\in I, \lambda,\mu\in \Lambda$ with $\phi=\mathbf{p}_{\lambda
i}, \varphi =\mathbf{p}_{\mu i}, \psi=\mathbf{p}_{\lambda j}$ and $\sigma=\mathbf{p}_{\mu j}$.

%{\em temp mnemonic Let $P=(\mathbf{p}_{\lambda i})$ be the sandwich matrix of the completely 0-simple semigroup $D_{r}^{0}$. Then (R3) in
%Theorem~\ref{them:pres} can be restated as $f_{i,\lambda}^{-1}f_{i,\mu}=f_{k,\lambda}^{-1}f_{k,\mu}$ if  $ \mathbf{p}_{\lambda %i}^{-1}\mathbf{p}_{\lambda k}=\mathbf{p}_{\mu i}^{-1}\mathbf{p}_{\mu k}$.

In the sequel, we denote the free group on a set $X$ by $\widetilde{X}$. For convenience, we use, for example, the same symbol
$f_{i,\lambda}$ for an element of $\widetilde{F}$ and $\overline{H}$. We hope that the context will prevent ambiguities from arising.

\begin{lem} \label{presentation}
Let $\overline{\overline{H}}$ be the group given by the presentation $\mathcal{Q}=\langle S: \Gamma\rangle$ with generators: $$S=\{f_{\phi}:~ \phi \in P\}$$ and with the defining relations $\Gamma:$

$(P1)$ $f_\phi^{-1}f_\varphi=f_\psi^{-1}f_\sigma $ where $(\phi,\varphi,\psi,\sigma)$ is singular;

$(P2)$ $f_{\epsilon}=1.$\\
Then $\overline{\overline{H}}$ is isomorphic to $\overline{H}$.
% of a rank $r$ idempotent in  $\ig(E)$, where $E$ is the biordered set of $\en F_n(G).$
\end{lem}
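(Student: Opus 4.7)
The strategy is to show that the presentation $\mathcal{Q}$ is equivalent to the presentation $\mathcal{P} = \langle F : \Sigma\rangle$ defining $\overline{H}$ by exhibiting mutually inverse homomorphisms between the two groups. Throughout, the key tool is Proposition~\ref{prop:eqofgens}, which tells us that $f_{i,\lambda}$ in $\overline{H}$ depends only on the sandwich entry $\mathbf{p}_{\lambda i}$, so that the notation $f_\alpha$ for $\alpha\in P$ is unambiguous; this is precisely the bridge needed between the index sets of $F$ and $S$.

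First I would define a map $\Phi:\overline{\overline{H}}\to\overline{H}$ on generators by $f_\phi\mapsto f_{i,\lambda}$ for any choice of $(i,\lambda)\in K$ with $\mathbf{p}_{\lambda i}=\phi$. Well-definedness on generators is exactly the content of Proposition~\ref{prop:eqofgens}. To verify $(P2)$, note that $\mathbf{p}_{\omega(1),1}=\varepsilon$, so $f_\varepsilon$ corresponds to $f_{1,\omega(1)}$ which is trivial in $\overline{H}$ by $(R2)$, and more generally Lemma~\ref{lem:idgens} handles any $(i,\lambda)$ with $\mathbf{p}_{\lambda i}=\varepsilon$. To verify $(P1)$, observe that if $(\phi,\varphi,\psi,\sigma)$ is singular with $\phi=\mathbf{p}_{\lambda i},\varphi=\mathbf{p}_{\mu i},\psi=\mathbf{p}_{\lambda j},\sigma=\mathbf{p}_{\mu j}$ and $\mathbf{p}_{\lambda i}^{-1}\mathbf{p}_{\lambda j}=\mathbf{p}_{\mu i}^{-1}\mathbf{p}_{\mu j}$, then by Corollary~\ref{lem:ssindr} the reformulated $(R3)$ gives exactly $f_{i,\lambda}^{-1}f_{i,\mu}=f_{j,\lambda}^{-1}f_{j,\mu}$ in $\overline{H}$, which under the $f_\alpha$ notation is the required relation.

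Next I would construct the inverse $\Psi:\overline{H}\to\overline{\overline{H}}$ by $f_{i,\lambda}\mapsto f_{\mathbf{p}_{\lambda i}}$, and check that the three families of relations in $\Sigma$ hold. Relation $(R2)$ is immediate since $\mathbf{p}_{\omega(i),i}=\varepsilon$ (as noted after Lemma~\ref{lem:righttrans}), so $f_{i,\omega(i)}\mapsto f_\varepsilon=1$ by $(P2)$. Relation $(R3)$ in its reformulated version from Corollary~\ref{lem:ssindr} translates directly into $(P1)$. The one non-automatic point is $(R1)$: if $\overline{h}_\lambda\overline{e}_{i\mu}=\overline{h}_\mu$ then we need $f_{\mathbf{p}_{\lambda i}}=f_{\mathbf{p}_{\mu i}}$ in $\overline{\overline{H}}$, and for this it suffices to show $\mathbf{p}_{\lambda i}=\mathbf{p}_{\mu i}$. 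This I expect to be the one genuine content check: the hypothesis translates (via the faithfulness of the $\overline{E}^{*}$-action on the $\mathcal{R}$-class of $\overline{\varepsilon}$ noted in Section~\ref{sec:pres}) into $\mathbf{q}_\lambda e_{i\mu}=\mathbf{q}_\mu$ in $\en F_n(G)$; then, since $e_{i\mu}$ and $\mathbf{r}_i$ are both in $R_i$, standard semigroup theory gives $e_{i\mu}\mathbf{r}_i=\mathbf{r}_i$, so
\[
\mathbf{p}_{\mu i}=\mathbf{q}_\mu\mathbf{r}_i=\mathbf{q}_\lambda e_{i\mu}\mathbf{r}_i=\mathbf{q}_\lambda\mathbf{r}_i=\mathbf{p}_{\lambda i},
\]
as needed.

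Finally, $\Phi$ and $\Psi$ agree with mutually inverse bijections on the generating sets: $\Psi\Phi(f_\phi)=\Psi(f_{i,\lambda})=f_{\mathbf{p}_{\lambda i}}=f_\phi$, and likewise $\Phi\Psi(f_{i,\lambda})=f_{i,\lambda}$ up to the identifications from Proposition~\ref{prop:eqofgens}. Hence both maps extend to mutually inverse group homomorphisms, establishing $\overline{\overline{H}}\cong\overline{H}$. The main obstacle is the verification of $(R1)$ under $\Psi$, which is where the specific choice of the Schreier system and the structure of the idempotents $e_{i\mu}$ in the free $G$-act endomorphism monoid really come into play; everything else is a routine translation between the two presentations.
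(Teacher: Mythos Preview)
Your proposal is correct and follows essentially the same approach as the paper: both construct mutually inverse morphisms between $\overline{H}$ and $\overline{\overline{H}}$ by sending $f_{i,\lambda}\leftrightarrow f_{\mathbf{p}_{\lambda i}}$, with well-definedness coming from Proposition~\ref{prop:eqofgens} (consistency), and the only substantive check being that $(R1)$ forces $\mathbf{p}_{\lambda i}=\mathbf{p}_{\mu i}$ via $\mathbf{q}_\lambda\varepsilon_{i\mu}=\mathbf{q}_\mu$ and $\varepsilon_{i\mu}\mathbf{r}_i=\mathbf{r}_i$. The only cosmetic difference is that your $\Phi$ and $\Psi$ are the paper's $\overline{\boldsymbol{\psi}}$ and $\overline{\boldsymbol{\theta}}$ respectively, i.e.\ you build the two maps in the opposite order.
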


\begin{proof}
From Theorem \ref{them:pres}, we  know that $\overline{H}$ is given by the presentation  $\mathcal{P}=\langle F:
\Sigma\rangle$,  where $F=\{f_{i,\lambda}:~ (i,\lambda)\in K\}$ and $\Sigma$ is the set of relations as defined in (R1)
-- (R3), and  where the function $\omega$ and the Schreier system $\{ \mathbf{h}_{\lambda}:\lambda\in \Lambda\}$ are
fixed as in Section~\ref{sec:pres}. Note that (R3) is reformulated in Corollary~\ref{lem:ssindr}.

By freeness of the generators we may define a morphism $\boldsymbol{\theta}:\widetilde{F}\rightarrow
\overline{\overline{H}}$ by $f_{i,\lambda}\boldsymbol{\theta}= f_\phi$, where $\phi=\mathbf{p}_{\lambda i}$. We show
that $\Sigma\subseteq \ker\boldsymbol{\theta}$. It is clear from (P1) that relations of the form (R3) lie in
$\ker{\boldsymbol\theta}$.

Suppose first that $\mathbf{h}_{\lambda} \varepsilon_{i\mu}=\mathbf{h}_{\mu}$ in $\overline{E}^*$. Then $\epsilon \mathbf{h}_{\lambda}
\varepsilon_{i\mu}=\epsilon \mathbf{h}_{\mu}$ in $\en F_n(G)$, so that from Lemma~\ref{lem:righttrans},
$\mathbf{q}_{\lambda}\varepsilon_{i\mu}=\mathbf{q}_{\mu}$. Hence
$\mathbf{q}_{\mu}\mathbf{r}_i=\mathbf{q}_{\lambda}\varepsilon_{i\mu}\mathbf{r}_i=\mathbf{q}_{\lambda}\mathbf{r}_i$, so that
$\mathbf{p}_{\mu i}=\mathbf{p}_{\lambda i}$ and $f_{i,\lambda}{\boldsymbol \theta}=f_{i,\mu}{\boldsymbol \theta}$. Now
suppose that $i\in I$; we have remarked that $\mathbf{p}_{\omega(i)i}=\epsilon$, so that $f_{i,\omega(i)}{\boldsymbol
\theta}=f_\epsilon=1{\boldsymbol\theta}$.

We have shown that $\Sigma\subseteq \ker{\boldsymbol \theta}$ and so  there exists a morphism $\overline{{\boldsymbol
\theta}}: {\overline{H}}\rightarrow {\overline{\overline{H}}}$ such that $f_{i, \lambda}\overline{{\boldsymbol
\theta}}=f_{\phi}$ where $\phi=\mathbf{p}_{\lambda i}$.

Conversely, we define a map ${\boldsymbol \psi}: \widetilde{S}\rightarrow {\overline{H}}$ by $f_\phi{\boldsymbol
\psi}=f_{i,\lambda}$, where $\phi=\mathbf{p}_{\lambda i}$. By Lemma~\ref{lem:consind}, ${\boldsymbol \psi}$ is well
defined. Since $f_\epsilon{\boldsymbol \psi}=f_{i,\lambda}$ where $\mathbf{p}_{\lambda i}=\epsilon$, we have
$f_\epsilon{\boldsymbol \psi}=1_{\overline{H}}$ by Lemma~\ref{lem:idgens}. Clearly relations (P1) lie in
$\ker{\boldsymbol \psi}$, so that $\Gamma\subseteq \ker{\boldsymbol \psi}$. Consequently, there is a morphism
$\overline{{\boldsymbol \psi}}: \overline{\overline{H}}\rightarrow \overline{H}$ such that
$f_\phi\overline{{\boldsymbol \psi}}=f_{i,\lambda}$, where $\phi=\mathbf{p}_{\lambda i}$.

It is clear that $\overline{{\boldsymbol \theta}}\,\overline{{\boldsymbol \psi}}$ and
$\overline{{\boldsymbol\psi}}\,\overline{{\boldsymbol\theta}}$ are, respectively, the identity maps on the generators of $\overline{H}$ and $ \overline{\overline{H}}$, respectively. It follows immediately that they are mutually inverse isomorphisms.
\end{proof}

We now  recall  the presentation of $G\wr \mathcal{S}_r$ obtained by Lavers \cite{Lavers:1998}. In fact, we translate his presentation to one for
our group $H$.

We begin by defining  the following elements of $H$:
for $a\in G$ and for $ 1\leq i\leq r$ we put $$\iota_{a,i}=\left(\begin{array}{cccccccccccccccc}
x_{1} & \cdots & x_{i-1} & x_{i}  & x_{i+1} & \cdots & x_{r} \\
x_{1} & \cdots & x_{i-1} & ax_{i} & x_{i+1} & \cdots & x_{r}\end{array}\right);$$  for $1\leq k\leq r-1$ we put

\begin{align*}
 (k\ k+1\cdots k+m)=\left(\begin{array}{ccccccccccccc}
x_{1} & \cdots & x_{k-1} & x_{k} & \cdots & x_{k+m-1} & x_{k+m} \\
x_{1} & \cdots & x_{k-1} & x_{k+1} & \cdots & x_{k+m} & x_{k} \end{array}\right. & \\
\left.\begin{array}{ccc}
x_{k+m+1} &\cdots & x_{r} \\
x_{k+m+1} &\cdots & x_{r}\end{array}\right)
\end{align*}
and we denote $(k\ k+1)$ by $\tau_k$.

%According to Ru\v{s}kuc \cite{Nik:1995}, the wreath product $G\wr \mathcal{S}_r$ has a set of generators $$\{ \tau_i, %\iota_{a,j}:~a\in G, 1\leq j\leq r, 1\leq i\leq r-1\}$$ and regarding to this set of generators, $G\wr S_r$ has the following %presentation:

It is clear that $G^r$ has presentation
$\mathcal{V}=\langle Z:~\Pi \rangle$, with generators
\[Z=\{ \iota_{a,i}:i\in [1,r],a\in G\}\]
and defining relations $\Pi$ consisting of (W4) and (W5) below. Using a standard presentation for $\mathcal{S}_r$  with generators the transpositions $\tau$ and relations $(W1)$, $(W2)$ and $(W3)$, we employ the recipe of \cite{Lavers:1998} to obtain:

\begin{lem}\label{presentation of wreath product}%\cite{Lavers:1998}
The group $H$
%wreath product $G\wr \mathcal{S}_r$
 has a presentation $\mathcal{U}=\langle Y:~\Upsilon \rangle$, with generators
$$Y=\{\tau_i, \iota_{a,j}: ~1\leq i\leq r-1,\ 1\leq j\leq r, a\in G\}$$
and  defining relations $\Upsilon$:

$(W1)$ $\tau_i\tau_i=1$, $1\le i\leq r-1;$

$(W2)$ $\tau_i\tau_j=\tau_j\tau_i$, $j\pm 1\neq i\neq j$;

$(W3)$ $\tau_i\tau_{i+1}\tau_i=\tau_{i+1}\tau_i\tau_{i+1}$, $1\leq i\leq r-2$;

$(W4)$ $\iota_{a,i}\iota_{b,j}=\iota_{b,j}\iota_{a,i}$, $a,b\in G$ and $1\leq i\neq j\leq r$;

$(W5)$ $\iota_{a,i}\iota_{b,i}=\iota_{ab,i}$, $1\leq i\leq r$ and $a,b\in G$;

$(W6)$ $\iota_{a,i}\tau_j=\tau_j\iota_{a,i},$ $1\leq i\neq j, j+1\leq r$;

$(W7)$ $\iota_{a,i}\tau_i=\tau_i\iota_{a, i+1},$ $1\leq i\leq r-1$ and $a\in G.$
\end{lem}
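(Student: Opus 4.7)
The plan is to reduce the assertion to Lavers' recipe for presenting a wreath product \cite{Lavers:1998}. That recipe takes as input a presentation $\langle X : R\rangle$ of a group $K$ together with the standard Coxeter presentation of $\mathcal{S}_r$, and outputs a presentation of $K \wr \mathcal{S}_r$ whose generators are the $r$ copies $X \times [1,r]$ of $X$ together with the adjacent transpositions, and whose relations are (a) the Coxeter relations for $\mathcal{S}_r$, (b) the $r$ copies of $R$, one per coordinate, (c) the pairwise commutativity of generators in distinct coordinates, and (d) the rules describing how adjacent transpositions permute the coordinates. Specializing this to $K = G$ with the ``multiplication table'' presentation (whose relations are exactly (W5)), Lavers' output is literally the presentation $\mathcal{U}$. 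It thus suffices to verify that the elements $\tau_i, \iota_{a,j} \in H$ satisfy the relations $\Upsilon$ and that they generate $H$.

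The verification is a direct computation in $\en F_n(G)$, using the isomorphism $\boldsymbol{\psi}|_H : H \to G \wr \mathcal{S}_r$ of Lemma~\ref{lem:autfr} to identify $\tau_i$ with $(1_G, \ldots, 1_G;\, (i\; i{+}1))$ and $\iota_{a,j}$ with the ``slot element'' $(1_G, \ldots, a, \ldots, 1_G;\, \mathrm{id})$ having $a$ in coordinate $j$. Then (W1)--(W3) are the standard Coxeter relations for $\mathcal{S}_r$, (W4) and (W5) are immediate from coordinatewise multiplication in $G^r$, and (W6), (W7) record exactly the action of an adjacent transposition on coordinates: $\tau_j$ fixes slot $i$ whenever $i \notin \{j, j+1\}$, giving (W6), and swaps slot $j$ with slot $j{+}1$, which using $\tau_j^2 = 1$ rearranges to (W7). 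For generation, any element of $G \wr \mathcal{S}_r$ factors as
\[
(g_1,\ldots,g_r;\sigma) \;=\; \iota_{g_1,1}\,\iota_{g_2,2}\cdots\iota_{g_r,r}\cdot(1_G,\ldots,1_G;\sigma),
\]
and the second factor is a product of adjacent transpositions since those generate $\mathcal{S}_r$.

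Having verified that $\Upsilon$ holds in $H$ and that $Y$ generates $H$, the natural map from the group defined by $\mathcal{U}$ onto $H$ is surjective; Lavers' theorem asserts that it is injective as well, because the relations $\Upsilon$ are exactly the complete list of relations needed to pin down $G \wr \mathcal{S}_r$. The only potential obstacle is purely notational: one has to make sure that our two-row-picture convention and the direction of our semidirect product match those of \cite{Lavers:1998}, in particular that our (W7) (rather than, say, $\iota_{a,i+1}\tau_i = \tau_i \iota_{a,i}$) is the correct orientation; this is immediate from the calculation $\tau_i \cdot \iota_{a,i+1}$ in $\en F_n(G)$, which sends $x_i \mapsto x_{i+1} \mapsto a x_{i+1}$, matching $\iota_{a,i} \cdot \tau_i$ which sends $x_i \mapsto ax_i \mapsto ax_{i+1}$.
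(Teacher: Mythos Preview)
Your approach is essentially the same as the paper's: the paper simply notes that $G^r$ has the presentation with generators $\iota_{a,i}$ and relations (W4)--(W5), that $\mathcal{S}_r$ has the standard Coxeter presentation (W1)--(W3), and then invokes Lavers' recipe \cite{Lavers:1998} to obtain $\mathcal{U}$. Your write-up is a more detailed unpacking of exactly this, with the added verification that the $\tau_i,\iota_{a,j}$ satisfy $\Upsilon$ and generate $H$; this is fine and correct.
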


Now we turn to our maximal subgroup $\overline{H}.$ From Lemma~\ref{presentation}, we know that
$\overline{H}$ is isomorphic to $ \overline{\overline{H}}$, and it follows from the definition of the isomorphism and  Proposition~\ref{prop:eqofgens} that
$$ \overline{\overline{H}}=\langle f_{\alpha}:\  \alpha \mbox{\ has simple\ form}\rangle.$$
We now simplify our generators further. For ease in the remainder of the paper, it is convenient to use the following convention:
for $u,v\in [1,r+2]$ with $u<v$, we denote by $\neg (u,v)$ the $r$-tuple
\[(1,\cdots, u-1,u+1,\hdots, v-1,v+1,\cdots, r+2).\]

\begin{lem}\label{further decomposition}
Consider the element $$\alpha=\left(\begin{array}{cccccccccccccccc}
x_{1} & \cdots & x_{k-1} & x_{k} & \cdots & x_{k+m-1} & x_{k+m}& x_{k+m+1} &\cdots & x_{r} \\
x_{1} & \cdots & x_{k-1} & x_{k+1} & \cdots & x_{k+m} & ax_{k} & x_{k+m+1} &\cdots & x_{r}\end{array}\right)$$
in simple form, where $m\geq 1.$
Then $f_{\alpha}=f_{\gamma}f_{\beta}$ in $ \overline{\overline{H}}$, where $\beta=\iota_{a, k+m}$ and $\gamma=(k\ k+1\ \cdots\ k+m).$
\end{lem}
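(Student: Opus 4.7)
The plan is to deduce $f_{\alpha}=f_{\gamma}f_{\beta}$ directly from a single sandwich-matrix relation, by producing $i,j\in I$ and $\lambda,\mu\in \Lambda$ such that
\[\begin{pmatrix} \mathbf{p}_{\lambda i} & \mathbf{p}_{\lambda j}\\ \mathbf{p}_{\mu i} & \mathbf{p}_{\mu j}\end{pmatrix}=\begin{pmatrix} \beta & \alpha\\ \e & \gamma\end{pmatrix}.\]
Once this is in hand, relation (P1) (equivalently the recast of (R3) in Corollary~\ref{lem:ssindr}) gives $f_{\beta}^{-1}f_{\e}=f_{\alpha}^{-1}f_{\gamma}$, and since $f_{\e}=1_{\overline{\overline{H}}}$ by Lemma~\ref{lem:idgens}, this rearranges to $f_{\alpha}=f_{\gamma}f_{\beta}$. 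The singularity condition $\beta^{-1}\alpha=\e^{-1}\gamma$ is the algebraic identity $\alpha=\beta\gamma$ in $H$, which is immediate: the only non-trivial action of $\beta$ is $x_{k+m}\mapsto ax_{k+m}$, which $\gamma$ then carries to $ax_k$, while the two compositions agree everywhere else.

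To build the submatrix I exploit the hypothesis $r\leq n-2$, which guarantees the availability of the two positions $k+m+1$ and $k+m+2$ strictly between $k+m$ and $r+2\leq n$. Set
\[\lambda=(1, \ldots, k-1, k+1, \ldots, k+m, k+m+2, k+m+3, \ldots, r+2)\]
and
\[\mu=(1, \ldots, k-1, k+1, \ldots, k+m, k+m+1, k+m+3, \ldots, r+2);\]
these are strictly increasing $r$-tuples in $[1,n]$ agreeing except at position $k+m$. Define an element $\mathbf{r}_j\in\Theta$ to act as the identity on $[1,k+m]$, to send $x_{k+m+1}\mapsto x_k$ and $x_{k+m+2}\mapsto ax_k$, to shift via $x_{l+2}\mapsto x_l$ on $[k+m+3, r+2]$, and to collapse $[r+3,n]$ onto $x_1$. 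Define $\mathbf{r}_i\in\Theta$ to act as the identity on $[1,k-1]$, to perform the left-shift $x_{k+l+1}\mapsto x_{k+l}$ for $l\in[0,m-1]$, to send $x_{k+m+1}\mapsto x_{k+m}$ and $x_{k+m+2}\mapsto ax_{k+m}$, to shift via $x_{l+2}\mapsto x_l$ on $[k+m+3, r+2]$, and to collapse every remaining index onto $x_1$.

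The main technical step is verifying that $\mathbf{r}_i$ and $\mathbf{r}_j$ genuinely lie in $\Theta$: the worry is that a block whose elements include some $ax_l$ with $a\neq 1_G$ could have that position as its minimum, violating the defining property of $\Theta$. For $\mathbf{r}_j$ the only delicate block is $B_k^{\mathbf{r}_j}=\{k, k+m+1, k+m+2\}$, whose minimum $k$ carries coefficient $1_G$ by design; the resulting district is $(1,\ldots, k+m, k+m+3, \ldots, r+2)$. For $\mathbf{r}_i$ the analogous block is $B_{k+m}^{\mathbf{r}_i}=\{k+m+1, k+m+2\}$, with minimum $k+m+1$ of coefficient $1_G$; its district is $(1, \ldots, k-1, k+1, \ldots, k+m+1, k+m+3, \ldots, r+2)$, with a slight adjustment in the edge case $k=1$ (where position $1$ itself maps to $x_1$). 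A routine coordinate-by-coordinate check of the four products $\mathbf{q}_\lambda\mathbf{r}_i,\ \mathbf{q}_\mu\mathbf{r}_i,\ \mathbf{q}_\lambda\mathbf{r}_j,\ \mathbf{q}_\mu\mathbf{r}_j$ then yields $\beta,\e,\alpha,\gamma$ respectively, and the lemma follows via (P1).
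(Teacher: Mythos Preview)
Your proof is correct and follows essentially the same approach as the paper. You choose the same $\lambda=\neg(k,k+m+1)$ and $\mu=\neg(k,k+m+2)$, your $\mathbf{r}_j$ coincides with the paper's $\mathbf{r}_t$, and your $\mathbf{r}_i$ agrees with the paper's $\mathbf{r}_s$ at every position except $x_k$ (you send it to $x_1$, the paper to $x_{k-1}$); since $k\notin\lambda,\mu$ this difference is immaterial, and indeed your choice handles the edge case $k=1$ more cleanly.
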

\begin{proof}
Define $\mathbf{r}_t$ by
\begin{align*}
\left(\begin{array}{cccccccccccc}
x_{1} & \cdots & x_{k-1} & x_{k} & x_{k+1} &\cdots & x_{k+m} & x_{k+m+1}& x_{k+m+2} & x_{k+m+3} &\cdots \\
x_{1} & \cdots & x_{k-1} & x_{k} & x_{k+1} &\cdots & x_{k+m} & x_{k} & ax_{k} & x_{k+m+1} &\cdots \end{array}\right. &
\\
\left.\begin{array}{ccccc}
\cdots & x_{r+2} & x_{r+3} & \cdots & x_n\\
\cdots & x_{r} & x_1 &\cdots & x_1\end{array}\right). &
\end{align*}
Let $\lambda=\neg(k,k+m+1)$ and $\mu=\neg(k,k+m+2)$. Then $\mathbf{p}_{\lambda t}=\alpha$ and $\mathbf{p}_{\mu
t}=\gamma.$

Next we define $\mathbf{r}_s$ by
\begin{align*}
\left(\begin{array}{cccccccccc}
x_{1} & \cdots & x_{k-1} & x_{k} & x_{k+1} &\cdots & x_{k+m} & x_{k+m+1}& x_{k+m+2}\\
x_{1} & \cdots & x_{k-1} & x_{k-1} & x_{k} &\cdots & x_{k+m-1} & x_{k+m} & ax_{k+m} \end{array}\right. & \\
\left.\begin{array}{cccccc}
x_{k+m+3} &\cdots & x_{r+2} & x_{r+3} & \cdots & x_n\\
x_{k+m+1} &\cdots & x_{r} & x_1 &\cdots & x_1\end{array}\right).
\end{align*}
Then $\mathbf{p}_{\lambda s}=\beta$ and $\mathbf{p}_{\mu s}=\epsilon$. Notice that $\alpha=\beta\gamma$ and
$$\left(\begin{array}{cc}
\mathbf{p}_{\lambda s} & \mathbf{p}_{\lambda t}\\
\mathbf{p}_{\mu s} & \mathbf{p}_{\mu t}\end{array}\right)=\left(\begin{array}{cc}
\beta & \alpha\\
\epsilon & \gamma \end{array}\right)$$ which implies $f_{\alpha}=f_{\gamma}f_{\beta}.$
\end{proof}

%In terms of the generators $f_\alpha$ with $\alpha=(k \ k+1 \ \cdots \  k+m-1 \ k+m)$ we have the following lemma

\begin{lem}\label{transposition}
Let $\alpha=(k \ k+1 \ \cdots  \ k+m)$, where $m\geq 1.$ Then $$f_\alpha=f_{\tau_k}f_{\tau_{k+1}}\cdots
f_{\tau_{k+m-1}}$$ in $ \overline{\overline{H}}$.
\end{lem}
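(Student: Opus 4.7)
The plan is to argue by induction on $m\geq 1$. The case $m=1$ is trivial since $\alpha=(k\ k+1)=\tau_k$. For the inductive step, assume that $f_{(k\ k+1\ \cdots\ k+m-1)}=f_{\tau_k}f_{\tau_{k+1}}\cdots f_{\tau_{k+m-2}}$. A direct check in $H$ (with the paper's left-to-right composition) reveals the identity
$$\alpha=(k\ k+1\ \cdots\ k+m)=\tau_{k+m-1}\cdot (k\ k+1\ \cdots\ k+m-1),$$
so it suffices to show $f_\alpha=f_{(k\ \cdots\ k+m-1)}\,f_{\tau_{k+m-1}}$ in $\overline{\overline{H}}$.

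To obtain this factorisation I would exhibit a singular square in $P$ of shape
$$\begin{pmatrix}\mathbf{p}_{\lambda s} & \mathbf{p}_{\lambda t}\\ \mathbf{p}_{\mu s} & \mathbf{p}_{\mu t}\end{pmatrix}=\begin{pmatrix}\tau_{k+m-1} & \alpha\\ \varepsilon & (k\ \cdots\ k+m-1)\end{pmatrix}.$$
Since $r\leq n-2$ we have $r+2\leq n$, giving enough room inside $[1,r+2]\subseteq [1,n]$. Take $\lambda=\neg(k,k+m)$ and $\mu=\neg(k,k+m+2)$, and define $\mathbf{r}_t$ by the rules $x_i\mapsto x_i$ for $i\in[1,k+m-1]$, $x_{k+m}\mapsto x_k$, $x_{k+m+1}\mapsto x_{k+m}$, $x_{k+m+2}\mapsto x_k$, $x_j\mapsto x_{j-2}$ for $j\in [k+m+3,r+2]$, and $x_j\mapsto x_1$ for $j>r+2$; define $\mathbf{r}_s$ by $x_i\mapsto x_i$ for $i\in[1,k]$, $x_j\mapsto x_{j-1}$ for $j\in[k+1,k+m]$, $x_{k+m+1}\mapsto x_{k+m}$, $x_{k+m+2}\mapsto x_{k+m-1}$, $x_j\mapsto x_{j-2}$ for $j\in [k+m+3,r+2]$, and $x_j\mapsto x_1$ for $j>r+2$. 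Computing the kernel blocks shows that the districts of $\mathbf{r}_t$ and $\mathbf{r}_s$ are $\neg(k+m,k+m+2)$ and $\neg(k+1,k+m+2)$ respectively, and one verifies $x_{l_j}\mathbf{r}=x_j$ in each case, so both elements lie in $\Theta$. A routine evaluation of $x_{u_i}\mathbf{r}_s,x_{u_i}\mathbf{r}_t,x_{v_i}\mathbf{r}_s,x_{v_i}\mathbf{r}_t$ then gives the four entries of the matrix above.

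The singularity condition $\tau_{k+m-1}^{-1}\alpha=(k\ \cdots\ k+m-1)=\varepsilon^{-1}(k\ \cdots\ k+m-1)$ is just the factorisation observed at the outset, so by relation $(P1)$ (equivalently $(R3)$ via Corollary~\ref{lem:ssindr}) we obtain $f_{\tau_{k+m-1}}^{-1}f_\varepsilon=f_\alpha^{-1}f_{(k\ \cdots\ k+m-1)}$. Since $f_\varepsilon=1$ by Lemma~\ref{lem:idgens}, this rearranges to $f_\alpha=f_{(k\ \cdots\ k+m-1)}\,f_{\tau_{k+m-1}}$; combining with the inductive hypothesis yields $f_\alpha=f_{\tau_k}f_{\tau_{k+1}}\cdots f_{\tau_{k+m-1}}$, as required.

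The main obstacle is the bookkeeping needed to confirm $\mathbf{r}_s,\mathbf{r}_t\in\Theta$: one must list the kernel classes, locate their minimum elements, and check that these minima form a strictly increasing sequence whose entries (the district) are mapped correctly onto $x_1,\ldots,x_r$. The pairs $\lambda,\mu$ are engineered to differ exactly at positions $k+m-1$ and $k+m$, so that the same $\mathbf{r}_t$ witnesses both $\alpha$ and $(k\ \cdots\ k+m-1)$ (via the alternative routes $\mathbf{q}_\lambda$ or $\mathbf{q}_\mu$ through the duplicated preimages of $x_k$ and $x_{k+m}$), and similarly the same $\mathbf{r}_s$ witnesses both $\tau_{k+m-1}$ and $\varepsilon$.
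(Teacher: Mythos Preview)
Your proof is correct and follows essentially the same approach as the paper: the same induction on $m$, the same factorisation $\alpha=\tau_{k+m-1}\,(k\ \cdots\ k+m-1)$, the same choices $\lambda=\neg(k,k+m)$ and $\mu=\neg(k,k+m+2)$, and in fact the very same maps $\mathbf{r}_t,\mathbf{r}_s$ (the paper calls them $\mathbf{r}_j,\mathbf{r}_l$). Your additional remarks on the districts and the $\Theta$-membership check are accurate and simply make explicit what the paper leaves to the reader.
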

\begin{proof}
We proceed by induction on $m$: clearly the result is true for $m=1$. Assume now that $m\geq 2$,
$\alpha=(k \ k+1 \ \cdots  \ k+m)$ and that $f_{(k \ k+1 \ \cdots  \ k+s)}=f_{\tau_k}f_{\tau_{k+1}}\cdots f_{\tau_{k+s-1}}$  for any $s<m$. It is easy to check that
$\alpha=\tau_{k+m-1} \gamma$, where
\[ \gamma=(k\ k+1\ \cdots \ k+m-1).\]

Now we define $\mathbf{r}_j$ by
\begin{align*}
\left(\begin{array}{ccccccccccccc}
x_{1} & \cdots & x_{k+m-1} & x_{k+m}& x_{k+m+1} & x_{k+m+2} & x_{k+m+3} &\cdots & x_{r+2} \\
x_{1} & \cdots & x_{k+m-1} & x_{k} & x_{k+m} & x_{k} & x_{k+m+1} & \cdots &  x_{r}
\end{array}\right. & \\
\left.\begin{array}{ccc}
x_{r+3} & \cdots & x_n\\
x_{1} & \cdots & x_1\end{array}\right). &
\end{align*}
Let $\lambda=\neg(k,k+m)$ and $\mu=\neg(k,k+m+2)$. Then $\mathbf{p}_{\lambda
j}=\alpha$ and $\mathbf{p}_{\mu j}=\gamma$.

Next we define $\mathbf{r}_l$ by
\begin{align*}
\left(\begin{array}{cccccccccccc}
x_{1} & \cdots & x_{k-1} & x_{k} & x_{k+1} &\cdots & x_{k+m}& x_{k+m+1} & x_{k+m+2} & x_{k+m+3} & \cdots  \\
x_{1} & \cdots & x_{k-1} & x_{k} & x_{k} & \cdots & x_{k+m-1} & x_{k+m} & x_{k+m-1} & x_{k+m+1} & \cdots
\end{array}\right. & \\
\left.\begin{array}{ccccc}
\cdots & x_{r+2} & x_{r+3} & \cdots & x_n\\
\cdots & x_{r} & x_{1} & \cdots &  x_{1}\end{array}\right). &
\end{align*}
 Then $\mathbf{p}_{\lambda l}=\tau_{k+m-1}$ and $\mathbf{p}_{\mu
l}=\epsilon$. Thus we have $$\left(\begin{array}{cc}
\mathbf{p}_{\lambda l} & \mathbf{p}_{\lambda j}\\
\mathbf{p}_{\mu l} & \mathbf{p}_{\mu j}\end{array}\right)=\left(\begin{array}{cc}
\tau_{k+m-1} & \alpha\\
\epsilon & \gamma \end{array}\right)$$ implying $f_{\alpha}=f_{\gamma}f_{\tau_{k+m-1}}$ and so  $f_\alpha=f_{\tau_k}\cdots f_{\tau_{k+m-1}}$, using our inductive hypothesis applied to $\gamma$.
\end{proof}

It follows from Lemmas~ \ref{further decomposition} and \ref{transposition} that $$ \overline{\overline{H}}=\langle f_{\tau_i}, f_{\iota_{a,j}}:~ 1\leq i\leq r-1, 1\leq j\leq r, a\in G\rangle.$$ Now it is time for us to find a series of relations satisfied by these generators. These correspond to those in Lemma \ref {presentation of wreath product}, with the exception of a twist in (W5).

\begin{lem}\label{(i i+1)}
For all $i\in [1,r-1],$ $f_{\tau_i}f_{\tau_i}=1,$ and so $f_{\tau_i}^{-1}=f_{\tau_i}.$
\end{lem}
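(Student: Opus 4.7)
To prove $f_{\tau_i}f_{\tau_i}=1$, my plan is to exhibit a single singular quadruple that, via (P1) and (P2), collapses directly to the desired relation. Take $(\phi,\varphi,\psi,\sigma) = (\tau_i,\varepsilon,\varepsilon,\tau_i)$: since $\tau_i^2 = \varepsilon$ in $H$, we have $\phi^{-1}\psi = \tau_i = \varphi^{-1}\sigma$, so the quadruple is algebraically singular. Once it is realised as entries of a $2\times 2$ submatrix of the sandwich matrix $P$, relation (P1) gives $f_{\tau_i}^{-1}f_{\varepsilon} = f_{\varepsilon}^{-1}f_{\tau_i}$, which by (P2) simplifies to $f_{\tau_i}^{-1}=f_{\tau_i}$, and hence $f_{\tau_i}^2 = 1$.

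The heart of the argument is therefore to construct $\mathbf{r}_s,\mathbf{r}_t\in\Theta$ and $\lambda,\mu\in\Lambda$ with $\mathbf{p}_{\lambda s}=\mathbf{p}_{\mu t}=\tau_i$ and $\mathbf{p}_{\lambda t}=\mathbf{p}_{\mu s}=\varepsilon$. Since $n\geq r+2$, positions $1,2,\ldots,r+2$ are available, and I would take
\[
\lambda = (1,\ldots,i-1,\,i+1,\,i+2,\,i+4,\ldots,r+2), \quad \mu = (1,\ldots,i-1,\,i+2,\,i+3,\,i+4,\ldots,r+2),
\]
which agree outside slots $i$ and $i+1$ and differ there by a uniform shift of one, with the shared pivot $\lambda_{i+1} = i+2 = \mu_i$.

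For $\mathbf{r}_s$ set $x_k\mathbf{r}_s = x_k$ for $k\leq i+1$, $x_{i+2}\mathbf{r}_s = x_i$, $x_{i+3}\mathbf{r}_s = x_{i+1}$, $x_k\mathbf{r}_s = x_{k-2}$ for $i+4\leq k\leq r+2$, and $x_k\mathbf{r}_s = x_1$ otherwise; for $\mathbf{r}_t$ set $x_k\mathbf{r}_t = x_k$ for $k\leq i$, $x_{i+1}\mathbf{r}_t = x_i$, $x_{i+2}\mathbf{r}_t = x_{i+1}$, $x_{i+3}\mathbf{r}_t = x_i$, $x_k\mathbf{r}_t = x_{k-2}$ for $i+4\leq k\leq r+2$, and $x_k\mathbf{r}_t = x_1$ otherwise. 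A routine check shows that the block-minima of $\mathbf{r}_s$ form the strictly increasing sequence $(1,\ldots,i+1,i+4,\ldots,r+2)$ and those of $\mathbf{r}_t$ the sequence $(1,\ldots,i,i+2,i+4,\ldots,r+2)$, so both lie in $\Theta$; a slot-by-slot evaluation of the four compositions $\mathbf{q}_\lambda\mathbf{r}_s$, $\mathbf{q}_\lambda\mathbf{r}_t$, $\mathbf{q}_\mu\mathbf{r}_s$, $\mathbf{q}_\mu\mathbf{r}_t$ confirms the required matrix.

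The main obstacle is keeping both $\mathbf{r}_s$ and $\mathbf{r}_t$ in $\Theta$ at the same time: the constraint $l_1 < l_2 < \cdots < l_r$ together with $x_{l_j}\mathbf{r} = x_j$ forbids placing the swap directly at positions $i, i+1$ (as one discovers already in the case $r=2$, $n=4$, $i=1$), so the construction must exploit the two extra positions $i+2, i+3$ (available precisely because $n\geq r+2$) to carry the swap away from the block minima for $j\leq i$ in $\mathbf{r}_t$ and for $j\leq i+1$ in $\mathbf{r}_s$; this is what makes the shift-by-one choice of $\lambda$ and $\mu$ essential.
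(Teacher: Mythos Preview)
Your proof is correct and is essentially identical to the paper's own argument: your $\lambda,\mu$ are precisely the paper's $\neg(i,i+3)$ and $\neg(i,i+1)$ (i.e.\ the paper's $\mu$ and $\lambda$ swapped), and your $\mathbf{r}_s,\mathbf{r}_t$ coincide with the paper's $\mathbf{r}_t,\mathbf{r}_s$ respectively, yielding the same $2\times 2$ block $\begin{pmatrix}\tau_i&\varepsilon\\ \varepsilon&\tau_i\end{pmatrix}$ of $P$. The district computations and the deduction from (P1)--(P2) are also the same.
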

\begin{proof}
Notice that $\tau_i\tau_i=\epsilon$. First we define $\mathbf{r}_s$ by $$\left(\begin{array}{cccccccccccccccc}
x_{1} & \cdots & x_{i-1} & x_i & x_{i+1} & x_{i+2}& x_{i+3} & x_{i+4} & \cdots & x_{r+2} & x_{r+3} & \cdots & x_n\\
x_{1} & \cdots & x_{i-1} & x_i &  x_{i} & x_{i+1} & x_{i} & x_{i+2} & \cdots &  x_{r} & x_{1} & \cdots &
x_1\end{array}\right).$$ Let $\lambda=\neg (i,i+1)$ and $\mu=\neg (i,i+3)$. Then $\mathbf{p}_{\lambda s}=\tau_i$ and
$\mathbf{p}_{\mu s}=\epsilon$.

Next, we define $\mathbf{r}_t$ by $$\left(\begin{array}{cccccccccccccccc}
x_{1} & \cdots & x_{i-1} & x_i & x_{i+1} & x_{i+2}& x_{i+3} & \cdots & x_{r+2} & x_{r+3} & \cdots & x_n\\
x_{1} & \cdots & x_{i-1} & x_i &  x_{i+1} & x_{i} & x_{i+1} & \cdots &  x_{r} & x_{1} & \cdots &
x_1\end{array}\right).$$ Then $\mathbf{p}_{\lambda t}=\epsilon$ and $\mathbf{p}_{\mu t}=\tau_i$, so
$$\left(\begin{array}{cc}
\mathbf{p}_{\lambda s} & \mathbf{p}_{\lambda t}\\
\mathbf{p}_{\mu s} & \mathbf{p}_{\mu t}\end{array}\right)=\left(\begin{array}{cc}
\tau_i & \epsilon\\
\epsilon & \tau_i \end{array}\right)$$ which implies $f_{\tau_i}f_{\tau_i}=1$.
\end{proof}

\begin{lem}
For any $j\pm 1\neq i\neq j$ we have $f_{\tau_i}f_{\tau_j}=f_{\tau_j}f_{\tau_i}$.
\end{lem}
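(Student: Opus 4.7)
The plan is to exhibit an explicit singular $E$-square whose corners deliver the commutation by a single application of (R3) from Corollary~\ref{lem:ssindr}. Assume without loss of generality $i<j$, so that $j\geq i+2$. I aim to construct $\mathbf{r}_s,\mathbf{r}_t\in\Theta$ and $\lambda,\mu\in\Lambda$ such that
\[
\left(\begin{array}{cc}\mathbf{p}_{\lambda s}&\mathbf{p}_{\lambda t}\\ \mathbf{p}_{\mu s}&\mathbf{p}_{\mu t}\end{array}\right)=\left(\begin{array}{cc}\tau_i&\tau_j\\ \tau_j&\tau_i\end{array}\right).
\]
Singularity is automatic because $\tau_i^{-1}\tau_j=\tau_i\tau_j=\tau_j\tau_i=\tau_j^{-1}\tau_i$, using $\tau_i^2=\tau_j^2=\e$ in $H$ together with the fact that $\tau_i,\tau_j$ act on disjoint pairs of coordinates. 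Applying (R3) produces $f_{\tau_i}^{-1}f_{\tau_j}=f_{\tau_j}^{-1}f_{\tau_i}$, and Lemma~\ref{(i i+1)} supplies $f_{\tau_i}^{-1}=f_{\tau_i}$ and $f_{\tau_j}^{-1}=f_{\tau_j}$, so this relation immediately rearranges to $f_{\tau_i}f_{\tau_j}=f_{\tau_j}f_{\tau_i}$.

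The construction of $\mathbf{r}_s,\mathbf{r}_t$ juxtaposes two ``parking slot'' blocks in the style of Lemma~\ref{(i i+1)}: near position $i$, one of the endomorphisms carries the three-copies-of-$x_i$ block (three copies of $x_i$ at positions $i,i+1,i+3$ with $x_{i+1}$ at $i+2$), while the other carries the two-and-two pattern ($x_i$ at $i,i+2$; $x_{i+1}$ at $i+1,i+3$); near position $j$ the roles of the two block types are interchanged between $\mathbf{r}_s$ and $\mathbf{r}_t$. Identity shifts fill the gap between the two blocks and the tail after the $j$-block, and any remaining positions are sent to $x_1$. Choosing $\lambda$ to skip the ``parking'' positions of the $\tau_i$-block in $\mathbf{r}_s$ and of the $\tau_j$-block in $\mathbf{r}_t$, and $\mu$ to skip the opposite pair, one reads off coordinate-by-coordinate that the four compositions $\mathbf{q}_\lambda\mathbf{r}_s,\mathbf{q}_\mu\mathbf{r}_s,\mathbf{q}_\lambda\mathbf{r}_t,\mathbf{q}_\mu\mathbf{r}_t$ equal $\tau_i,\tau_j,\tau_j,\tau_i$ respectively, while a check of the sorted sequence of block-minima confirms that $\mathbf{r}_s,\mathbf{r}_t\in\Theta$.

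The delicate point is that this straightforward layout uses positions up to $r+4$, whereas only $n\geq r+2$ is hypothesised. The tight cases $n\in\{r+2,r+3\}$ therefore require either a compression of the two parking blocks (possible because the slack $j-i\geq 2$ can be exploited to let them share positions) or an alternative route through the asymmetric squares $\left(\begin{smallmatrix}\tau_i&\e\\ \tau_i\tau_j&\tau_j\end{smallmatrix}\right)$ and its mirror image under $i\leftrightarrow j$, which together yield $f_{\tau_i\tau_j}=f_{\tau_i}f_{\tau_j}$ and $f_{\tau_j\tau_i}=f_{\tau_j}f_{\tau_i}$, combined using consistency of $\tau_i\tau_j\in P$ (Proposition~\ref{prop:eqofgens}) and the equality $\tau_i\tau_j=\tau_j\tau_i$ in $H$. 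In either realisation the remaining verification is a routine block-by-block calculation, the permutation-commutation being encoded entirely by the placement of the two parking blocks.
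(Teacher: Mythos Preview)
Your fallback ``alternative route'' is precisely what the paper does: it produces two asymmetric singular squares
\[
\begin{pmatrix}\tau_i & \tau_i\tau_j\\ \epsilon & \tau_j\end{pmatrix}
\quad\text{and}\quad
\begin{pmatrix}\tau_j & \tau_j\tau_i\\ \epsilon & \tau_i\end{pmatrix}
\]
(your squares with columns swapped), obtains $f_{\tau_i\tau_j}=f_{\tau_j}f_{\tau_i}$ and $f_{\tau_j\tau_i}=f_{\tau_i}f_{\tau_j}$, and then concludes via consistency. Crucially, the paper gives \emph{explicit} $\mathbf r_s,\mathbf r_t,\mathbf r_l\in\Theta$ and $\lambda,\mu,\eta\in\Lambda$ using only positions $1,\ldots,r+2$, so the argument goes through under the standing hypothesis $n\ge r+2$ with no case split.

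Your primary approach, the symmetric square $\left(\begin{smallmatrix}\tau_i&\tau_j\\ \tau_j&\tau_i\end{smallmatrix}\right)$, is an attractive shortcut but, as you recognise, your layout uses $r+4$ positions. The subsequent claim that the two parking blocks can be ``compressed'' to fit into $r+2$ positions by exploiting $j-i\ge2$ is asserted without any construction, and it is not obvious that such a compression exists: once you force $\mathbf q_\lambda\mathbf r_s=\tau_i$ and $\mathbf q_\mu\mathbf r_s=\tau_j$ for the \emph{same} $\mathbf r_s$, the constraints on the district of $\mathbf r_s$ interact in a way that two extra slots need not suffice. Since you then fall back on the asymmetric-square route anyway, the symmetric square contributes nothing to the proof for the tight cases.

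In short: discard the primary approach or else supply a genuine compressed construction; what actually carries the argument is your alternative route, which coincides with the paper's proof. To make the write-up complete you should give the explicit $\mathbf r_s,\mathbf r_t,\mathbf r_l$ and $\lambda,\mu,\eta$ (as the paper does) rather than describing them in prose, since the verification that these lie in $\Theta$ and that the products come out right is the entire content of the lemma.
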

\begin{proof}
Without loss of generality, suppose that $i>j$ and $i\neq j+1$. First, define $\mathbf{r}_t$ by
\begin{align*}
\left(\begin{array}{cccccccccccccc}
x_{1} & \cdots & x_{j+1}  & x_{j+2} & x_{j+3} & \cdots x_i & x_{i+1} & x_{i+2} & x_{i+3} & x_{i+4} & \cdots \\
x_{1} & \cdots & x_{j+1}  & x_{j} & x_{j+2} & \cdots x_{i-1} & x_i & x_{i+1} & x_i & x_{i+2} & \cdots \end{array}\right. & \\
\left.\begin{array}{ccccc}
\cdots & x_{r+2} & x_{r+3} & \cdots & x_n\\
\cdots & x_{r} & x_{1} & \cdots & x_1\end{array}\right). &
\end{align*}
Note that if $i=j+2$ then the section from $j+3$ to $i$ is empty. Let
$\lambda= \neg (j,i+1)$ and $\mu=\neg(j,i+3)$, so that $\mathbf{p}_{\lambda t}=\tau_i\tau_j$ and $\mathbf{p}_{\mu
t}=\tau_j.$ Next define $\mathbf{r}_s$ by
\begin{align*}
\left(\begin{array}{cccccccccccccc}
x_{1} & \cdots & x_{j}  & x_{j+1} & x_{j+2} & \cdots x_i & x_{i+1} & x_{i+2} & x_{i+3} & x_{i+4} & \cdots \\
x_{1} & \cdots & x_{j}  &  x_{j} & x_{j+1} & \cdots x_{i-1} & x_i & x_{i+1} & x_i & x_{i+2} & \cdots\end{array}\right. & \\
\left.\begin{array}{ccccc}
\cdots & x_{r+2} & x_{r+3} & \cdots & x_n\\
\cdots & x_{r} & x_{1} & \cdots & x_1\end{array}\right). &
\end{align*}
Then $\mathbf{p}_{\lambda s}=\tau_i$ and $\mathbf{p}_{\mu s}=\epsilon$.
Thus we have
$$\left(\begin{array}{cc}
\mathbf{p}_{\lambda s} & \mathbf{p}_{\lambda t}\\
\mathbf{p}_{\mu s} & \mathbf{p}_{\mu t}\end{array}\right)=\left(\begin{array}{cc}
\tau_i & \tau_i\tau_j\\
\epsilon & \tau_j \end{array}\right)$$ implying $f_{\tau_i\tau_j}=f_{\tau_j}f_{\tau_i}.$

To complete the proof, we define $\mathbf{r}_l$ by
\begin{align*}
\left(\begin{array}{cccccccccccccc}
x_{1} & \cdots & x_{j+1} & x_{j+2} & x_{j+3} & \cdots x_i & x_{i+1} & x_{i+2} & x_{i+3} & x_{i+4} & \cdots \\
x_{1} & \cdots & x_{j+1} & x_{j} & x_{j+2} & \cdots x_{i-1} & x_i & x_{i} & x_{i+1} & x_{i+2} & \cdots \end{array}\right. & \\
\left.\begin{array}{ccccc}
\cdots & x_{r+2} & x_{r+3} & \cdots & x_n\\
\cdots & x_{r} & x_{1} & \cdots & x_1\end{array}\right).
\end{align*}
Then $\mathbf{p}_{\lambda l}=\tau_j.$ Put $\eta=\neg (j+2,i+1)$. Then
$\mathbf{p}_{\eta l}=\epsilon$ and $\mathbf{p}_{\eta t}=\tau_i,$ so $$\left(\begin{array}{cc}
\mathbf{p}_{\lambda l} & \mathbf{p}_{\lambda t}\\
\mathbf{p}_{\eta l} & \mathbf{p}_{\eta t}\end{array}\right)=\left(\begin{array}{cc}
\tau_j & \tau_j\tau_i\\
\epsilon & \tau_i \end{array}\right)$$  which implies $f_{\tau_j\tau_i}=f_{\tau_i}f_{\tau_j}$, and hence $f_{\tau_j}f_{\tau_i}=f_{\tau_i}f_{\tau_j}.$
\end{proof}

\begin{lem}
For any $i\in [1,r-2]$ we have $f_{\tau_i}f_{\tau_{i+1}}f_{\tau_i}=f_{\tau_{i+1}}f_{\tau_i}f_{\tau_{i+1}}.$
\end{lem}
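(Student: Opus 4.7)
The plan is to establish the braid relation by realising both sides as a single generator $f_{(i\ i+2)}\in\overline{\overline{H}}$, where $(i\ i+2)\in H$ is the transposition that satisfies $\tau_i\tau_{i+1}\tau_i=(i\ i+2)=\tau_{i+1}\tau_i\tau_{i+1}$ in $H$. This first requires checking that $(i\ i+2)$ actually occurs as an entry of the sandwich matrix $P$; although Lemma~\ref{lem:all} shows that not every element of $H$ need lie in $P$ when $r$ is close to $n$, for $1\leq i\leq r-2$ and $r\leq n-2$ one can exhibit a realisation using, for example, $(l_1,\ldots,l_r)=(1,2,\ldots,i+2,i+5,\ldots,r+2)$ together with $(u_1,\ldots,u_r)=(1,\ldots,i-1,i+2,i+3,\ldots,r+2)$ and verifying the constraints preceding Lemma~\ref{lem:all}. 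Once this is done, Proposition~\ref{prop:eqofgens} guarantees that $f_{(i\ i+2)}$ is a well-defined element of $\overline{\overline{H}}$.

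The next step is to construct a single versatile idempotent $\mathbf{r}_t\in\Theta$, lying in district $(1,2,\ldots,i+2,i+5,\ldots,r+2)$, whose action fixes $x_k$ for $k\leq i+2$, sends $x_{i+3}\mapsto x_{i+1}$ and $x_{i+4}\mapsto x_i$, and satisfies $x_k\mapsto x_{k-2}$ for $k\geq i+5$ (with any positions beyond $r+2$ sent to $x_1$). Three direct calculations show simultaneously that
\[\mathbf{p}_{\neg(i,i+1),t}=(i\ i+2),\qquad \mathbf{p}_{\neg(i,i+3),t}=(i\ i+1\ i+2),\qquad \mathbf{p}_{\neg(i+1,i+4),t}=\tau_{i+1},\]
so the same $\mathbf{r}_t$ provides the right-hand columns of both singular squares that are needed. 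I would then build two companion idempotents $\mathbf{r}_{s_1},\mathbf{r}_{s_2}\in\Theta$, each agreeing with $\mathbf{r}_t$ on most positions but differing in how $x_{i+1}$ and $x_{i+3}$ are treated, so that together with $\mathbf{r}_t$ they realise the two singular squares
\[\left(\begin{array}{cc}\tau_i&(i\ i+2)\\ \epsilon&(i\ i+1\ i+2)\end{array}\right)\text{ at }\lambda=\neg(i,i+1),\,\mu=\neg(i,i+3),\]
\[\left(\begin{array}{cc}(i\ i+1\ i+2)&(i\ i+2)\\ \epsilon&\tau_{i+1}\end{array}\right)\text{ at }\lambda=\neg(i,i+1),\,\mu=\neg(i+1,i+4).\]

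Applying (P1) to these two squares and substituting $f_{(i\ i+1\ i+2)}=f_{\tau_i}f_{\tau_{i+1}}$ from Lemma~\ref{transposition}, the first square yields $f_{(i\ i+2)}=f_{(i\ i+1\ i+2)}f_{\tau_i}=f_{\tau_i}f_{\tau_{i+1}}f_{\tau_i}$, while the second gives $f_{(i\ i+2)}=f_{\tau_{i+1}}f_{(i\ i+1\ i+2)}=f_{\tau_{i+1}}f_{\tau_i}f_{\tau_{i+1}}$; comparing the two expressions for $f_{(i\ i+2)}$ produces the desired braid relation. The main technical obstacle is verifying that each of $\mathbf{r}_t,\mathbf{r}_{s_1},\mathbf{r}_{s_2}$ actually lies in $\Theta$ with the intended district, since the action on the interior positions $i,i+1,\ldots,i+4$ must be chosen delicately so that the induced kernel partition places the smallest element of each block at precisely the position the definition of $\Theta$ demands and, simultaneously, produces the prescribed entry of $P$ against two distinct $\mathcal{L}$-classes. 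This tightness is why a single pair of $\neg(\cdot,\cdot)$-indices, which sufficed for the commutation argument in the preceding lemma, is no longer enough here: three such index sets must be introduced and two companion left-column idempotents constructed independently.
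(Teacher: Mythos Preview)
Your approach is correct and takes a genuinely different route from the paper's. The paper works with the $3$-cycle $\rho=\tau_{i+1}\tau_i=(i\ i+1\ i+2)$ and its square $\rho^2=(i\ i+2\ i+1)$, establishing three separate relations via three singular squares: $f_{\rho^2}=f_\rho f_\rho$, $f_\rho=f_{\tau_i}f_{\tau_{i+1}}$, and $f_{\rho^2}=f_{\tau_{i+1}}f_{\tau_i}$; the braid relation then falls out by a short chain of substitutions together with $f_{\tau_i}^2=1$. You instead pivot on the transposition $(i\ i+2)$, build two singular squares sharing the right-hand column $\mathbf{r}_t$, and invoke Lemma~\ref{transposition} for $f_{(i\ i+1\ i+2)}=f_{\tau_i}f_{\tau_{i+1}}$ rather than re-deriving it. Your route is marginally more economical (two new squares rather than three, and no appeal to $f_{\tau_i}^2=1$ beyond what is implicit in Lemma~\ref{transposition}); the paper's route has the virtue of never needing to check that the non-simple element $(i\ i+2)$ occurs in $P$, since both $\rho$ and $\rho^2$ manifestly do. One small inaccuracy: your companions $\mathbf{r}_{s_1},\mathbf{r}_{s_2}$ will in fact differ from $\mathbf{r}_t$ on all four of the interior positions $x_{i+1},x_{i+2},x_{i+3},x_{i+4}$, not just on $x_{i+1}$ and $x_{i+3}$, but the constructions still go through as you describe.
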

\begin{proof}
Let $\rho=\tau_{i+1}\tau_i=(i\ i+1\ i+2)$ so that $\rho^2=(i\ i+2\ i+1).$

First, we show that $f_{\rho^2}=f_{\rho}f_{\rho}.$ For this purpose, we define $\mathbf{r}_j$ by
$$\left(\begin{array}{cccccccccccccccc}
x_{1} & \cdots & x_i & x_{i+1} & x_{i+2}& x_{i+3} & x_{i+4} & x_{i+5} & \cdots & x_{r+2} & x_{r+3} & \cdots & x_n\\
x_{1} & \cdots & x_i &  x_{i+1} & x_{i+2} & x_{i} & x_{i+1} & x_{i+3} & \cdots &  x_{r} & x_{1} & \cdots &
x_1\end{array}\right).$$ Let $\lambda=\neg(i,i+1)$ and $\mu=\neg(i,i+4)$, so that  $\mathbf{p}_{\lambda j}=(i\ i+2\
i+1)=\rho^2$ and $\mathbf{p}_{\mu j}=(i\ i+1\ i+2)=\rho.$ Next we define $\mathbf{r}_l$ by
$$\left(\begin{array}{cccccccccccccccc}
x_{1} & \cdots & x_i & x_{i+1} & x_{i+2}& x_{i+3} & x_{i+4} & x_{i+5} & \cdots & x_{r+2} & x_{r+3} & \cdots & x_n\\
x_{1} & \cdots & x_i &  x_{i} & x_{i+1} & x_{i+2} & x_{i} & x_{i+3} & \cdots &  x_{r} & x_{1} & \cdots &
x_1\end{array}\right).$$ Then $\mathbf{p}_{\lambda l}=(i\ i+1\ i+2)=\rho$ and $\mathbf{p}_{\mu l}=\epsilon$, so here we
have $$\left(\begin{array}{cc}
\mathbf{p}_{\lambda l} & \mathbf{p}_{\lambda j}\\
\mathbf{p}_{\mu l} & \mathbf{p}_{\mu j}\end{array}\right)=\left(\begin{array}{cc}
\rho & \rho^2\\
\epsilon & \rho \end{array}\right)$$ Hence we have $f_{\rho^2}=f_{\rho}f_{\rho}.$

Secondly, we show that $f_\rho=f_{\tau_i}f_{\tau_{i+1}}.$ Note that $\tau_{i+1}\rho=\tau_i.$ Now we define
$\mathbf{r}_s$ by
\begin{align*}
\left(\begin{array}{cccccccccccc}
x_{1} & \cdots & x_{i-1} & x_i & x_{i+1} & x_{i+2}& x_{i+3} & x_{i+4} & x_{i+5} & \cdots\\
x_{1} & \cdots & x_{i-1} & x_i &  x_{i+1} & x_{i+2} & x_{i} & x_{i+2} & x_{i+3} & \cdots
\end{array}\right. & \\
\left.\begin{array}{ccccc}
\cdots & x_{r+2} & x_{r+3} & \cdots & x_n\\
\cdots &  x_{r} & x_{1} & \cdots & x_1\end{array}\right). &
\end{align*}
Let $\nu=\neg(i,i+2)$ and $\xi=(i,i+4)$.  Then $\mathbf{p}_{\nu s}=\tau_i$ and $\mathbf{p}_{\xi s}=\rho=(i\ i+1\ i+2).$
Next, define $\mathbf{r}_t$ by
\begin{align*}
\left(\begin{array}{cccccccccccc}
x_{1} & \cdots & x_{i-1} & x_i & x_{i+1} & x_{i+2}& x_{i+3} & x_{i+4} & x_{i+5} & \cdots \\
x_{1} & \cdots & x_{i-1} & x_i &  x_{i} & x_{i+1} & x_{i+2} & x_{i+1} & x_{i+3} & \cdots \end{array}\right. & \\
\left.\begin{array}{ccccc}
\cdots & x_{r+2} & x_{r+3} & \cdots & x_n\\
\cdots &  x_{r} & x_{1} & \cdots & x_1\end{array}\right). &
\end{align*}
Then $\mathbf{p}_{\nu t}=\tau_{i+1}$ and $\mathbf{p}_{\xi t}=\epsilon$, and so we
have
$$\left(\begin{array}{cc}
\mathbf{p}_{\nu t} & \mathbf{p}_{\nu s}\\
\mathbf{p}_{\xi t} & \mathbf{p}_{\xi s}\end{array}\right)=\left(\begin{array}{cc}
\tau_{i+1} & \tau_i\\
\epsilon & \rho \end{array}\right)$$ implying $f_{\tau_i}=f_{\rho}f_{\tau_{i+1}}$, so $f_\rho=f_{\tau_i}f_{\tau_{i+1}}$ by Lemma \ref{(i i+1)}.

Finally, we show that $f_{\rho^2}=f_{\tau_{i+1}}f_{\tau_i}.$ Note that $\rho^2=(i\ i+2\ i+1)=\tau_i\tau_{i+1}$. Define
$\mathbf{r}_u$ by
\begin{align*}
\left(\begin{array}{cccccccccccc}
x_{1} & \cdots & x_{i-1} & x_i & x_{i+1} & x_{i+2}& x_{i+3} & x_{i+4} & x_{i+5} & \cdots \\
x_{1} & \cdots & x_{i-1} & x_i &  x_{i+1} & x_{i+2} & x_{i} & x_{i+1} & x_{i+3} & \cdots \end{array}\right. & \\
\left.\begin{array}{ccccc}
\cdots & x_{r+2} & x_{r+3} & \cdots & x_n\\
\cdots &  x_{r} & x_{1} & \cdots & x_1\end{array}\right). &
\end{align*}
Let $\tau=\neg(i,i+1)$ and $\delta=\neg(i+1,i+3)$. Then $\mathbf{p}_{\tau u}=\rho^2$
and $\mathbf{p}_{\delta u}=\tau_{i+1}$. Define $\mathbf{r}_v$ by $$\left(\begin{array}{cccccccccccccccc}
x_{1} & \cdots & x_{i-1} & x_i & x_{i+1} & x_{i+2}& x_{i+3} & x_{i+4} &  \cdots & x_{r+2} & x_{r+3} & \cdots & x_n\\
x_{1} & \cdots & x_{i-1} & x_i &  x_{i+1} & x_{i+1} & x_{i} & x_{i+2} & \cdots &  x_{r} & x_{1} & \cdots &
x_1\end{array}\right).$$ Then $\mathbf{p}_{\tau v}=\tau_i$ and $\mathbf{p}_{\delta v}=\epsilon$, so we have
$$\left(\begin{array}{cc}
\mathbf{p}_{\tau v} & \mathbf{p}_{\tau u}\\
\mathbf{p}_{\delta v} & \mathbf{p}_{\delta u}\end{array}\right)=\left(\begin{array}{cc}
\tau_i & \rho^2\\
\epsilon & \tau_{i+1}\end{array}\right)$$ Hence $f_{\rho^2}=f_{\tau_{i+1}}f_{\tau_i}$. We now calculate:
$$
f_{\tau_i}f_{\tau_{i+1}}f_{\tau_i}=f_{\tau_i}f_{\rho^2}=
f_{\tau_i}f_{\rho}f_{\rho}=f_{\tau_i}f_{\tau_i}f_{\tau_{i+1}}f_{\tau_i}f_{\tau_{i+1}}=f_{\tau_{i+1}}f_{\tau_i}f_{\tau_{i+1}},
$$
the final step using Lemma~\ref{(i i+1)}.
\end{proof}

We warn the reader that the relation we find below is a twist on that in (W5).

\begin{lem}
For all $i\in [1,r],$ $a,b\in G$, $f_{\iota_{b,i}}f_{\iota_{a,i}}=f_{\iota_{ab,i}},$ and so $f_{\iota_{a,i}}^{-1}=f_{\iota_{a^{-1},i}}$.
\end{lem}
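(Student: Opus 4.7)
The statement is the analogue of relation (W5) from Lemma~\ref{presentation of wreath product} but with the order on the left-hand side reversed; this ``twist'' is precisely the anti-homomorphism behaviour already observed in Lemma~\ref{lem:morphism}. The plan is to prove it by exhibiting one singular quadruple in the sandwich matrix $P$ and appealing to relation (P1) of the presentation $\mathcal{Q}$ from Lemma~\ref{presentation}. Concretely, I aim for the array
\[
\begin{pmatrix} \mathbf{p}_{\lambda s} & \mathbf{p}_{\lambda t}\\ \mathbf{p}_{\mu s} & \mathbf{p}_{\mu t}\end{pmatrix}=\begin{pmatrix}\iota_{a,i} & \iota_{ab,i}\\ \epsilon & \iota_{b,i}\end{pmatrix}.
\]
Singularity is automatic, since $\iota_{a,i}^{-1}\iota_{ab,i}=\iota_{a^{-1},i}\iota_{ab,i}=\iota_{b,i}=\epsilon^{-1}\iota_{b,i}$, and (P1) then yields $f_{\iota_{a,i}}^{-1}=f_{\iota_{ab,i}}^{-1}f_{\iota_{b,i}}$, which rearranges to $f_{\iota_{b,i}}f_{\iota_{a,i}}=f_{\iota_{ab,i}}$.

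For the construction, since $r\leq n-2$ we have at least $r+2$ positions available, and $\iota_{a,i}$ only acts on the $i$-th coordinate, so two extra columns straddling $x_i$ suffice. Take $\lambda=\neg(i,i+2)$ and $\mu=\neg(i,i+1)$. Define $\mathbf{r}_s$ to act as the identity on $x_1,\dots,x_i$, to send $x_{i+1}\mapsto ax_i$ and $x_{i+2}\mapsto x_i$, to shift $x_{i+3},\dots,x_{r+2}$ down to $x_{i+1},\dots,x_r$, and to collapse $x_{r+3},\dots,x_n$ to $x_1$; define $\mathbf{r}_t$ by the same recipe except with $x_{i+1}\mapsto abx_i$ and $x_{i+2}\mapsto bx_i$. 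Both lie in $\Theta$: in each kernel class of either transformation the minimal index is hit with coefficient $1_G$, namely $l_j=j$ for $j\leq i$ and $l_j=j+2$ for $j\geq i+1$. A one-line computation using $x_k\mathbf{q}_\nu=x_{\nu_k}$ then reads off the four cells: rows $\lambda$ and $\mu$ differ only in whether the $i$-th coordinate of the product is pulled from position $i+1$ (yielding $a$ or $ab$) or position $i+2$ (yielding $1_G$ or $b$), which is exactly what produces the desired matrix above.

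The boundary cases $i=1$ and $i=r$ present no genuine obstacle — they merely reduce some of the ``shift'' ranges to emptiness — so the same quadruple works uniformly for all $i\in[1,r]$. The main thing to notice, therefore, is the mild conceptual point that the natural quadruple in $P$ produces the reversed order $f_{\iota_{b,i}}f_{\iota_{a,i}}$, consistent with the anti-homomorphism pattern of Lemma~\ref{lem:morphism}, rather than the order appearing in (W5). The ``Moreover'' clause is then immediate: setting $b=a^{-1}$ gives $f_{\iota_{a^{-1},i}}f_{\iota_{a,i}}=f_{\iota_{1_G,i}}=f_{\epsilon}=1_{\overline{H}}$ by Lemma~\ref{lem:idgens}, whence $f_{\iota_{a,i}}^{-1}=f_{\iota_{a^{-1},i}}$.
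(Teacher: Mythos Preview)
Your proof is correct and follows essentially the same approach as the paper's: both exhibit a $2\times 2$ singular block in $P$ with entries $\epsilon,\iota_{a,i},\iota_{b,i},\iota_{ab,i}$ by introducing two extra columns at positions $i+1,i+2$ and defining two elements of $\Theta$ that differ only in the group coefficients at those positions. The only differences are cosmetic relabelings (you place $ax_i$ at $i+1$ and $x_i$ at $i+2$, whereas the paper places $bx_i$ at $i+1$ and $abx_i$ at $i+2$, and the roles of the two rows are swapped accordingly); the deduction of $f_{\iota_{ab,i}}=f_{\iota_{b,i}}f_{\iota_{a,i}}$ from (P1) and (P2), and of the inverse statement by taking $b=a^{-1}$, is identical.
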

\begin{proof}
Define $\mathbf{r}_j$ by
$$\left(\begin{array}{cccccccccccccccc}
x_{1} & \cdots & x_{i-1} & x_i & x_{i+1} & x_{i+2}& x_{i+3} & \cdots & x_{r+2} & x_{r+3} & \cdots & x_n\\
x_{1} & \cdots & x_{i-1} & x_i &  bx_{i} & abx_{i} & x_{i+1} &  \cdots &  x_{r} & x_{1} & \cdots &
x_1\end{array}\right).$$ Let $\lambda=\neg(i,i+2)$ and $\mu=\neg(i,i+1)$, then $\mathbf{p}_{\lambda j}=\iota_{b,i}$ and
$\mathbf{p}_{\mu j}=\iota_{ab,i}.$ Next, we define $\mathbf{r}_t$ by $$\left(\begin{array}{cccccccccccccccc}
x_{1} & \cdots & x_{i-1} & x_i & x_{i+1} & x_{i+2}& x_{i+3} & \cdots & x_{r+2} & x_{r+3} & \cdots & x_n\\
x_{1} & \cdots & x_{i-1} & x_i &  x_{i} & ax_{i} & x_{i+1} &  \cdots &  x_{r} & x_{1} & \cdots &
x_1\end{array}\right).$$ Then $\mathbf{p}_{\lambda t}=\epsilon$ and $\mathbf{p}_{\mu t}=\iota_{a,i}$, so we have
$$\left(\begin{array}{cc}
\mathbf{p}_{\mu t} & \mathbf{p}_{\mu j}\\
\mathbf{p}_{\lambda t} & \mathbf{p}_{\lambda j}\end{array}\right)=\left(\begin{array}{cc}
\iota_{a,i} & \iota_{ab,i}\\
\epsilon & \iota_{b,i} \end{array}\right)$$ implying $f_{\iota_{ab,i}}=f_{\iota_{b,i}}f_{\iota_{a,i}}.$
\end{proof}

\begin{lem} For all $i\neq j$ and $a,b\in G$ we have
$f_{\iota_{a,i}}f_{\iota_{b,j}}=f_{\iota_{b,j}}f_{\iota_{a,i}}$.
\end{lem}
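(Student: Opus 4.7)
My plan is to exhibit two singular $2\times 2$ submatrices of the sandwich matrix $P$ and invoke relation (P1) of Lemma~\ref{presentation} on each, yielding $f_{\iota_{a,i}\iota_{b,j}}=f_{\iota_{a,i}}f_{\iota_{b,j}}$ and $f_{\iota_{a,i}\iota_{b,j}}=f_{\iota_{b,j}}f_{\iota_{a,i}}$. Equating these gives the required commutativity, using that $\iota_{a,i}$ and $\iota_{b,j}$ already commute in $H$ so both left-hand sides denote the same $f$-value by Proposition~\ref{prop:eqofgens}. Without loss of generality I assume $i<j$ (the case $i>j$ is symmetric upon swapping $(a,i)\leftrightarrow(b,j)$).

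The construction is driven by a single row $\mathbf{r}_t\in\Theta$ that simultaneously witnesses $\iota_{a,i}\iota_{b,j}$, $\iota_{a,i}$ and $\iota_{b,j}$ at three different columns. Define $\mathbf{r}_t$ to fix $x_1,\ldots,x_i$, send $x_{i+1}\mapsto ax_i$, shift $x_{i+2},\ldots,x_{j+1}$ down by one (so $x_k\mapsto x_{k-1}$), send $x_{j+2}\mapsto bx_j$, shift $x_{j+3},\ldots,x_{r+2}$ down by two, and send the tail $x_{r+3},\ldots,x_n$ to $x_1$; the inequality $r+2\leq n$ (from $r\leq n-2$) guarantees there is room. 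A routine check shows $\mathbf{r}_t\in\Theta$ and lies in district $(1,\ldots,i,i+2,\ldots,j+1,j+3,\ldots,r+2)$. Taking $\lambda=\neg(i,j+1)$, $\mu=\neg(i+1,j+1)$, $\nu=\neg(i,j+2)$, a direct calculation gives
\[
\mathbf{p}_{\lambda t}=\iota_{a,i}\iota_{b,j},\qquad \mathbf{p}_{\mu t}=\iota_{b,j},\qquad \mathbf{p}_{\nu t}=\iota_{a,i}.
\]

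To complete the two squares I introduce $\mathbf{r}_s$, obtained from $\mathbf{r}_t$ by replacing $x_{j+2}\mapsto bx_j$ with $x_{j+2}\mapsto x_j$ (omitting the $b$), and $\mathbf{r}_{s'}$, obtained from $\mathbf{r}_t$ by replacing $x_{i+1}\mapsto ax_i$ with $x_{i+1}\mapsto x_i$ (omitting the $a$). Both still lie in $\Theta$, in the same district as $\mathbf{r}_t$, and one verifies $\mathbf{p}_{\lambda s}=\iota_{a,i}$, $\mathbf{p}_{\mu s}=\varepsilon$, $\mathbf{p}_{\lambda s'}=\iota_{b,j}$ and $\mathbf{p}_{\nu s'}=\varepsilon$. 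Hence the submatrices
\[
\left(\begin{array}{cc} \mathbf{p}_{\lambda s} & \mathbf{p}_{\lambda t}\\ \mathbf{p}_{\mu s} & \mathbf{p}_{\mu t} \end{array}\right)=\left(\begin{array}{cc} \iota_{a,i} & \iota_{a,i}\iota_{b,j}\\ \varepsilon & \iota_{b,j} \end{array}\right),\qquad \left(\begin{array}{cc} \mathbf{p}_{\lambda s'} & \mathbf{p}_{\lambda t}\\ \mathbf{p}_{\nu s'} & \mathbf{p}_{\nu t} \end{array}\right)=\left(\begin{array}{cc} \iota_{b,j} & \iota_{a,i}\iota_{b,j}\\ \varepsilon & \iota_{a,i} \end{array}\right)
\]
are singular quadruples (in both cases the two relevant products $\phi^{-1}\psi$ and $\varphi^{-1}\sigma$ agree because $\iota_{a,i},\iota_{b,j}$ commute in $H$), and applying (P1) of Lemma~\ref{presentation} to each yields the two factorisations. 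The only real obstacle is combinatorial: assembling $\mathbf{r}_t$ so that a single row of $P$ exposes all three products $\iota_{a,i}\iota_{b,j},\iota_{a,i},\iota_{b,j}$ at suitably deleted column indices; once this row is identified, the two companion rows are minor modifications and the verification is routine, paralleling the constructions in the preceding lemmas of this section.
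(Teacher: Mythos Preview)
Your proof is correct and follows essentially the same approach as the paper. The paper makes the symmetric choice $i>j$, but its main row $\mathbf{r}_t$ and companion rows $\mathbf{r}_s,\mathbf{r}_l$ are, after swapping $(a,i)\leftrightarrow(b,j)$, precisely your $\mathbf{r}_t,\mathbf{r}_{s'},\mathbf{r}_s$, and the three column indices $\neg(j,i+1),\neg(j,i+2),\neg(j+1,i+1)$ correspond to your $\lambda,\nu,\mu$; the two singular squares and the resulting factorisations are identical. (Your appeal to Proposition~\ref{prop:eqofgens} is harmless but unnecessary here, since the entry $\mathbf{p}_{\lambda t}$ is literally the same in both squares.)
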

\begin{proof} Without loss of generality, suppose that
 $i>j$. Recall that ${\iota_{a,i}}{\iota_{b,j}}={\iota_{b,j}}{\iota_{a,i}}$.  First define $\mathbf{r}_t$ by
\begin{align*}
\left(\begin{array}{cccccccccccc}
x_{1} & \cdots & x_{j-1} & x_j & x_{j+1} & x_{j+2} & \cdots & x_{i+1} & x_{i+2} & x_{i+3} & \cdots \\
x_{1} & \cdots & x_{j-1} & x_j &  bx_{j} & x_{j+1} & \cdots & x_i & ax_{i} & x_{i+1} & \cdots \end{array}\right. & \\
\left.\begin{array}{ccccc}
\cdots & x_{r+2} & x_{r+3} & \cdots & x_n\\
\cdots &  x_{r} & x_{1} & \cdots & x_1\end{array}\right).
\end{align*}
Let $\lambda=\neg(j,i+1)$ and $\mu=\neg(j,i+2)$.  Then $\mathbf{p}_{\lambda
t}=\iota_{a,i}\iota_{b,j}$ and $\mathbf{p}_{\mu t}=\iota_{b,j}.$

Next, we define $\mathbf{r}_s$ by
\begin{align*}
\left(\begin{array}{cccccccccccc}
x_{1} & \cdots & x_{j-1} & x_j & x_{j+1} & x_{j+2} & \cdots & x_{i+1} & x_{i+2} & x_{i+3} & \cdots \\
x_{1} & \cdots & x_{j-1} & x_j &  x_{j} & x_{j+1} & \cdots & x_i & ax_{i} & x_{i+1} & \cdots \end{array}\right. & \\
\left.\begin{array}{ccccc}
\cdots & x_{r+2} & x_{r+3} & \cdots & x_n\\
\cdots &  x_{r} & x_{1} & \cdots & x_1\end{array}\right).
\end{align*}
Then $\mathbf{p}_{\lambda s}=\iota_{a,i}$ and $\mathbf{p}_{\mu s}=\epsilon$. Thus
we have
$$\left(\begin{array}{cc}
\mathbf{p}_{\lambda s} & \mathbf{p}_{\lambda t}\\
\mathbf{p}_{\mu s} & \mathbf{p}_{\mu t}\end{array}\right)=\left(\begin{array}{cc}
\iota_{a,i} & \iota_{a,i}\iota_{b,j}\\
\epsilon & \iota_{b,j} \end{array}\right)$$ implying $f_{\iota_{b,j}}f_{\iota_{a,i}}=f_{\iota_{a,i}\iota_{b,j}}.$

Define $\mathbf{r}_l$ by
\begin{align*}
\left(\begin{array}{cccccccccccc}
x_{1} & \cdots & x_{j-1} & x_j & x_{j+1} & x_{j+2} & \cdots & x_{i+1} & x_{i+2} & x_{i+3} & \cdots \\
x_{1} & \cdots & x_{j-1} & x_j &  bx_{j} & x_{j+1} & \cdots & x_i & x_{i} & x_{i+1} & \cdots \end{array}\right. & \\
\left.\begin{array}{ccccc}
\cdots & x_{r+2} & x_{r+3} & \cdots & x_n\\
\cdots &  x_{r} & x_{1} & \cdots & x_1\end{array}\right).
\end{align*}
Then $\mathbf{p}_{\lambda l}=\iota_{b,j}.$ On the other hand, by putting
$\eta=\neg(j+1,i+1)$ we have $\mathbf{p}_{\eta l}=\epsilon$ and $\mathbf{p}_{\eta t}=\iota_{a,i}$, and so
$$\left(\begin{array}{cc}
\mathbf{p}_{\lambda l} & \mathbf{p}_{\lambda t}\\
\mathbf{p}_{\eta l} & \mathbf{p}_{\eta t}\end{array}\right)=\left(\begin{array}{cc}
\iota_{b,j} & \iota_{b,j}\iota_{a,i}\\
\epsilon & \iota_{a,i} \end{array}\right)$$ which implies $f_{\iota_{b,j}\iota_{a,i}}=f_{\iota_{a,i}}f_{\iota_{b,j}}$, and hence $f_{\iota_{a,i}}f_{\iota_{b,j}}=f_{\iota_{b,j}}f_{\iota_{a,i}}.$
\end{proof}

\begin{lem} For any $i,j$ with $i\neq j, j+1$ and $a\in G$ we have
$f_{\iota_{a,i}}f_{\tau_j}=f_{\tau_j}f_{\iota_{a,i}}.$
\end{lem}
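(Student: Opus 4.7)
The proof will follow the pattern of the preceding three lemmas, exhibiting two singular squares inside $P$ via carefully chosen $\mathbf{r}$-elements. The basic observation is that since $i \neq j, j+1$, the supports of $\iota_{a,i}$ and $\tau_j$ inside $[1,r]$ are disjoint, so $\iota_{a,i}\tau_j = \tau_j \iota_{a,i}$ as elements of $H$. Hence once we establish that $f_{\iota_{a,i}\tau_j} = f_{\tau_j} f_{\iota_{a,i}}$ and $f_{\tau_j \iota_{a,i}} = f_{\iota_{a,i}} f_{\tau_j}$, consistency (Proposition~\ref{prop:eqofgens}) forces $f_{\iota_{a,i}\tau_j} = f_{\tau_j \iota_{a,i}}$, and the two equations combine to give the desired commutation.

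Without loss of generality assume $i > j+1$; the case $i < j$ is symmetric. The plan is to define an element $\mathbf{r}_t \in \Theta$ whose action simultaneously encodes the transposition of $x_j$ and $x_{j+1}$ (using one extra free position at $j+2$ to realise the swap as a rank-$r$ idempotent pre-image, after the manner of the $\tau_j$ construction in Lemma~\ref{(i i+1)}) and the twist $x_i \mapsto a x_i$ (using one more extra free position near $i$, in the style of the $\iota_{a,i}$ construction). Choosing $\lambda, \mu \in \Lambda$ that respectively omit the extra positions, and a suitable auxiliary $\mathbf{r}_s \in \Theta$ whose image carries $\iota_{a,i}$ but is the identity on the $\tau_j$-block, I expect to read off
$$\begin{pmatrix} \mathbf{p}_{\lambda s} & \mathbf{p}_{\lambda t}\\ \mathbf{p}_{\mu s} & \mathbf{p}_{\mu t}\end{pmatrix} = \begin{pmatrix}\iota_{a,i} & \iota_{a,i}\tau_j\\ \epsilon & \tau_j\end{pmatrix},$$
which by (P1) yields $f_{\iota_{a,i}\tau_j} = f_{\tau_j} f_{\iota_{a,i}}$. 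A second auxiliary $\mathbf{r}_l \in \Theta$, identity on the $\iota_{a,i}$-block and carrying $\tau_j$, paired with a new column $\eta \in \Lambda$ that omits the $\iota$-related free position instead, should then give
$$\begin{pmatrix}\mathbf{p}_{\lambda l} & \mathbf{p}_{\lambda t}\\ \mathbf{p}_{\eta l} & \mathbf{p}_{\eta t}\end{pmatrix} = \begin{pmatrix}\tau_j & \tau_j \iota_{a,i}\\ \epsilon & \iota_{a,i}\end{pmatrix},$$
whence $f_{\tau_j \iota_{a,i}} = f_{\iota_{a,i}} f_{\tau_j}$.

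The main obstacle is purely bookkeeping: certifying that each constructed $\mathbf{r}$-element lies in $\Theta$ (i.e.\ every kernel class contains its minimal element mapped with coefficient $1_G$) and that the sandwich-matrix entries read off from the chosen $\lambda, \mu, \eta$ are exactly the four elements of $H$ claimed. Because $n \geq r+2$, the positions $r+1, r+2, \ldots, n$ provide the at most two ``spare'' slots needed to house both the $\tau_j$-swap and the scalar $a$ inside a single rank-$r$ element of $\Theta$, so there is always enough room for the construction.
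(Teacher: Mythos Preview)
Your plan is correct and matches the paper's proof essentially step for step: the paper also builds a single $\mathbf{r}_t$ encoding both the transposition and the scalar, then pairs it with two auxiliary elements $\mathbf{r}_s$ and $\mathbf{r}_l$ to read off the two singular squares $\begin{pmatrix}\iota_{a,i} & \iota_{a,i}\tau_j\\ \epsilon & \tau_j\end{pmatrix}$ and $\begin{pmatrix}\tau_j & \tau_j\iota_{a,i}\\ \epsilon & \iota_{a,i}\end{pmatrix}$, concluding via consistency. The only cosmetic difference is that the paper writes out the explicit $\mathbf{r}$-elements and the $\neg(u,v)$ indices (and happens to treat the case $i<j$ rather than $i>j+1$), whereas you give the construction schematically.
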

\begin{proof}
Suppose that $i<j$; the proof for $j<i$ is entirely similar. Then
\begin{align*}
\iota_{a,i}\tau_j=\left(\begin{array}{ccccccccccccccc}
x_{1} & \cdots & x_{i-1} & x_i & x_{i+1} & \cdots & x_{j-1} & x_j & x_{j+1}  \\
x_{1} & \cdots & x_{i-1} & ax_i &  x_{i+1} & \cdots & x_{j-1} & x_{j+1}& x_j
\end{array}\right. & \\
\left.\begin{array}{ccc}
x_{j+2} & \cdots & x_r \\
x_{j+2} & \cdots & x_r
\end{array}\right). &
\end{align*}
Define $\mathbf{r}_t$ by
\begin{align*}
\left(\begin{array}{cccccccccccccccc}
x_{1} & \cdots & x_{i-1} & x_i & x_{i+1} & x_{i+2} & \cdots & x_{j} & x_{j+1} & x_{j+2} & x_{j+3} & x_{j+4} & \cdots \\
x_{1} & \cdots & x_{i-1} & x_i & ax_{i} & x_{i+1} & \cdots & x_{j-1} & x_{j} & x_{j+1} & x_j & x_{j+2} & \cdots
\end{array}\right. & \\
\left.\begin{array}{ccccc}
\cdots & x_{r+2} & x_{r+3} & \cdots & x_n\\
\cdots & x_{r} & x_{1} & \cdots & x_1\end{array}\right). &
\end{align*}
Let $\lambda=\neg(i,j+1)$ and $\mu=\neg(i+1,j+1)$. Then
$\mathbf{p}_{\lambda t}=\iota_{a,i}\tau_j$ and $\mathbf{p}_{\mu t}=\tau_j.$

Define $\mathbf{r}_s$ by
\begin{align*}
\left(\begin{array}{cccccccccccccccc}
x_{1} & \cdots & x_{i-1} & x_i & x_{i+1} & x_{i+2} & \cdots & x_{j} & x_{j+1} & x_{j+2} & x_{j+3} & x_{j+4} & \cdots \\
x_{1} & \cdots & x_{i-1} & x_i & ax_{i} & x_{i+1} & \cdots & x_{j-1} & x_{j} & x_{j} & x_{j+1} & x_{j+2} & \cdots
\end{array}\right. & \\
\left.\begin{array}{ccccc}
\cdots & x_{r+2} & x_{r+3} & \cdots & x_n\\
\cdots & x_{r} & x_{1} & \cdots & x_1\end{array}\right). &
\end{align*}
Then $\mathbf{p}_{\lambda s}=\iota_{a,i}$ and
$\mathbf{p}_{\mu s}=\epsilon$. Hence we have $$\left(\begin{array}{cc}
\mathbf{p}_{\lambda s} & \mathbf{p}_{\lambda t}\\
\mathbf{p}_{\mu s} & \mathbf{p}_{\mu t}\end{array}\right)=\left(\begin{array}{cc}
\iota_{a,i} & \iota_{a,i}\tau_j\\
\epsilon & \tau_j \end{array}\right)$$ implying $f_{\iota_{a,i}\tau_j}=f_{\tau_j}f_{\iota_{a,i}}.$

Next we define $\eta=\neg(i,j+3)$, so that  $\mathbf{p}_{\eta t}=\iota_{a,i}$. Now let $\mathbf{r}_l$ be
\begin{align*}
\left(\begin{array}{cccccccccccccccc}
x_{1} & \cdots & x_{i-1} & x_i & x_{i+1} & x_{i+2} & \cdots & x_{j} & x_{j+1} & x_{j+2} & x_{j+3} & x_{j+4} & \cdots \\
x_{1} & \cdots & x_{i-1} & x_i & x_{i} & x_{i+1} & \cdots & x_{j-1} & x_{j} & x_{j+1} & x_j & x_{j+2} & \cdots
\end{array}\right. & \\
\left.\begin{array}{ccccc}
\cdots & x_{r+2} & x_{r+3} & \cdots & x_n\\
\cdots & x_{r} & x_{1} & \cdots & x_1\end{array}\right). &
\end{align*}
Then $\mathbf{p}_{\lambda l}=\tau_j$ and $\mathbf{p}_{\eta
l}=\epsilon$, so
$$\left(\begin{array}{cc}
\mathbf{p}_{\lambda l} & \mathbf{p}_{\lambda t}\\
\mathbf{p}_{\eta l} & \mathbf{p}_{\eta t}\end{array}\right)=\left(\begin{array}{cc}
\tau_j & \tau_j\iota_{a,i}\\
\epsilon & \iota_{a,i} \end{array}\right)$$ implying $f_{\tau_j\iota_{a,i}}=f_{\iota_{a,i}}f_{\tau_j}$, so $f_{\tau_j}f_{\iota_{a,i}}=f_{\iota_{a,i}}f_{\tau_j}.$
\end{proof}

\begin{lem} For any $i\in [1,r-1]$ and $a\in G$ we have
$f_{\iota_{a,i}}f_{\tau_i}=f_{\tau_i}f_{\iota_{a,i+1}}.$
\end{lem}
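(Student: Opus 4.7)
My plan is to prove the lemma by exhibiting two singular squares of $P$ that together identify both $f_{\iota_{a,i}}f_{\tau_i}$ and $f_{\tau_i}f_{\iota_{a,i+1}}$ with a common element $f_{\alpha'}\in\overline{\overline{H}}$. A direct computation in $H$ shows
\[
\alpha' := \tau_i\,\iota_{a,i} = \iota_{a,i+1}\,\tau_i = \begin{pmatrix} x_1 & \cdots & x_{i-1} & x_i & x_{i+1} & x_{i+2} & \cdots & x_r \\ x_1 & \cdots & x_{i-1} & x_{i+1} & ax_i & x_{i+2} & \cdots & x_r \end{pmatrix},
\]
which is essentially the ``other'' form of relation (W7) (derivable from (W7) and $\tau_i^2=\epsilon$). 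The decomposition pattern used throughout Section~\ref{sec:main} (see Lemma~\ref{further decomposition}) reads $f_{\beta\gamma}=f_\gamma f_\beta$, so the two factorisations of $\alpha'$ will translate to $f_{\alpha'}=f_{\iota_{a,i}}f_{\tau_i}$ and $f_{\alpha'}=f_{\tau_i}f_{\iota_{a,i+1}}$ respectively, and concatenating these identities gives the lemma.

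For the first singular square I take $\lambda=\neg(i,i+3)$, $\mu=\neg(i,i+1)$ and define elements $\mathbf{r}_t,\mathbf{r}_s\in\Theta$ supported on $[1,r+2]$: $\mathbf{r}_t$ fixes $x_1,\dots,x_{i+1}$, sends $i+2\mapsto ax_i$, $i+3\mapsto x_{i+1}$, shifts $j\mapsto x_{j-2}$ for $j\in[i+4,r+2]$, and collapses the tail to $x_1$; and $\mathbf{r}_s$ is identical except that $i+2\mapsto x_i$. A routine verification places both in $\Theta$ and produces the matrix
\[
\begin{pmatrix} \mathbf{p}_{\lambda s} & \mathbf{p}_{\lambda t} \\ \mathbf{p}_{\mu s} & \mathbf{p}_{\mu t} \end{pmatrix} = \begin{pmatrix} \tau_i & \alpha' \\ \epsilon & \iota_{a,i} \end{pmatrix},
\]
whence (P1) yields $f_{\alpha'}=f_{\iota_{a,i}}f_{\tau_i}$. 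For the second square I take $\lambda'=\neg(i,i+2)$, $\mu'=\neg(i,i+3)$ and build a fresh pair $\mathbf{r}_{t'},\mathbf{r}_{s'}$ on the same template, but with the roles of ``$ax_i$'' and ``$x_i$'' at positions $i+2,i+3$ interchanged in $\mathbf{r}_{t'}$, and with $\mathbf{r}_{s'}$ defined by $j\mapsto x_j$ on $[1,i]$, $i+1\mapsto x_i$, $i+2\mapsto x_{i+1}$, $i+3\mapsto ax_{i+1}$, followed by the usual shift for $j\in[i+4,r+2]$. Again (P1) delivers $f_{\alpha'}=f_{\tau_i}f_{\iota_{a,i+1}}$.

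The main technical obstacle is that $\iota_{a,i}$ and $\tau_i$ impose incompatible demands on a single $\mathbf{r}_t$: $\iota_{a,i}$ needs a position strictly beyond the one carrying $ax_i$ to map to the un-decorated $x_{i+1}$, whereas $\tau_i$ needs a position strictly beyond the one mapping to $x_{i+1}$ to map to the un-decorated $x_i$. Under the tight budget $n\geq r+2$ dictated by the main theorem, no single $\mathbf{r}_t$ can satisfy both, so two distinct constructions must be built rather than recycling one $\mathbf{r}_t$ as in some earlier lemmas of this section; once this is recognised, the explicit elements above slot cleanly into the framework of (P1) and give the required identity.
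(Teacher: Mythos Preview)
Your argument is correct and follows the same two-singular-square strategy as the paper, but with a small twist: you route through the element $\alpha'=\tau_i\iota_{a,i}=\iota_{a,i+1}\tau_i$, whereas the paper routes through $\iota_{a,i}\tau_i=\tau_i\iota_{a,i+1}$. Your choice yields the target identity $f_{\iota_{a,i}}f_{\tau_i}=f_{\tau_i}f_{\iota_{a,i+1}}$ directly, without the final conjugation by $f_{\tau_i}$ (via Lemma~\ref{(i i+1)}) that the paper needs at the end. On the other hand, the paper's choice allows it to recycle a single $\mathbf{r}_t$ across both squares (pairing it with $\mathbf{r}_s$ and then $\mathbf{r}_l$), so only three elements of $\Theta$ are constructed rather than your four; your closing remark that ``no single $\mathbf{r}_t$ can satisfy both'' is accurate for your $\alpha'$ but is an artefact of that choice, not a universal obstruction. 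Either way the bookkeeping is routine and both versions fit within the $n\geq r+2$ budget.
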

\begin{proof} We have
$$\iota_{a,i}\tau_i=\left(\begin{array}{cccccccccccccccccc}
x_{1} & \cdots & x_{i-1} & x_i & x_{i+1} & x_{i+2} & \cdots & x_r \\
x_{1} & \cdots & x_{i-1} & ax_{i+1} &  x_{i} & x_{i+2} & \cdots & x_r \end{array}\right)={\tau_i}{\iota_{a,i+1}}.$$
Define $\mathbf{r}_t$ by
\begin{align*}
\left(\begin{array}{ccccccccccccccc}
x_{1} & \cdots & x_{i-1} & x_i & x_{i+1} & x_{i+2} & x_{i+3} & x_{i+4} & \cdots & x_{r+2}  \\
x_{1} & \cdots & x_{i-1} & x_{i} &  x_{i+1} & ax_{i+1} & x_i & x_{i+2} & \cdots & x_r
\end{array}\right. & \\
\left.\begin{array}{ccc}
x_{r+3} & \cdots & x_n \\
x_1 & \cdots & x_1\end{array}\right).
\end{align*}
Define $\lambda=\neg(i,i+1)$ and $\mu=\neg(i,i+2)$. Then $\mathbf{p}_{\lambda t}=\iota_{a,i}\tau_i$ and
$\mathbf{p}_{\mu t}=\tau_i.$ Define $\mathbf{r}_s$ by
$$\left(\begin{array}{cccccccccccccccccc}
x_{1} & \cdots & x_{i-1} & x_i & x_{i+1} & x_{i+2} & x_{i+3} & \cdots & x_{r+2} & x_{r+3} & \cdots & x_n \\
x_{1} & \cdots & x_{i-1} & x_{i} &  x_{i} & ax_{i} & x_{i+1} & \cdots & x_r & x_1 & \cdots & x_1\end{array}\right).$$
Then $\mathbf{p}_{\lambda s}=\iota_{a,i}$ and $\mathbf{p}_{\mu s}=\epsilon$, so we have $$\left(\begin{array}{cc}
\mathbf{p}_{\lambda s} & \mathbf{p}_{\lambda t}\\
\mathbf{p}_{\mu s} & \mathbf{p}_{\mu t}\end{array}\right)=\left(\begin{array}{cc}
\iota_{a,i} & \iota_{a,i}\tau_i\\
\epsilon & \tau_i \end{array}\right)$$ so $f_{\iota_{a,i}\tau_i}=f_{\tau_i}f_{\iota_{a,i}}.$

Now put $\eta=\neg(i+1,i+3)$, so that $\mathbf{p}_{\eta t}=\iota_{a,i+1}.$ Define $\mathbf{r}_l$ by
$$\left(\begin{array}{cccccccccccccccccc}
x_{1} & \cdots & x_{i-1} & x_i & x_{i+1} & x_{i+2} & x_{i+3} & x_{i+4} & \cdots & x_{r+2} & x_{r+3} & \cdots & x_n \\
x_{1} & \cdots & x_{i-1} & x_{i} &  x_{i} & x_{i+1} & x_{i} & x_{i+2} & \cdots & x_r & x_1 & \cdots &
x_1\end{array}\right).$$ Then $\mathbf{p}_{\lambda l}=\tau_i$ and $\mathbf{p}_{\eta l}=\epsilon$, so
$$\left(\begin{array}{cc}
\mathbf{p}_{\lambda l} & \mathbf{p}_{\lambda t}\\
\mathbf{p}_{\eta l} & \mathbf{p}_{\eta t}\end{array}\right)=\left(\begin{array}{cc}
\tau_i & \tau_i\iota_{a,i+1}\\
\epsilon & \iota_{a,i+1} \end{array}\right)$$ so that $f_{\tau_i\iota_{a,i+1}}=f_{\iota_{a,i+1}}f_{\tau_i}$. Thus
$f_{\tau_i}f_{\iota_{a,i}}=f_{\iota_{a,i+1}}f_{\tau_i}$ and so
$f_{\iota_{a,i}}f_{\tau_i}=f_{\tau_i}f_{\iota_{a,i+1}},$ bearing in mind Lemmas 9.5 and 9.8.
\end{proof}

We denote by $\Omega$ all the following relations we have obtained so far on the set of generators $$T=\{f_{\tau_i},
f_{\iota_{a,j}}: ~1\leq i\leq r-1, 1\leq j\leq r, a\in G\}$$ of $ \overline{\overline{H}}$:

$(T1)$ $f_{\tau_i}f_{\tau_i}=1$, $1\leq i\leq r-1.$

$(T2)$ $f_{\tau_i}f_{\tau_j}=f_{\tau_j}f_{\tau_i}$, $j\pm 1\neq i\neq j$.

$(T3)$ $f_{\tau_i}f_{\tau_{i+1}}f_{\tau_i}=f_{\tau_{i+1}}f_{\tau_i}f_{\tau_{i+1}}$, $1\leq i\leq r-2.$

$(T4)$ $f_{\iota_{a,i}}f_{\iota_{b,j}}=f_{\iota_{b,j}}f_{\iota_{a,i}}$, $a,b\in G$ and $1\leq i\neq j\leq r.$

$(T5)$ $f_{\iota_{b,i}}f_{\iota_{a,i}}=f_{\iota_{ab,i}}$, $1\leq i\leq r$ and $a,b\in G.$

$(T6)$ $f_{\iota_{a,i}}f_{\tau_j}=f_{\tau_j}f_{\iota_{a,i}},$ $1\leq i\neq j, j+1\leq r.$

$(T7)$ $f_{\iota_{a,i}}f_{\tau_i}=f_{\tau_i}f_{\iota_{a,i+1}},$ $1\leq i\leq r-1$ and $a\in G.$

\bigskip

Note that the relations $(T1)-(T7)$ match exactly the relations $(W1)-(W7)$, except for a twist between $(T5)$ and $(W5)$, as we mentioned before.

\bigskip

We now have all the ingredients in place to prove the following.

\begin{prop}\label{prop:main}
The group $ \overline{\overline{H}}$ with a presentation $\mathcal{Q}=\langle S: \Gamma\rangle$ of Lemma \ref{presentation} is isomorphic to the presentation $\mathcal{U}=\langle Y:~ \Upsilon \rangle$ of $H$ given in Lemma~\ref{presentation of wreath product}, so that $\overline{H}\cong H$.
\end{prop}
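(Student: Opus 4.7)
The plan is to construct mutually inverse homomorphisms $\boldsymbol{\Psi}\colon\overline{\overline{H}}\to H$ and $\boldsymbol{\Phi}\colon H\to\overline{\overline{H}}$, using one presentation for each direction. For $\boldsymbol{\Psi}$ I take $\mathcal{Q}=\langle S:\Gamma\rangle$ and put $f_\phi\boldsymbol{\Psi}=\phi^{-1}$ on the free generators. Relation $(P2)$ is immediate since $\epsilon^{-1}=\epsilon=1_H$. For $(P1)$, if $(\phi,\varphi,\psi,\sigma)$ is singular then by definition $\phi^{-1}\psi=\varphi^{-1}\sigma$ in $H$, which rearranges to $\psi\sigma^{-1}=\phi\varphi^{-1}$, and this is precisely the image of the relator $f_\phi^{-1}f_\varphi=f_\psi^{-1}f_\sigma$. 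Thus $\boldsymbol{\Psi}$ descends to a well-defined homomorphism; the use of $\phi^{-1}$ rather than $\phi$ is forced by the order-reversing character of $(P1)$, in the same spirit as in Theorem~\ref{thm:n/3}.

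For $\boldsymbol{\Phi}$ I use $\mathcal{U}=\langle Y:\Upsilon\rangle$ and set $\tau_i\mapsto f_{\tau_i}$, $\iota_{a,i}\mapsto f_{\iota_{a^{-1},i}}$. Under this assignment the relations $(W1)$, $(W2)$, $(W3)$, $(W4)$, $(W6)$, $(W7)$ translate routinely into the corresponding established relations $(T1)$--$(T4)$, $(T6)$, $(T7)$ (after replacing $a$ by $a^{-1}$ where needed). The one delicate check is $(W5)$: the image of $\iota_{a,i}\iota_{b,i}=\iota_{ab,i}$ is the requirement $f_{\iota_{a^{-1},i}}f_{\iota_{b^{-1},i}}=f_{\iota_{(ab)^{-1},i}}$, and since $(ab)^{-1}=b^{-1}a^{-1}$ this is exactly an instance of $(T5)$ with the roles of $a$ and $b$ suitably renamed. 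Hence the inversion built into $\boldsymbol{\Phi}$ on the $\iota_{a,i}$ absorbs exactly the twist between $(T5)$ and $(W5)$, and $\boldsymbol{\Phi}$ descends to a homomorphism.

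It then remains to verify that $\boldsymbol{\Psi}\boldsymbol{\Phi}$ and $\boldsymbol{\Phi}\boldsymbol{\Psi}$ are the identity, which it suffices to do on generating sets. Directly, $(\tau_i)\boldsymbol{\Phi}\boldsymbol{\Psi}=f_{\tau_i}\boldsymbol{\Psi}=\tau_i$ and $(\iota_{a,i})\boldsymbol{\Phi}\boldsymbol{\Psi}=f_{\iota_{a^{-1},i}}\boldsymbol{\Psi}=(\iota_{a^{-1},i})^{-1}=\iota_{a,i}$. For the other composition, Lemmas~\ref{further decomposition} and~\ref{transposition} guarantee that $\overline{\overline{H}}$ is generated by $T=\{f_{\tau_i},f_{\iota_{a,j}}\}$, and on these one has $(f_{\tau_i})\boldsymbol{\Psi}\boldsymbol{\Phi}=(\tau_i)\boldsymbol{\Phi}=f_{\tau_i}$ and $(f_{\iota_{a,i}})\boldsymbol{\Psi}\boldsymbol{\Phi}=(\iota_{a^{-1},i})\boldsymbol{\Phi}=f_{\iota_{a,i}}$. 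Combined with Lemma~\ref{presentation} this yields the chain $\overline{H}\cong\overline{\overline{H}}\cong H\cong G\wr\mathcal{S}_r$.

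The main obstacle is essentially bookkeeping: one must choose the direction of inversion in $\boldsymbol{\Phi}$ and $\boldsymbol{\Psi}$ so that the single twist between $(T5)$ and $(W5)$, and the reversal of products in $(P1)$, are both neutralised, while the other six pairs of correspondences remain intact. Once the definitions are pinned down in the mutually inverse way above, every verification reduces to an instance of a previously established $(T)$-relation or to the defining property of a singular quadruple.
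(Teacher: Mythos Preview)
Your proof is correct and follows essentially the same route as the paper: the maps $\boldsymbol{\Psi}$ and $\boldsymbol{\Phi}$ coincide with the paper's $\overline{\boldsymbol{\psi}}$ and $\overline{\boldsymbol{\theta}}$ (the latter being defined there via $\tau_i\mapsto f_{\tau_i}^{-1}$, $\iota_{a,j}\mapsto f_{\iota_{a,j}}^{-1}$, which agrees with your assignment by Lemmas~\ref{(i i+1)} and 9.8), and the handling of the $(W5)$/$(T5)$ twist is identical. The only cosmetic difference is that you verify both composites on generators, whereas the paper checks one composite and argues surjectivity of $\overline{\boldsymbol{\theta}}$ from the fact that $T$ generates $\overline{\overline{H}}$.
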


\begin{proof}
 We define a map ${\boldsymbol \theta}: \widetilde{Y}\longrightarrow  \overline{\overline{H}}$ by $$\tau_i{\boldsymbol \theta}=f_{\tau_i}^{-1}(=f_{\tau_i}), ~\iota_{a,j}{\boldsymbol \theta}=f_{\iota_{a,j}}^{-1}(=f_{\iota_{a^{-1},j}})$$ where $1\leq i\leq r-1,\ 1\leq j\leq r, a\in G.$ Now we claim that $\Upsilon \subseteq \ker {\boldsymbol \theta}$. Clearly, the relations corresponding to $(W1)-(W4)$ and $(W6)$ and $(W7)$ lie in  $\ker {\boldsymbol \theta}$. Moreover, considering $(W5)$ $$(\iota_{a,i}\iota_{b,i}){\boldsymbol \theta}=\iota_{a,i}{\boldsymbol \theta}\iota_{b,i}{\boldsymbol \theta}=f_{\iota_{a,i}}^{-1}f_{\iota_{b,i}}^{-1}=f_{\iota_{a^{-1},i}}f_{\iota_{b^{-1},i}}=f_{\iota_{b^{-1}a^{-1},i}}=f_{\iota_{(ab)^{-1},i}}=\iota_{ab,i}{\boldsymbol \theta}$$ so that $\Upsilon \subseteq \ker {\boldsymbol \theta}$, and hence  there exists a well defined morphism $\overline{{\boldsymbol \theta}}: H\longrightarrow  \overline{\overline{H}}$ given by $\tau_i\overline{{\boldsymbol \theta}}=f_{\tau_i}^{-1}$ and $\iota_{a,j}\overline{{\boldsymbol \theta}}=f_{\iota_{a,j}}^{-1}$, where $1\leq i\leq r-1,\ 1\leq j\leq r, a\in G.$

Conversely, we define ${\boldsymbol \psi}: \widetilde{S}\longrightarrow H$ by $f_\phi{\boldsymbol \psi}=\phi^{-1}$. We show that $\Gamma\subseteq \ker {\boldsymbol \psi}$. Clearly, $f_\epsilon{\boldsymbol \psi}=\epsilon^{-1}=\epsilon=1{\boldsymbol \psi}.$ Suppose that
 $(\phi,\varphi, \psi,\sigma)$ is singular, giving  $\phi\varphi^{-1}=\psi\sigma^{-1}$. Then $$(f_\phi^{-1}f_\varphi){\boldsymbol \psi}=(f_\phi{\boldsymbol \psi})^{-1}f_\varphi{\boldsymbol \psi}=\phi\varphi^{-1}=\psi\sigma^{-1}=(f_\psi{\boldsymbol \psi})^{-1}f_\sigma{\boldsymbol \psi}=(f_\psi^{-1}f_\sigma){\boldsymbol \psi}$$ so $\Gamma \subseteq \ker {\boldsymbol \psi}.$ Thus there exists a well defined morphism $\overline{{\boldsymbol \psi}}:  \overline{\overline{H}}\longrightarrow H$ given by $f_\phi\overline{{\boldsymbol \psi}}=\phi^{-1}.$ Then $$\tau_i\overline{{\boldsymbol \theta}}~\overline{{\boldsymbol \psi}}=f_{\tau_i}^{-1}\overline{{\boldsymbol \psi}}=(f_{\tau_i}\overline{{\boldsymbol \psi}})^{-1}=\tau_i$$ and $$\iota_{a,i}\overline{{\boldsymbol \theta}}~\overline{{\boldsymbol \psi}}=f_{\iota_{a,i}}^{-1}\overline{{\boldsymbol \psi}}=(f_{\iota_{a,i}}\overline{{\boldsymbol \psi}})^{-1}=\iota_{a,i}$$ hence $\overline{{\boldsymbol \theta}}~\overline{{\boldsymbol \psi}}$ is the identity mapping, and  so $\overline{{\boldsymbol \theta}}$ is one-one.
 Since $T$ is a set of generators for $\overline{\overline{H}}$, it is clear that $\overline{{\boldsymbol \theta}}$ is onto, and so $$\overline{H}\cong \overline{\overline{H}}\cong H \cong G\wr S_r.$$
\end{proof}

We can now state the  main theorem of this paper.

\begin{them}\label{thm:n-2}
Let $\en F_n(G)$ be the endomorphism monoid of a free $G$-act $F_n(G)$ on $n$ generators, where $n\in \mathbb{N}$ and $n\geq 3$, let  $E$ be the biordered set of idempotents of $\en F_n(G)$, and  let $\ig(E)$ be the free idempotent generated semigroup over $E$.

For any idempotent $\epsilon\in E$ with rank $r$, where $1\leq r\leq n-2$, the maximal subgroup $\overline{H}$ of $\ig(E)$ containing $\overline{\epsilon}$ is isomorphic to  the maximal subgroup $H$ of $\en F_n(G)$ containing $\epsilon$  and hence to $G\wr\mathcal{S}_r$.
\end{them}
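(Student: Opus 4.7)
The plan is to assemble the structural facts built across Sections~\ref{sec:pres}--\ref{sec:main} into the final statement. The key observation is that Proposition~\ref{prop:main} already gives us $\overline{H}\cong H\cong G\wr\mathcal{S}_r$ in the case of the distinguished idempotent $\epsilon=\epsilon_{11}$ studied throughout the paper; so at this point essentially all the hard work is complete. What remains is to lift this conclusion from the fixed $\epsilon_{11}$ to an arbitrary rank-$r$ idempotent.

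First I would recall that Lemma~\ref{lem:autfr} identifies $H=H_{\epsilon_{11}}$ with $\aut F_r(G)\cong G\wr\mathcal{S}_r$, so combined with Proposition~\ref{prop:main} we immediately obtain
\[
H_{\overline{\epsilon_{11}}}\;\cong\; H_{\epsilon_{11}}\;\cong\; G\wr\mathcal{S}_r.
\]
This discharges the theorem for the single idempotent $\epsilon_{11}$, which is the base of the induction on rising points and the whole presentation-theoretic machinery of Sections~\ref{sec:simpleforms}--\ref{sec:main}.

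Next I would deal with a general rank-$r$ idempotent $\epsilon\in E$. By Lemma~\ref{lem:green}(iii), any two rank-$r$ elements of $\en F_n(G)$ are $\dee$-related, so $\epsilon\,\dee\,\epsilon_{11}$ in $\langle E\rangle$. Proposition~\ref{prop:remarkable}(ii) guarantees that $\boldsymbol{\phi}$ induces a bijection between the $\ar$-classes (respectively, $\el$-classes) of the $\dee$-class of $\overline{\epsilon_{11}}$ in $\ig(E)$ and those of $D_{\epsilon_{11}}$ in $\langle E\rangle$, which in particular shows that $\overline{\epsilon}$ and $\overline{\epsilon_{11}}$ lie in a common $\dee$-class of $\ig(E)$. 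Standard semigroup theory (the fact that group $\eh$-classes within a single $\dee$-class of any semigroup are mutually isomorphic, via conjugation through a trace of idempotents) then yields $H_{\overline{\epsilon}}\cong H_{\overline{\epsilon_{11}}}$, completing the proof.

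The only mild obstacle in this plan is making sure the final transfer step is rigorous: one must check that the argument producing the isomorphism between group $\eh$-classes in a common $\dee$-class does not tacitly require finiteness or regularity properties that $\ig(E)$ might lack. Since the classical proof of this fact (via Green's Lemma, see \cite[Chapter II]{howie:1995}) requires only that the $\dee$-class in question contains an idempotent, which it does by assumption, no such issues arise, and the theorem follows.
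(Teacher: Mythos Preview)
Your proposal is correct and matches the paper's approach: the paper states Theorem~\ref{thm:n-2} immediately after Proposition~\ref{prop:main} with no further argument, treating it as a direct consequence, exactly as you do. Your explicit transfer from $\epsilon_{11}$ to an arbitrary rank-$r$ idempotent via Proposition~\ref{prop:remarkable} and the isomorphism of group $\eh$-classes within a single $\dee$-class is in fact more detailed than what the paper itself provides.
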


Note that if $\epsilon$ is an idempotent with rank $n$, that is, the identity map, then $\overline{H}$ is the trivial group, since it is generated (in $\ig(E)$) by idempotents of the same rank. On the other hand, if
the rank of $\epsilon$ is $n-1$,  then $\overline{H}$ is the free group as there are no non-trivial singular squares in the $\mathcal{D}$-class of $\epsilon$ in $\en F_n(G).$

\bigskip

If $\e$ is a rank 1 idempotent, then $\overline{H}\cong H\cong G,$ so that we re-obtain the main result of \cite{gray:2012}, \cite{gouldyang:2013} and \cite{dolinkaRuskuc:2013} by yet another method.

\begin{coro}\cite{gray:2012, gouldyang:2013, dolinkaRuskuc:2013}
Every group is a maximal subgroup of $\ig(E)$, for some $E.$
\end{coro}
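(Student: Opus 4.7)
The plan is to deduce this as an immediate specialisation of Theorem~\ref{thm:n-2} to the case $r=1$. Given any group $G$, I would first fix some integer $n\geq 3$ so that the hypothesis $1\leq r\leq n-2$ of the main theorem is satisfied with $r=1$. I would then form the free left $G$-act $F_n(G)$, set $E=E(\en F_n(G))$, and choose any rank $1$ idempotent $\epsilon\in E$; a convenient concrete choice is the endomorphism that collapses every free generator $x_i$ to $x_1$.

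Applying Theorem~\ref{thm:n-2} with this $\epsilon$ then gives that the maximal subgroup $\overline{H}$ of $\ig(E)$ containing $\overline{\epsilon}$ is isomorphic to the corresponding maximal subgroup $H$ of $\en F_n(G)$, and hence to $G\wr \mathcal{S}_1$. The last step is the trivial observation that $\mathcal{S}_1$ is the one-element group, so that $G\wr\mathcal{S}_1\cong G$, yielding $\overline{H}\cong G$ as required.

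There is essentially no obstacle here: the entire substance of the corollary has been packaged into the main theorem, and the remark immediately preceding the corollary in the excerpt already signals the intended reduction. The only point worth a brief check is the insistence that $n\geq 3$, which is needed to invoke Theorem~\ref{thm:n-2} and is precisely the standing hypothesis on $n$ throughout Section~\ref{sec:enF} onwards; since $G$ itself places no restriction on the size of $n$, this is harmless.
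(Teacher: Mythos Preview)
Your proposal is correct and matches the paper's own argument: the corollary is stated without a separate proof, relying on the sentence immediately preceding it, which observes that for a rank~$1$ idempotent $\epsilon$ one has $\overline{H}\cong H\cong G$. Your explicit choice of $n\geq 3$ and the identification $G\wr\mathcal{S}_1\cong G$ simply spell out what the paper leaves implicit.
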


Finally, if $G$ is trivial, then $\en F_n(G)$ is essentially $\mathcal{T}_n$, so we deduce the following result from
\cite{gray:2012a}.

\begin{coro}\cite{gray:2012a}
 Let $n\in \mathbb{N}$ with $n\geq 3$ and let  $\ig(E)$ be the free idempotent generated semigroup over the biordered set $E$ of idempotents of $\mathcal{T}_n$.

For any idempotent $\epsilon\in E$ with rank $r$, where $1\leq r\leq n-2$, the maximal subgroup $\overline{H}$ of $\ig(E)$ containing $\overline{\epsilon}$ is isomorphic to  the maximal subgroup $H$ of $\mathcal{T}_n$ containing $\epsilon,$  and hence to $\mathcal{S}_r$.

\end{coro}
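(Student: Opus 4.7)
The plan is to derive this corollary as an immediate specialization of Theorem~\ref{thm:n-2} to the case where $G$ is the trivial group, so the work is simply to check that the identifications go through cleanly. Concretely, I would set $G=\{1_G\}$ and observe that the free left $G$-act $F_n(G)=\bigcup_{i=1}^n Gx_i$ then has exactly $n$ elements $\{x_1,\hdots,x_n\}$, with $G$ acting trivially. Every endomorphism of $F_n(G)$ is determined by its underlying map on this $n$-element set, and every such map extends (uniquely and trivially) to a $G$-endomorphism, so the assignment $\alpha\mapsto \overline{\alpha}$ gives a monoid isomorphism $\en F_n(G)\cong \mathcal{T}_n$. Equivalently, Theorem~\ref{them:wreath} says $\en F_n(G)\cong G\wr \mathcal{T}_n$, and with $G$ trivial this collapses to $\mathcal{T}_n$.

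Next I would note that any monoid isomorphism restricts to an isomorphism of the sets of idempotents preserving both quasi-orders $\leqel$ and $\leqar$, hence is an isomorphism of biordered sets. Thus the isomorphism above identifies $E=E(\en F_n(G))$ with $E(\mathcal{T}_n)$ as biordered sets. Since the free idempotent generated semigroup $\ig(E)$ is defined purely from the biordered set structure (via the presentation in the introduction), this induces an isomorphism $\ig(E)\cong \ig(E(\mathcal{T}_n))$ sending $\overline{\epsilon}$ to the image of $\epsilon$ in $\ig(E(\mathcal{T}_n))$, and restricting to an isomorphism of the respective maximal subgroups containing these idempotents.

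I would then verify that rank is preserved under the identification. In the $G$-act picture the rank of $\alpha$ is the minimal number of free generators in $\im\alpha$, while in $\mathcal{T}_n$ the rank is $|\im\overline{\alpha}|$; with $G$ trivial these coincide. So an idempotent $\epsilon\in E$ of rank $r$ corresponds to an idempotent of $\mathcal{T}_n$ of rank $r$, and Theorem~\ref{thm:n-2} applied with our trivial $G$ and $1\leq r\leq n-2$ yields $\overline{H}\cong H\cong G\wr \mathcal{S}_r=\{1\}\wr \mathcal{S}_r\cong \mathcal{S}_r$. Transporting this isomorphism along the identification $\ig(E)\cong \ig(E(\mathcal{T}_n))$ gives the statement of the corollary.

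There is really no obstacle here beyond bookkeeping: the entire content of the corollary is already contained in Theorem~\ref{thm:n-2}, and the proof reduces to the two structural observations that (i)~trivialising $G$ collapses the wreath-product description of $\en F_n(G)$ to $\mathcal{T}_n$ and of $H$ to $\mathcal{S}_r$, and (ii)~isomorphic biordered sets produce isomorphic free idempotent generated semigroups with matching $\mathcal{H}$-classes. The only mild subtlety worth flagging is the word ``essentially'' used earlier in the paper: $F_n(G)$ is literally $\{x_1,\hdots,x_n\}$ with a degenerate (one-element) $G$-action, so one should be explicit that ``$G$-endomorphism'' coincides with ``set map'' in this degenerate case, which is precisely what makes $\en F_n(G)$ equal to $\mathcal{T}_n$ rather than merely embed in it.
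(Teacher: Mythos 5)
Your proposal is correct and follows exactly the paper's route: the paper derives this corollary from Theorem~\ref{thm:n-2} by noting that for trivial $G$ the monoid $\en F_n(G)$ is essentially $\mathcal{T}_n$, which is precisely your specialization argument (you merely spell out the bookkeeping about biordered-set isomorphisms and rank that the paper leaves implicit).
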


\begin{acknowledgements}
The authors would like to thank Robert D.\ Gray (University of East Anglia) and Nik Ru\v{s}kuc (University of
St Andrews) for some useful discussions, and John Fountain (University of York) for suggesting the problem that led to this work.
\end{acknowledgements}

% REFERENCES

\end{document}